\documentclass[11pt,leqno]{amsart}
%--------------------------------------------------------------------
% In order to compile the nomenclature execute
%  makeindex new.nlo -s nomencl.ist -o new.nls
% in the command line
%--------------------------------------------------------------------

\usepackage[refpage]{nomencl}
%\makenomenclature
\usepackage{amssymb}
\usepackage{amsmath}
\usepackage{amsxtra}
\usepackage{amscd}
\usepackage{mathtools}
\usepackage{float}
\usepackage{graphicx}
\usepackage{amsfonts}
\usepackage{pb-diagram}
\usepackage[utf8]{inputenc}
\usepackage[T1]{fontenc}
\usepackage{enumerate}
\usepackage{array,multirow}
\usepackage{color}
\usepackage[all]{xy}
\usepackage{cases}
\usepackage{empheq}
\usepackage{changes}
\definechangesauthor[color=orange,name={Aristides Kontogeorgis}]{AK}
\usepackage{amsbsy}
\usepackage{soul}
\usepackage{cancel}
\usepackage{calc}
\usepackage{mathdots}
\usepackage{comment}
\usepackage[linenumbers]{MagmaTeX}
\newsavebox\CBox
\newcommand\hcancel[2][0.5pt]{%
  \ifmmode\sbox\CBox{$#2$}\else\sbox\CBox{#2}\fi%
  \makebox[0pt][l]{\usebox\CBox}%
  \rule[0.5\ht\CBox-#1/2]{\wd\CBox}{#1}}

\textheight 210mm
\textwidth 140mm

\setlength{\topmargin}{0cm}
\setlength{\textheight}{22cm}
\setlength{\textwidth}{16cm}
\setlength{\oddsidemargin}{-0.1cm}
\setlength{\evensidemargin}{-0.1cm}

\newtheorem{theorem}{Theorem}

\newtheorem{corollary}[theorem]{Corollary}
\newtheorem{lemma}[theorem]{Lemma}
\newtheorem{proposition}[theorem]{Proposition}
\theoremstyle{definition}
\newtheorem{definition}[theorem]{Definition}
\newtheorem{example}[theorem]{Example}
\theoremstyle{remark}

\newtheorem{remark}[theorem]{Remark}

\magmanames(depthFirstSearch,nodeVisitor,strongComponents,acyclicQuotient,transitiveQuotient)

\numberwithin{equation}{section}

\makeatletter
\let\@wraptoccontribs\wraptoccontribs

\renewcommand*\env@matrix[1][*\c@MaxMatrixCols c]{%
  \hskip -\arraycolsep
  \let\@ifnextchar\new@ifnextchar
  \array{#1}}
\makeatother

\begin{document}

\title{Biduality and Reflexivity in Positive Characteristic }

\author{A. Kontogeorgis}
\address{Department of Mathematics, National and Kapodistrian University of Athens\\
Panepistimioupolis, 15784 Athens, Greece
}
\email{kontogar@math.uoa.gr}

\author{G. Petroulakis}
% \address{Department of Mathematics and Statistics,
% Cleveland state University \\
% 2121 Euclid Avenue, RT 1515
% Cleveland,
% OH
% 44115-2214}
\email{georgios.petroulakis.1@city.ac.uk}

\begin{abstract}
  The biduality and reflexivity theorems are known to hold for projective varieties defined over fields of characteristic zero, and to fail in positive characteristic. 
  %The first theorem  states that the dual of a projective variety’s dual is the variety itself, and the second that the conormal varieties of a projective variety and its dual are the same. 
  % Usually, two approaches are used in the characteristic zero case for the proof of these theorems,  the implicit-inverse function theorem approach, and the Lagrangian varieties approach. 
  In this article we construct  a notion of reflexivity and biduality in positive characteristic by generalizing the ordinary tangent space to the notion of $h$-tangent spaces. The ordinary reflexivity theory can be recovered as the special case $h=0$,  of our theory.
  Several varieties that are not ordinary reflexive or bidual  become  reflexive in our extended theory. 
   % positive characteristic as well and when the respective power of converges to 1, this theory yields the previous known theories in characteristic zero case. By generalizing the first approach we get biduality and reflexivity for hypersurfaces, and when the variety is not a surface, we use a generalization of the Lagrangian variety.
\end{abstract}

\keywords{Duality, quantum Lagrangian manifolds, finite characteristic}

\subjclass[2010]{14N05,14G17,11G15}

\date{\today}
\maketitle

% \setcounter{tocdepth}{3}
% \tableofcontents

%\printnomenclature

\section{Introduction}
% {\color{blue} \bf  Ring variables with lower letters $x,y$, constant values $a,b,c$ }

The aim of this article is to study the biduality theorem and the stronger notion of reflexivity of varieties in positive characteristic. If $k$ is an algebraically closed field of characteristic $p\geq 0$, it is a very old observation that points in the projective space $\mathbb{P}_k^n$ correspond to hyperplanes in the dual projective space and vice versa. This notion of duality can be generalized to closed irreducible varieties $M\subset\mathbb{P}_k^n$ and gives rise to a dual variety $M^*$ in the dual projective space. %\bigskip

The biduality theorem (known to hold over fields of characteristic 0) asserts that $\left(M^*\right)^* = M$. One of the proofs of this fact, \cite[p.29]{gelfand2008discriminants} uses the notion of the conormal bundle, the natural symplectic structure on the cotangent bundle of a manifold.  Wallace, \cite{wal}, was the first to consider the theory of projective duality over fields of positive characteristic. For a nice introduction to projective duality independent of the characteristic of the base field we refer to \cite{Kl:86}.
% Here one can also consider the notion of reflexivity. 
% A variety is called reflexive if $M$ and $M^*$ have the same conormal varieties. 
% It is known that reflexivity implies biduality, but there are examples known of bidual varieties that are not reflexive.

Let  $M \in \mathbb{P}^n_k$
\nomenclature{$M$}{a projective variety}
be a projective variety and denote by $M_{\mathrm{sm}}$ the set of smooth points of $M$.
% \todo{which smooth points the ordinary or the generalized?}
The classical conormal variety $\mathrm{Con}(M)$ is defined by
\nomenclature{$C(M)$}{The conormal variety of $M$}
\[
\mathrm{Con}(M):=\overline{
\{ (P,H)
 \in M_{\mathrm{sm}} \times \mathbb{P}_k^{n*}: T_PM \subset H \}
} \subset M \times \mathbb{P}_k^{n*} \subset
\mathbb{P}_k^{n} \times \mathbb{P}_k^{n*},
\]
i.e., the Zariski closure of the algebraic set consisted of pairs $(P,H)$, $P\in M_{\mathrm{sm}}$,  $H\in \mathbb{P}^{*n}$ such that  $T_PM \subset H$.

Let $\pi_2$ be the second projection $\mathrm{Con}(M) \rightarrow \pi_2(\mathrm{Con}(M)):=M^*  \subset \mathbb{P}^{*n}$,
\nomenclature{$M^*$}{The projective dual of $M$}
which will be called the conormal map.
It is known that $M^*$ is an algebraic variety of $\mathbb{P}^{*n}$. If $\mathrm{Con}(M)= \mathrm{Con}(M^*)$, then $M$ is called reflexive. Equivalently, in terms of isomorphisms, $M$ is reflexive if the natural isomorphism from $\mathbb{P}_k^{n}$ to $\left(\mathbb{P}_k^{n^*}\right)^*$ induces the isomorphism 
\[
\xymatrix@R-0.5pc{
	\mathbb{P}_k^{n^*} \times \left(\mathbb{P}_k^{n^*}\right)^* 
	\ar@{=}[r]
	&
   \mathbb{P}_k^{n^*} \times \mathbb{P}_k^{n}
\\
\mathrm{Con}(M^*) \ar@{^{(}->}[u] \ar@{<->}[r]^-{\;\;\cong}
&
\mathrm{Con}(M) \ar@{^{(}->}[u] 
}
\]  
It is known that reflexivity implies biduality, but there are examples known of bidual varieties that are not reflexive.
Reflexivity also holds for all projective varieties in characteristic zero, while in characteristic $p> 0$, reflexivity can fail, see the Fermat-curve example in \cite{wal}. In positive characteristic there is the following criterion for reflexivity, whose proof may be found in \cite{hefklei}.

\begin{theorem}[Monge-Segre-Wallace]
A projective variety $M$ is reflexive  if and only if the conormal map $\pi_2$ is separable.
\end{theorem}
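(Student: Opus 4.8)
The plan is to recast reflexivity as an equality of two irreducible $(n-1)$-dimensional subvarieties of the incidence variety $I=\{(P,H):P\in H\}\subset \mathbb{P}_k^n\times\mathbb{P}_k^{n*}$, and then to detect that equality infinitesimally through the differential of $\pi_2$. First I would record that $\mathrm{Con}(M)$ and, after the canonical swap of the two factors (using $\left(\mathbb{P}_k^{n*}\right)^*=\mathbb{P}_k^n$), the variety $\mathrm{Con}(M^*)$ are both irreducible of dimension $n-1$ and both project onto $M^*$ in the first factor. Hence, by irreducibility and equidimensionality, reflexivity $\mathrm{Con}(M)=\mathrm{Con}(M^*)$ is equivalent to the single inclusion $\mathrm{Con}(M)\subseteq\mathrm{Con}(M^*)$, which in turn can be tested at a general pair $(P,H)\in\mathrm{Con}(M)$ by checking whether it satisfies the defining conormal condition $T_HM^*\subseteq \check P$ of $\mathrm{Con}(M^*)$ (here $\check P$ denotes the hyperplane in $\mathbb{P}_k^{n*}$ dual to $P$).

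The key structural input, valid in every characteristic, is the pair of first-order conditions satisfied by $\mathrm{Con}(M)$ inside $I$. Writing the incidence pairing as $\langle P,H\rangle=0$, which holds identically on $\mathrm{Con}(M)$, differentiation yields $\langle\dot P,H\rangle+\langle P,\dot H\rangle=0$ for every $(\dot P,\dot H)\in T_{(P,H)}\mathrm{Con}(M)$. On the other hand the defining conormality of $\mathrm{Con}(M)$, namely the vanishing of the tautological one-form $\theta=\langle H,dP\rangle$ along the conormal variety (a scheme-theoretic fact independent of $p$), gives $\langle\dot P,H\rangle=0$ separately; subtracting, $\langle P,\dot H\rangle=0$ for all tangent vectors. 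This second identity says precisely that $d\pi_2\big(T_{(P,H)}\mathrm{Con}(M)\big)\subseteq \check P$: the image of the tangent map of the conormal map always lies in the hyperplane dual to $P$, independently of the characteristic.

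Next I would bring in separability. By definition $\pi_2\colon\mathrm{Con}(M)\to M^*$ is separable if and only if its differential is surjective at a general point, i.e. $d\pi_2\big(T_{(P,H)}\mathrm{Con}(M)\big)=T_HM^*$ for general $(P,H)$; when $\mathrm{char}\,k=0$ this holds automatically by generic smoothness, which recovers the classical universal reflexivity. Combining this with the previous paragraph gives the easy implication: if $\pi_2$ is separable, then $T_HM^*=d\pi_2(T_{(P,H)}\mathrm{Con}(M))\subseteq\check P$, so the general point $(P,H)\in\mathrm{Con}(M)$ satisfies $T_HM^*\subseteq\check P$ and therefore lies in $\mathrm{Con}(M^*)$. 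Thus $\mathrm{Con}(M)\subseteq\mathrm{Con}(M^*)$, and the irreducibility and equidimensionality from the first paragraph upgrade this to equality, i.e. $M$ is reflexive.

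The converse, that inseparability of $\pi_2$ forces non-reflexivity, is the step I expect to be the main obstacle. When $\pi_2$ is inseparable the differential drops rank at the general point, so $d\pi_2(T_{(P,H)}\mathrm{Con}(M))$ is a \emph{proper} subspace of $T_HM^*$, and a pure dimension count no longer suffices: one must show that the extra tangent directions of $M^*$ genuinely escape $\check P$, so that $T_HM^*\not\subseteq\check P$ and the general pair $(P,H)$ fails the conormal condition, whence $\mathrm{Con}(M)\neq\mathrm{Con}(M^*)$. I would treat this by a local computation at a general smooth point: choose a separating transcendence basis and uniformizing parameters on $\mathrm{Con}(M)$ adapted to $\pi_2$, express the Gauss/conormal map in these coordinates, and exhibit an inseparable direction producing a tangent vector of $M^*$ at $H$ that pairs nontrivially with $P$. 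This pins the failure of the symmetry $\langle P,\dot H\rangle=0$ precisely to the inseparability of $\pi_2$ and completes the equivalence, the Monge--Segre--Wallace criterion then following from the irreducibility/equidimensionality reduction of the first paragraph.
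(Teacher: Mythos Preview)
The paper does not supply its own proof of this theorem: immediately after the statement it writes ``whose proof may be found in \cite{hefklei}'' and moves on. So there is nothing in the paper to compare your argument against; the Monge--Segre--Wallace criterion is being quoted as background, not established.

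On the merits of your sketch: the forward direction (separability of $\pi_2$ implies reflexivity) is essentially the standard argument and is fine. Your treatment of the converse, however, is more laboured than it needs to be and the final step is not really justified. The clean way to get ``reflexive $\Rightarrow$ $\pi_2$ separable'' is to observe that for \emph{any} $N$ the first projection $\pi_1:\mathrm{Con}(N)\to N$ is generically smooth, since over the smooth locus of $N$ the conormal variety is a projective bundle (the fibre over a smooth point is the linear system of hyperplanes containing the tangent space). If $M$ is reflexive then, under the swap $\mathbb{P}^n\times\mathbb{P}^{n*}\cong\mathbb{P}^{n*}\times\mathbb{P}^n$, the map $\pi_2:\mathrm{Con}(M)\to M^*$ is identified with $\pi_1:\mathrm{Con}(M^*)\to M^*$, hence is separable. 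This avoids entirely the need to ``exhibit an inseparable direction producing a tangent vector of $M^*$ that pairs nontrivially with $P$,'' which in your outline is asserted rather than proved.
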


The problems of biduality and reflexivity of a projective variety $M\subset \mathbb{P}^n$ have been addressed by several authors via the use of the Gauss map, i.e., the rational map from $M$ to the Grassmann variety
$\mathbb{G}(n,m)$, which sends a smooth point $P\in M$ to the $m$-dimensional tangent space $T_PM\in\mathbb{P}^n$ - in the case of a hypersurcace, the Gauss map is just a map $\gamma: M \rightarrow \mathbb{P}^{* n}$. As proved in \cite{kaji}, the separability of the Gauss map and the reflexivity of a variety are equivalent in the one-dimensional case, i.e. for projective curves. For higher dimensions, the authors in \cite{kleipie} showed that 
the Gauss map of a projective variety $M$ is separable if $M$ is reflexive. On the other hand, the converse of this result, i.e. whether the reflexivity of a projective variety implies the separability of the Gauss map, was answered recently negatively, since there are  specific examples (such as the Segre varieties) for which this assumption is not true. These examples and further analysis is found in \cite{fukkaji} and the references therein. 
The previous work and results are, to the best of our knowledge, the most recent with regard to the study of biduality and reflexivity and are focused on weather and when they fail or not, in positive characteristic.
\bigskip

The aim of this article is to extend the notions of biduality and reflexivity in the case of positive characteristic. We will make appropriate definitions which will make some important examples of varieties reflexive. We generalize the theory of Lagrange varieties presented in \cite[p.29]{gelfand2008discriminants} for projective varieties in the zero characteristic case, by introducing the respective $h$-cotagent bundle and $h$-Lagrangian subvarieties. 
The case of hypersurfaces is illuminating and  straightforward calculations can be made in terms of the   implicit-inverse function theorem approach of Wallace, \cite{wal}.

Reflexivity has many important applications to enumerative geometry, computations with discriminants and resultants, invariant theory, combinatorics etc. We hope that our construction will find some similar applications to positive characteristic algebraic geometry. 
\bigskip

From now on $k$ is an algebraically closed field of positive characteristic $p$ and $q=p^h$ is a power of $p$. 
 Instead of tangent hyperplanes, we will consider generalized hyperplanes, i.e. hypersurfaces of the form $V(\sum_{i=0}^n a_i x_i^{p^h})$ and  the duality will be expressed  in terms of these generalized hyperplanes.

Let $V$ be a finite dimensional vector space over $k$.
Consider $M\subset \mathbb{P}(V)$ an irreducible  projective variety and consider the cone $M' \subset V$ seen as an affine variety in $V$. 
Assume that the homogeneous ideal of $M'$ is generated by the homogeneous polynomials $F_1,\ldots, F_r$. 
Fix a natural number $h$  and 
consider the $n+1$-upple 
\[
\nabla^{(h)}F_i = \left(
\left.  D_0^{(h)} \right|_P F_i,
\left.  D_1^{(h)}  \right|_P F_i, 
\ldots, 
\left.  D_n^{(h)} \right|_P F_i
\right),
\]
where $D_i^{(h)}$ denotes the $h$-Hasse derivative which will be defined in  definition \ref{HasseDerDef}.
Each $F_i$ defines a $p^h$-linear form given by 
\[
L_i^{(h)}:=\sum_{\nu=0}^n \left( \left. D_i^{(h)} \right|_P F_i  \right) x_\nu^{p^h}.
\]
For the precise definition of $p^h$-linear forms and their space $V^{*h}$ see section \ref{duala}.

For a projective variety $M$ we will define the set $M^h_{\mathrm{sm}}$ of smooth $h$-points in definition \ref{h non singular}, which set if non-empty is dense in $M$, since we have assumed that $M$ is irreducible.  
\begin{definition} 
For a projective irreducible variety $M$ with $M^h_{\mathrm{sm}}\neq \emptyset$ we define
the $h$-tangent space $T^{(h)}_PM$ at $P$
to be  the variety defined  by the equations $L_i^{(h)}=0$.
% \todo{write something about the general notion of manifold}
The $h$-conormal space is defined as the subset of $V\times V^{*h}$ 
\begin{equation} \label{conormal}
\mathrm{Con}^{(h)}(M)=
\overline{
\left\{ (P,H): P\in M^h_{\mathrm{sm}}, H \text{ is a } p^h-\text{linear form which vanishes on } T_P^{(h)}M
\right\}
}.
\end{equation}
\end{definition}
It is evident that the $h$-conormal space can be identified to the space of
$p^h$-linear forms on the $h$-normal space $N^{(h)}_M(P)$ defined as 
\[
N^{(h)}_M(P) =V/T_P^{(h)}(M).
\]
% Also by the definition of $T_P^{(h)}M$ the fibre of the  $h$-conormal space at the point $P$ for a projective variety defined by the elements 
% $f_1,\ldots, f_r$ is the vector subspace of $V^{*h}$ spanned by $L_i^{(h)}$:
% \[
% \mathrm{Con}^{(h)}(M)=\langle L_i^{(h)}: 1\leq i \leq r\rangle_k.
% \]
If the variety is not reflexive, we might choose an appropriate $h$ so that we can have a form of reflexivity based on $\mathrm{Con}^{(h)}(M)$. How are we going to select $h$? 
If the characteristic of the base  field $k$ is zero or if the variety $M$ is reflexive  then $h=0$.     If the variety $M$ is just a hypersurface then the answer is simple: If $M$ fails to be reflexive then the second projection $C(M)\rightarrow M^*$ is a map of inseparable degree $p^h$, and in this way we obtained the required $h$. 

Even in the case of hypersurfaces one has to be careful. Projective duality depends on Euler's theorem on homogeneous polynomials, since a homogeneous polynomial can be reconstructed by the values of all first order derivatives. An appropriate generalization of Euler's theorem is known but we have to restrict ourselves to a class of polynomials which we will call $h$-homogeneous. Their precise definition is given in definition \ref{h-homogeneous}.

The main result of our work is the following 
theorem 
%proved in section \ref{sec:implicit} for the case of hypersurfaces using the method of Wallace and in section \ref{sec:LangVar} for the case of general varieties. We gave a different proof for the case of hypersurfaces since we  believe that this allows us to illuminate the usage of implicit function theorem (which is hidden in the Lagrangian approach) providing easier computations in examples. 

% By defining a suitable set of $p^h$ linear maps, which we will denote by $V^{*h}$, enables us to generalize the above conormal varieties $\mathrm{Con}(M)$ to $\mathrm{Con}^{(h)}(M)$  and their respective projective spaces, in order to prove the following theorem:

\begin{theorem}\label{lagrdual}
Consider the vector space $V^{*h}$ of $p^h$-linear forms. 
Let $M\in \mathbb{P}(V)$ be an irreducible, reduced projective variety 
generated by $h$-homogeneous elements,
 which also has a non-empty $h$-non-singular locus, as these are defined in Definition \ref{h-homogeneous} and Definition \ref{h non singular}, respectively. 
%{\color{red} (local complete intersection?)}.
Assume that we can select an $h$ so that the 
 map $\pi_2:V\times V^{*h} \supset \mathrm{Con}^{h}(M) \rightarrow \pi_2(M):=Z \subset V^{*h}$ is separable and generically smooth and also that 
 $Z$ has a non-empty set of $h$-non-singular points. Consider the map
\begin{eqnarray*}
\Psi:
V\times V^{*h} & \rightarrow &    V^{*h} \times \left(V^{*h}\right)^{*h} \\
(x,y) & \mapsto & (y,F(x)),
\end{eqnarray*}
where $F$ is the isomorphism $F:V \rightarrow \left( V^{*h}\right)^{*h}$ introduced in Theorem \ref{h-duality}.
Then 
\[
\Psi(\mathrm{Con}^{(h)}(M))=\mathrm{Con}^{h}(Z)\subset V^{*h} \times (V^{*h})^{*h}=V^{*h}\times V.
\]
\end{theorem}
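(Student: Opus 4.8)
The plan is to transport the symplectic/Lagrangian proof of biduality from \cite[p.29]{gelfand2008discriminants} onto the $h$-cotangent bundle $V\times V^{*h}$, replacing ordinary differentiation by the Hasse derivatives $D_i^{(h)}$ throughout. The central observation is that, since $k$ is perfect, the Frobenius substitution $u_\nu:=x_\nu^{p^h}$ converts every $p^h$-linear form $y=\sum_\nu a_\nu x_\nu^{p^h}$ into an ordinary linear form $\sum_\nu a_\nu u_\nu$, and converts the incidence pairing $\langle x,y\rangle=\sum_\nu a_\nu x_\nu^{p^h}$ into the ordinary bilinear pairing between the coordinates $u_\nu$ and $a_\nu$. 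The inseparable factor $p^h$ that obstructs ordinary reflexivity is thereby absorbed into the coordinate change $x\mapsto u$, and the role of the coefficients $D_\nu^{(h)}\!\mid_P f_i$ produced by the Hasse calculus is precisely to furnish the coefficients of these linearized forms. This reduces the statement to an ordinary conormal/Lagrangian computation carried out on the $(u,a)$-coordinates, together with a bookkeeping of where the Frobenius twist sits.

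First I would restrict to the $h$-nonsingular locus of $M'$, which is nonempty by hypothesis and, by irreducibility of $M$, dense; on it the forms $L_1^{(h)},\dots,L_r^{(h)}$ cut out $\Theta_P^{(h)}M$ in the expected codimension and the fibre $\langle L_i^{(h)}\rangle_k$ of $\mathrm{Con}^{(h)}(M)$ has locally constant dimension. Here the $h$-homogeneity of the generators enters through the generalized Euler relation of Definition \ref{h-homogeneous}: it yields $L_i^{(h)}(P)=\sum_\nu \bigl(D_\nu^{(h)}\!\mid_P f_i\bigr)\,P_\nu^{p^h}=0$, so each generalized hyperplane passes through $P$ and, by the scaling $L_i^{(h)}(tP)=t^{p^h}L_i^{(h)}(P)$, along the whole ray $kP$. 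This shows $\mathrm{Con}^{(h)}(M)$ is a well-defined conic subvariety of $V\times V^{*h}$ of the correct dimension, and fixes the generic point at which the subsequent differential arguments are performed.

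Next I would prove that $\mathrm{Con}^{(h)}(M)$ is $h$-Lagrangian for the natural $h$-symplectic form, namely the form which in the coordinates $(u_\nu,a_\nu)$ reads $\omega=\sum_\nu da_\nu\wedge du_\nu$. Parametrizing $\mathrm{Con}^{(h)}(M)$ over the $h$-nonsingular locus and differentiating, via the Hasse calculus, the incidence relation $\sum_\nu a_\nu x_\nu^{p^h}=0$ together with the equations $L_i^{(h)}=0$, one checks that the restriction of $\omega$ vanishes, i.e. the $h$-conormal variety is isotropic of maximal dimension. Because this computation takes place after the substitution $u_\nu=x_\nu^{p^h}$, where the $D_\nu^{(h)}$ behave as ordinary first-order derivatives in the $u$-variables, the differential of $\pi_2$ is generically surjective onto the tangent space of $Z=\pi_2(\mathrm{Con}^{(h)}(M))$; for a dominant morphism this is exactly separability, so we obtain the $h$-analogue of the Monge--Segre--Wallace criterion. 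This step is where the hypothesis of a nonempty $h$-nonsingular locus is used in full.

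Finally, biduality follows from the symmetry of the whole construction. Under the isomorphism $F:V\to(V^{*h})^{*h}$ of Theorem \ref{h-duality}, the swap $(x,y)\mapsto(y,F(x))$ identifies $V\times V^{*h}$ with $V^{*h}\times(V^{*h})^{*h}$ and carries $\omega$ to $-\omega$, hence sends $h$-Lagrangian subvarieties to $h$-Lagrangian subvarieties; since the incidence relation $\langle x,y\rangle=0$ and the defining conditions of the $h$-conormal space are manifestly symmetric in the two factors once written in the $u$- and $a$-coordinates, the image of $\mathrm{Con}^{(h)}(M)$ is precisely $\mathrm{Con}^{(h)}(Z)$. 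I expect the main obstacle to be the isotropy computation of the third step: one must verify that the Hasse-derivative identities are genuinely compatible with $\omega$ and that the generalized Euler relation supplies exactly the relations needed to force isotropy (rather than only the expected dimension count), and then that the formal full-rank statement for $d\pi_2$ really does yield separability in positive characteristic. This passage from a formal Jacobian computation to honest separability is the technical heart, since it is the one place where positive characteristic could still obstruct the argument.
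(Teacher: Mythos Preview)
Your overall architecture matches the paper's: show $\mathrm{Con}^{(h)}(M)$ is conical Lagrangian, then use the symmetry of the picture under the swap $V\times V^{*h}\to V^{*h}\times V$ to conclude it is also the $h$-conormal of its projection $Z$. The paper packages the second half as a separate characterization (Proposition~\ref{Lang-char}: a conical Lagrangian $\Lambda$ with $\pi_2$ generically smooth equals $\mathrm{Con}^{(h)}(\pi_2(\Lambda))$) and then applies it after swapping factors.

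Where you diverge is the symplectic form. You work, via the substitution $u_\nu=x_\nu^{p^h}$, with the single-term form $\sum_\nu da_\nu\wedge du_\nu$. The paper instead takes the \emph{two-term} form
\[
\omega=\sum_{j} dx_j^{(h)}\wedge d\xi_j+\sum_j dx_j\wedge d\xi_j^{(h)},
\]
and proves isotropy of each summand separately by the commutation $D_\nu^{(0)}D_j^{(h)}=D_j^{(h)}D_\nu^{(0)}$, without any Frobenius change of variables. The reason for the two-term choice is exactly your step~4: the swap $(x,\xi)\mapsto(\xi,F(x))$ sends the paper's $\omega$ to $-\omega$, but it does \emph{not} send $\sum dx_j^{(h)}\wedge d\xi_j$ to minus itself---it sends it to $-\sum dx_j\wedge d\xi_j^{(h)}$. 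So your claim ``carries $\omega$ to $-\omega$'' is not correct for the form you wrote down, and the symmetry argument as you phrased it does not close. You can repair this either by adopting the paper's symmetric $\omega$, or by arguing separately that $\mathrm{Con}^{(h)}(M)$ is isotropic for the \emph{dual} one-term form $\sum d\xi_j^{(h)}\wedge dx_j$ on $V^{*h}\times V$, which is essentially what the paper's second summand encodes.

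A smaller point: your substitution $u_\nu=x_\nu^{p^h}$ is really the ghost-variable device of Section~\ref{sec:HasseDer} (Lemma~\ref{onto-phi}), and the paper explicitly flags in Remark~\ref{remark16} that pursuing duality through that picture leads into weighted projective territory it does not develop. It is fine as a heuristic for the isotropy computation, but the $f_i$ are not polynomials in $u$ alone (they are $h$-homogeneous, i.e.\ of the form $\sum P_j(x^q)Q_j(x)$), so ``the $D_\nu^{(h)}$ behave as ordinary first-order derivatives in the $u$-variables'' is only literally true on the $P_j$-factor; the actual vanishing comes from the derivative commutation, as in the paper. Finally, you are right that the passage from a formal Jacobian surjectivity to genuine separability of $\pi_2$ is the delicate point; the paper states generic smoothness as a hypothesis in Proposition~\ref{Lang-char} and asserts it in the theorem, so your caution there is well placed.
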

Notice also  that in contrast to ordinary situation where the set of non-singular points forms a dense open subset, for $h>0$ the set of $h$-non-singular points can be empty. The existence of a non-empty set of $h$-non-singular points is essential for the definition of the conormal space and has to be assumed.   

% In the case of hypersurface, we will prove that,

% \begin{theorem} \label{dual-hyper}
% Let $M$ be a hypersurface and
% consider $\mathrm{Con}^{(h)}(M) \subset \mathbb{P}(V)\times \mathbb{P}(V^{*h})$ and $\mathrm{Con}^{(h)}(Z) \subset \mathbb{P}(V^{*h}) \times \mathbb{P}\left( \left(V^{*h} \right)^{*h} \right)
% =\mathbb{P}(V^{*h})\times \mathbb{P}(V)$. Then the map 
% \begin{eqnarray*}
% V\times V^{*h} & \rightarrow &    V^{*h} \times \left(V^{*h}\right)^{*h} \\
% (x,y) & \mapsto & (y,F(x))
% \end{eqnarray*}
% induces an isomorphism from $\mathrm{Con}^{(h)}(M)$ to $\mathrm{Con}^{(h)}(Z)$, where $F$ is the isomorphism $F:V \rightarrow \left( V^{*h}\right)^{*h}$ introduced in Theorem \ref{h-duality}.
% \end{theorem}

% {\color{red} Emphasise the generalized tangent spacesThese two theorems, are the main results of our theory.}

Notice that the explicit construction of the dual variety involves a projection map which can be computed using  elimination theory, see  \cite[ex. 14.8 p. 315]{Eisenbud:95}. The algebraic set $M \subset \mathbb{P}^n$, gives rise to the conormal scheme $\mathrm{Con^{(h)}(M)} \subset \mathbb{P}^n\times \left(\mathbb{P}^{n}\right)^{h*}$. If $k[\xi_0,\ldots,\xi_n]$ is the polynomial ring corresponding to the dual projective space and 
\[
I=\langle f_1,\ldots, f_r\rangle \lhd k[x_0,\ldots,x_n],
\] the ideal corresponding to $M$, then the ideal $I'\lhd k[x_0,\ldots,x_n, \xi_0,\ldots,\xi_n]$ corresponding to the conormal scheme is generated by $I\cdot k[x_0,\ldots,x_n,\xi_0,\ldots,\xi_n]$ and the equations  
\[
\sum_{i=0}^r  \sum_{j=0}^n \lambda_i  D_{x_i}^{(h)} f_i \cdot \xi_i^{p^h}=0, \;\; \lambda_i \in k.
\] 
The dual variety can be computed by  eliminating the variables $x_0,\ldots,x_n, \lambda_1,\ldots,\lambda_r$ and by obtaining a homogeneous ideal in $k[\xi_0,\ldots,\xi_n]$. Notice that there are powerful algorithms for performing elimination using the theory of Gr\"obner bases, see example \ref{FermatChar0}.
\bigskip

% {The new approach that this article suggests, regarding biduality and reflexivity, in order to present the theory, the proofs of the above theorems and practical calculations of bidual varieties, is exactly this connection of the second projection $\pi_2$ and elimination theory. This will be more evident in the next sections, especially with the presentation of several examples, as the previous one, in characteristic $p>0$.

\begin{comment}

\todo{5 Aug. 2017 Rewrite introduction to reflect our new approach.}
By introducing a new ``extended'' ring of the form $R:=k[x_0,..., x_r, x_0^{(1)},...,x_r^{(1)},...,x_0^{(h)},...,x_r^{(h)}]$, we resolve the issue of the vanishing derivatives, but other problems may arise, such as the non-homogeneity of the polynomials in the corresponding dual projective space. So, we will develop a new theory regarding tangency and duality with respect to these ``extra variables'', that deals with such issues successfully, and for which reflexivity will hold in positive characteristic. In particular, we will show the following theorem:
\end{comment}

The structure of the article is as follows:
In Section \ref{sec:HasseDer} we define and describe a number of important tools, notions and results, we are going to use through out the paper. First we start with the family of Hasse derivatives, which will be seen as derivatives with respect to some new ghost variables $x_i^{(q^h)}$. 
% variables we want to eliminate. 
These derivatives were first introduced by Hasse and Schmidt \cite{Hasse1937},\cite{Schmidt39} in order to study Weierstrass points in positive characteristic. Afterwards,  we define the so-called $p^h$-linear forms and their respective space.  In the same section we define the $q$-symplectic form we are going to use in the last section, in order to create a suitable Lagrangian variety for our work. In the same section we generalize the Euler identity for homogeneous polynomials and obtain the $h$-homogeneous polynomial definition. 
In Section \ref{sec:implicit} we present the implicit-inverse function theorem approach of our theory, we make connections with elimination theory, and treat the hypersurface case. In the last section, we generalize all the respective notions met in Lagrangian manifold theory for biduality in characteristic zero, \cite[p.29]{gelfand2008discriminants} and using them we prove Theorem \ref{lagrdual}.

\section{Tools and basic Constructions} The main idea behind our approach, assuming that $k$ has characteristic $p>0$, is to set the quantity $x_i^{p^h}$  as a new variable $x_i^{(h)}$, for $h=0,1,2,...$. As it is well known, the classical partial derivatives $D_{x_i}$ on the polynomial ring $k[x_0,\ldots,x_r]$ are zero on the polynomials of the form $f(x_0^p,\ldots,x_r^p)$, and this is the reason biduality and reflexivity fail in positive characteristic. The theory of Hasse derivatives will help us deal with this.
\subsection{Hasse Derivatives}
\label{sec:HasseDer}
% The following definition and results in this subsection are well-known in elementary number theory, analysis in positive characteristic and other fields.
\begin{definition} \label{HasseDerDef}
A Hasse family of differential operators on a commutative unital $k$-algebra $A$, is a family $D_{\underline{n}}, \underline{n}\in \mathbb{N}^{r+1}$, of $k$-vector space endomorphisms of $A$ satisfying the conditions:
\begin{enumerate}
  \item $D_{\underline{0}}=\mathrm{Id}$
  \item $D_{\underline{n}}(c)=0$, for all  $c\in k$  and $  \underline{n}\neq \underline{0}$.
  \item $D_{\underline{n}} \circ D_{\underline{m}}=\binom{\underline{n}+\underline{m}}{\underline{n}}D_{\underline{n}+\underline{m}}$
  \item $D_{\underline{n}}(a\cdot b)=\sum_{\underline{i}+\underline{j}=\underline{n}}D_{\underline{i}}a \cdot D_{\underline{j}}b$,
\end{enumerate}
where for $\underline{n}=(n_0,\ldots,n_r),\underline{m}=(m_0,\ldots,m_r)\in \mathbb{N}^{r+1}$
\[
\binom{\underline{n}}{\underline{m}}=\binom{n_0}{m_0}\cdots\binom{n_r}{m_r}.
\]
\end{definition}
An example of a Hasse family is given as follows:
For $A=k[\underline{x}]=k[x_0,\ldots,x_r]$, and $\underline{x}^{\underline{m}}=x_0^{m_0}\cdots x_r^{m_r}$ we define
\[
D_{\underline{n}} \underline{x}^{\underline{m}}=
\binom{\underline{m}}{\underline{n}} \underline{x}^{\underline{m}-\underline{n}}.
\]
Let us denote by $D_i=D_{\underline{n}_i}$ for $\underline{n}_i=(0,\ldots,0,1,0\ldots,0)$, i.e. there is an $1$ in the $i$-th position. For general $\underline{n}$ we  can recover $D_{\underline{n}}$ by
$
D_{\underline{n}}=D_0^{n_0}\circ \cdots \circ D_r^{n_r},
$
where $D_i^{n_i}$ denotes the composition of $D_i$ $n_i$ times. One can prove (see
\cite{hefez89}) that for $n=\sum_{j=0}^s n_j p^j$ with $0\leq n_j<p$ for all $j=0,\ldots,s$ we have
\begin{equation} \label{for-dual-e}
D_i^n=\frac{1}{n_0!\cdots n_s!} \left(D_i^{p^s}\right)^{n_s} \cdots (D_i^p)^{n_1} (D_i^1)^{n_0},
\end{equation}
therefore for each $i$, the family $(D_i^n)$, $n\in \mathbb{N}$ is determined by the operators $D_i^1, D_i^p,D_i^{p^2},\ldots$

\begin{definition} \label{def5}
We will denote by $D_{x_i}^{(h)}$ the operator $D_i^{p^h}$.
\end{definition}

If $D_i^ja=0$ for some $a$ and $j\in \mathbb{N}$, then $D_i^m=0$ for all $m \geq_p j$. In particular if $D_i^{p^{\mu}}=0$, then $D_i^{p^\mu+1}(a)=\cdots =D_i^{p^{\mu+1}-1}(a)=0$.

The following result, \cite{hefez89}, will be used several times during derivation processes in the next sections.
\begin{lemma} \label{frob-rep}
Let $x$, $t$ we indeterminates and $q=p^h$. If $f(t)\in k[t]$, then
\[
D_x^n f(x^q)=
\begin{cases}
D_t^{n/q}(f)(x^q) & \mbox{ if } q \mid n \\
0 & \mbox{ if } q \nmid n
\end{cases},
\]
where $D_x^n$ (resp. $D_t^n$) are the Hasse derivatives defined on $k[x]$ (resp. $k[t]$). 
\end{lemma}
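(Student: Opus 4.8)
The plan is to exploit the translation (Taylor-expansion) characterization of the single-variable Hasse derivative together with the Frobenius identity in characteristic $p$. Concretely, I introduce a fresh indeterminate $s$ and recall that the defining rule $D_x^n(x^m)=\binom{m}{n}x^{m-n}$ is equivalent to the formal identity $g(x+s)=\sum_{n\ge 0}(D_x^n g)(x)\, s^n$ in $k[x,s]$ for every $g\in k[x]$; this follows immediately from the binomial expansion of $(x+s)^m$ and linearity. Applying this to $g(x)=f(x^q)$ reduces the whole statement to understanding which powers of $s$ occur in $f\bigl((x+s)^q\bigr)$.

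First I would compute $f\bigl((x+s)^q\bigr)$ using the freshman's dream: since $q=p^h$ and $\mathrm{char}\,k=p$, we have $(x+s)^q=x^q+s^q$, hence $f\bigl((x+s)^q\bigr)=f(x^q+s^q)$. Next, expanding $f$ about $x^q$ by the same Taylor identity, now applied in the variable $t$ with increment $s^q$, gives $f(x^q+s^q)=\sum_{m\ge 0}(D_t^m f)(x^q)\,(s^q)^m=\sum_{m\ge 0}(D_t^m f)(x^q)\, s^{qm}$. Thus the only powers of $s$ that appear are multiples of $q$.

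Finally I would match the two expansions of $g(x+s)$. Comparing the coefficient of $s^n$ in $\sum_{n}(D_x^n g)(x)\,s^n$ with that in $\sum_{m}(D_t^m f)(x^q)\,s^{qm}$ yields exactly the two asserted cases: when $q\nmid n$ there is no matching term on the right, so $D_x^n f(x^q)=0$, and when $n=qm$ one reads off $D_x^n f(x^q)=(D_t^{n/q}f)(x^q)$.

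The only point requiring care—rather than a genuine obstacle—is the justification of the Taylor identity as a formal identity in $k[x,s]$ and the observation that it may legitimately be applied in the second variable with the non-linear increment $s^q$; both are immediate once one notes that $s^q$ is itself a single power of an indeterminate. An alternative, purely computational route avoids $s$ entirely: writing $f=\sum_m a_m t^m$ and using Lucas' theorem to evaluate $\binom{mq}{n}\bmod p$ via base-$p$ digits, one sees that the lowest $h$ digits of $n$ force $\binom{mq}{n}\equiv 0$ unless $p^h\mid n$, while for $n=qm'$ one gets $\binom{mq}{n}\equiv\binom{m}{m'}$; this reproduces both cases directly but is less transparent than the generating-function argument.
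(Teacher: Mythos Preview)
Your argument is correct: the generating-function identity $g(x+s)=\sum_{n\ge 0}(D_x^n g)(x)\,s^n$ together with $(x+s)^q=x^q+s^q$ in characteristic $p$ reduces the claim to a comparison of coefficients, and the alternative via Lucas' theorem is equally valid. The paper does not actually supply a proof of this lemma; it simply cites \cite{hefez89}, so your write-up provides a self-contained argument where the paper gives none.
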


\begin{remark}
Note that in multilinear algebra, a system of divided powers on a $k$-algebra $A$,
is a collection of functions $x \mapsto x^{(d)}$ satisfying a set of axioms given in \cite[p. 579]{Eisenbud:95}. We observe that the Hasse derivatives $D_i^n$ form a system of divided powers on the commutative ring of differential operators $k[\partial/\partial x_i]$.
\end{remark}

\subsection{Semilinear algebra}
\label{duala}

Since first order Hasse derivatives can not grasp the structure of $p$-powers, we have to generalize the notion of tangent space. 
% This is an important section, since we will give the first generalizations we mentioned earlier in the Introduction. 
% We start by considering a vector space $V$ with basis $e_0,\ldots,e_n$, and its dual basis $x_i\in V^*$ for $0\leq i \leq r$, such that $x_i(e_j)=\delta_{ij}$ for all $0\leq i,j \leq r$. 

%Then, \[
%V \ni v =\sum_{i=0}^n x_i(v) e_i.
%\]

\subsubsection{Frobenius actions and Hilbert's 90 theorem} \label{sec:FrobAct}

We will now collect some results on Galois descent for vector spaces, see also \cite[lemma 2.3.8]{GilleCentralSimpleAlgebras}.
Let $V$ be a vector spare of dimension $n+1$, with a basis $B=\{e_0,\ldots,e_{n}\}$ and let $x_0,\ldots,x_n \in V^*$ be linear independent coordinates with respect to the basis $B$, i.e.
\[
V \ni v =\sum_{i=0}^n x_i(v) e_i.
\]
Let $F_{p}:k \rightarrow k$ be the Frobenius map $x\mapsto x^p$. The Frobenius map 
$F_p$ generates $\mathrm{Gal}(\bar{\mathbb{F}}_p/\mathbb{F}_p)\cong \widehat{\mathbb{Z}}$ as a profinite group. 
Assume that the space $V$ is equipped with  an semilinerar action of the Galois group 
$\mathrm{Gal}(\bar{\mathbb{F}}_p/\mathbb{F}_p)$, i.e. $\sigma(\lambda\cdot v)=\lambda^p \sigma(v)$ for all $\lambda \in k$ and $v\in V$. This action  is expressed by 
an $n \times n$ matrix $\rho(\sigma)\in \mathrm{GL}_n(k)$ for every 
$\sigma\in \mathrm{Gal}(\bar{\mathbb{F}}_p/\mathbb{F}_p)$.
The entries $a_{ij}$ of the matrix $\rho(\sigma)$ are given by 
\[
\sigma e_i= \sum_{\nu=0}^n a_{\nu,i} e_\nu.
\] 
In this way we see that $\rho:\mathrm{Gal}(\bar{\mathbb{F}}_p/\mathbb{F}_p) \rightarrow \mathrm{GL}_n(k)$ is not a homomorphism but a cocycle, i.e. it satisfies the condition:
\[\rho(\sigma \tau)=\rho(\sigma) \rho(\tau)^\sigma.\]
Hilbert's 90 theorem \cite{NeukirchCohoNumFields} assures that there is an $(n+1)\times (n+1)$ matrix $P$
such that $\rho(\sigma)=P^{-1}P^\sigma$ and thus we can find a basis which is trivial under the action of $\mathrm{Gal}(\bar{\mathbb{F}}_p/\mathbb{F}_p)$. The action of the Frobenius map with respect to this basis 
is the action given by the Frobenius map on the coordinates, i.e.
\begin{equation} \label{Frob-coord}
\xymatrix@R-1pc{
 V \ar[r]^{F_p} & V \\
v=\sum_{\nu=0}^n \lambda_i e_i \ar[r] \ar[d]  & F_p(v)=\sum_{\nu=0}^n 
\lambda_i^p e_i  \ar[d]\\
(\lambda_1,\ldots,\lambda_n) \ar[r] & (\lambda_1^p,\ldots,\lambda_n^p) \\
\bar{\mathbb{F}}^n \ar[r] & \bar{\mathbb{F}}^n.
}
\end{equation}
From now on we will use Frobenius invariant bases $e_0,\ldots,e_n$.
The polynomial ring $k[x_0,\ldots,x_n]$ is  naturally attached to the vector space $V$ since $\mathrm{Sym}(V^*)=k[x_0,\ldots,x_n]$.

\begin{remark}
For an element $v\in V$ we will denote by $v^{p^i}$ the element $F_p^{i}(v)$ for $i\in \mathbb{Z}$. Since we use Frobenius invariant bases we can work with coordinates as we did in eq. (\ref{Frob-coord}). 
\end{remark}

\begin{definition} \label{h-hyperplane}
An $h$-hyperplane $H$ is the algebraic set given by an equation of the form:
\[
\sum_{i=0}^n a_i x_i^{p^h}=0, a_i\in k.
\]
\end{definition}

Such a hyperplane defines a $p^h$-linear map:
\begin{eqnarray*}
\phi: V & \rightarrow & k \\
\sum_{i=0}^{n} x_i(v) e_i=v & \mapsto &  \phi(v)=\sum_{i=0}^n a_i x_i(v)^{p^h}.
\end{eqnarray*}
The set of $p^h$ linear maps denoted by $V^{*h}$ consists of functions $\phi: V \rightarrow k$, such that 
\begin{enumerate}
\item $\phi(v_1+v_2)=\phi(v_1)+ \phi(v_2)$ for all $v_1,v_2 \in V$
\item $\phi(\lambda v)=\lambda^{p^h} \phi(\lambda)$ for all $\lambda\in k$ and $v\in V$.
\end{enumerate}
The space $V^{*h}$ becomes naturally a $k$-vector space, with basis the set $\{x_i^{p^h}: 0\leq i \leq n\}$.
\begin{theorem}
\label{h-duality}
The space $\left(V^{*h}\right)^{*h}$ is canonically isomorphic to the initial space $V$.  
\end{theorem}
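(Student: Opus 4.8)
The plan is to produce an explicit canonical pairing between $V$ and $V^{*h}$ and to show that one further $p^h$-dualization returns $V$. Fix a Frobenius-invariant basis $e_0,\dots,e_n$ of $V$ as arranged above, with dual coordinates $x_0,\dots,x_n$, so that by the remark preceding the statement $V^{*h}$ is the $(n+1)$-dimensional $k$-vector space with basis $x_0^{p^h},\dots,x_n^{p^h}$, where $x_i^{p^h}$ denotes the $p^h$-linear form $v\mapsto x_i(v)^{p^h}$. Writing $\xi_0,\dots,\xi_n$ for the coordinate functions on $V^{*h}$ dual to this basis (so $\xi_i(\phi)=a_i$ when $\phi=\sum_i a_i x_i^{p^h}$), the same construction applied to $V^{*h}$ shows that $(V^{*h})^{*h}$ is again $(n+1)$-dimensional, with basis $\xi_0^{p^h},\dots,\xi_n^{p^h}$.

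First I would define the candidate map coordinate-freely by evaluation, $F\colon V\to (V^{*h})^{*h}$, $F(v)(\phi)=\phi(v)^{p^h}$. Raising to the $p^h$ power is exactly what makes $F(v)$ genuinely $p^h$-linear in $\phi$: additivity is the identity $(\phi+\psi)(v)^{p^h}=\phi(v)^{p^h}+\psi(v)^{p^h}$ in characteristic $p$, while $F(v)(\lambda\phi)=(\lambda\,\phi(v))^{p^h}=\lambda^{p^h}F(v)(\phi)$ gives the required homogeneity, so indeed $F(v)\in (V^{*h})^{*h}$. Since $x_i(e_j)=\delta_{ij}$ one computes $\phi(e_j)=a_j$ and hence $F(e_j)(\phi)=a_j^{p^h}=\xi_j^{p^h}(\phi)$, that is, $F(e_j)=\xi_j^{p^h}$. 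Thus $F$ carries the basis $\{e_j\}$ onto the basis $\{\xi_j^{p^h}\}$ and is a bijection; equivalently, bijectivity follows abstractly from non-degeneracy of the evaluation pairing, because $\phi(v)=0$ for all $\phi\in V^{*h}$ already forces $v=0$ (take $\phi=x_j^{p^h}$), together with the equality of dimensions.

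The step I expect to be the main obstacle is pinning down in what sense $F$ is an \emph{isomorphism} and why it is \emph{canonical}. The map $F$ is not $k$-linear: from $F(\lambda v)(\phi)=\phi(\lambda v)^{p^h}=(\lambda^{p^h}\phi(v))^{p^h}=\lambda^{p^{2h}}F(v)(\phi)$ one sees that it is $p^{2h}$-semilinear. The clean way to record this is to note that $F$ exhibits $(V^{*h})^{*h}$ as the $2h$-fold Frobenius twist $V^{(p^{2h})}$ of $V$: intrinsically, the $p^h$-linear forms on a space $W$ are the ordinary linear forms on the twist $W^{(p^h)}$, and applying this twice, together with the commutation of Frobenius twist and ordinary dual for finite-dimensional spaces, yields $(V^{*h})^{*h}\cong V^{(p^{2h})}$. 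Because $k$ is perfect the Frobenius is bijective, so $V^{(p^{2h})}$ is identified with $V$ through the Frobenius-invariant basis fixed above, and under this identification $F$ becomes the asserted isomorphism. I would therefore devote the bulk of the argument to checking that this identification does not depend on the chosen Frobenius-invariant basis, so that the word ``canonical'' is justified; via Hilbert 90 as set up above this reduces to the cocycle relation between two such bases, which is the only genuinely nontrivial verification.
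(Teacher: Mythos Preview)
Your argument is correct and ultimately produces the same isomorphism as the paper, but the factorization is different. The paper defines the map in one stroke as
\[
F(v)(\phi)=\phi\bigl(v^{1/p^{2h}}\bigr)^{p^h},
\]
that is, it builds the inverse Frobenius $v\mapsto v^{1/p^{2h}}$ (available because $k$ is perfect and a Frobenius action on $V$ has been fixed) directly into the formula, and then verifies by two short computations that $F(v)$ is $p^h$-linear in $\phi$ and that $F$ is genuinely $k$-linear in $v$. Your evaluation map $v\mapsto[\phi\mapsto\phi(v)^{p^h}]$ is exactly this $F$ \emph{before} precomposition with $v\mapsto v^{1/p^{2h}}$, which is why it comes out $p^{2h}$-semilinear and forces you into the discussion of Frobenius twists; carrying out the untwisting via the $\mathbb{F}_p$-structure recovers the paper's map on the nose. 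What your route gains is a clean separation of the purely formal double-dual step (no hypothesis on $k$) from the one place perfectness is used; what the paper's direct definition gains is that $k$-linearity and canonicity are immediate, with no detour through twists and no residual basis-independence check to perform. Your explicit verification that $F$ carries $\{e_j\}$ to $\{\xi_j^{p^h}\}$ also makes bijectivity explicit, which the paper leaves implicit.
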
 
\begin{proof}
The element $v\in V$ is sent by the isomorphism $F$ to the space $\left(V^{*h}\right)^{*h}$ defined by:
\begin{eqnarray*}
F:V & \rightarrow & \left(V^{*h}\right)^{*h} \\
v & \mapsto & F(v),
\end{eqnarray*}
where $F(v)$ is the map defined by:
\begin{eqnarray*}
F(v): V^{*h} & \rightarrow & k \\
 \phi & \rightarrow & F(v)(\phi)=\phi\left(v^{1/p^{2h}}\right)^{p^h}.
\end{eqnarray*}
Notice that the map $v \mapsto v^{1/p^h}$ is well defined since the field $k$ is assumed to be perfect. 

Observe first that $F(v)$ is indeed a $p^h$-linear map. Indeed, 
\begin{eqnarray*}
F(v)( \lambda_1\phi_1+ \lambda_2\phi_2) &= &
\left( 
\lambda_1\phi_1\left(v^{1/p^{2h}}\right) +
\lambda_2\phi_2\left(v^{1/p^{2h}}\right)
\right)^{p^h} \\
& = & 
\lambda_1^{p^h} \phi_1\left(v^{1/p^{2h}}\right)^{p^h} +
\lambda_2^{p^h} \phi_2\left(v^{1/p^{2h}}\right)^{p^h} 
\\
& =& 
\lambda_1^{p^h} F(v)(\phi_1)+ \lambda_2^{p^h} F(v)(\phi_2).
\end{eqnarray*}
Now we prove that $F(v)$ is linear:
\begin{eqnarray*}
F(\lambda_1 v_1 + \lambda_2 v_2)(\phi)
& =& 
\phi
\left( \lambda_1^{1/p^{2h}} v_1^{1/p^{2h}} + \lambda_2^{1/p^{2h}} v_2^{1/p^{2h}}
\right)^{p^h} \\
& = & 
\lambda_1 \phi(v_1^{1/p^{2h}})^{p^h}+ \lambda_2 \phi(v_2^{1/p^{2h}})^{p^h} \\
& = & 
\left(
\lambda_1 F(v_1) +\lambda_2 F(v_2) 
\right)\phi,
\end{eqnarray*}
i.e., $
F(\lambda_1 v_1 +\lambda_2 v_2) = \lambda_1 F(v)+ \lambda_2 F(v),$
for all $\lambda_1,\lambda_2\in k$ and $v_1,v_2 \in V$.
\end{proof}
% {\color{red} {\bf 27/5/2019}
Let us work with coordinates now. Express an element 
$v\in V$ as $v=\sum_{i=0}^n x_i(v) e_i$, where $\{e_i\}_{i=0,\ldots,n}$ is a Frobenius invariant basis as expressed in section \ref{sec:FrobAct}, and let $\phi\in V^{*h}$ written in terms of a Frobenius dual basis as $\phi=\sum_{i=0}^n y_i(\phi) e_i^{*h}$. Set $q=p^h$, we  have 
\begin{equation}
\label{eq:contact}
\phi(v) =\sum_{i=0}^n x_i(v)^q y_i(v), 
\end{equation}
while we have (recall that $F(v)\in (V^{*h})^{h*}$)
\begin{equation}
\label{eq:dualcontact}
F(v)(\phi)=
\phi
\left(
\sum_{i=0}^n x_i(v)^{1/2q} e_i
\right)^q
=
\sum_{i=0}^n x_i(v) y_i(v)^q.
\end{equation}
This means that %the contact form given on the 
\begin{equation}
\label{PSI-map}
\xymatrix{
  V \times V^{*h} \ni (v,\phi)  \ar@{|->}[r]
  \ar_-{\Psi}[d] & \phi(v)=\sum_{i=0}^n x_i^q y_i \in k
  \\
   V^{*h} \times (V^{^*h})^{*h} \ni (\phi, F(v))
   \ar@{|->}[r] 
   & F(v)(\phi)=\sum_{i=0}^n x_iy_i^q.
}
\end{equation}
In our generalised point of view 
duality means that a point $[v] \in \mathbb{P}(V)$, represented by the vector $v\in V$, can be also seen as a $q$-hyperplane $[F(v)]$ on
$\mathbb{P}\left( (V^{*h})^{*h} \right)$.  
% }

%\begin{remark}
%Consider the $h$-linear form $\Phi\in V^{*h}$, which is written as 
%\[
%\Phi(v)=\sum_{i=0}^n \xi(v)x_i^q(v),
%\]
%where
%$q=p^h$.
%\[
%\phi(v)=\sum_{i=0}^n x_i^{q}(v) \xi_i(\phi), \qquad v=\sum_{i=0}^n x_i(v)e_i.
%\]
%Then $v$ can be seen as an element in $(V^*h)^{*h}$ as follows:
%\begin{eqnarray*}
%v(\phi) &= &
%\phi\left(
%\sum_{i=0}^n x_i(v)^{1/q^2} e_i
%\right)^q 
%% & =& 
%% \left(
%% \sum_{j=0}^n \xi_i(\phi) \left(
%% \sum_{i=0}^n x_i(v)^{1/q}
%% \right)
%% \right)^q \\
%% & = & 
%=
%\sum_{i=0}^n \xi_i(\phi)^q x_i(v).
%\end{eqnarray*}
%We observe that in duality the $q$-power passes from the $x$-coordinates to the $\xi$-coordinates.
%\end{remark}

\subsection{q-Symplectic forms}
\label{q-symplecti-forms}
% Let $V$ be a vector space equipped with a $\mathrm{Gal}(\bar{\mathbb{F}}_p/\mathbb{F}_p)$ action and let $B=\{e_0,\ldots,e_n\}$ be a $\mathrm{Gal}(\bar{\mathbb{F}}_p/\mathbb{F}_p)$-invariant basis. A Frobenius quadratic form is a map 
% \[
% Q: V \times V \rightarrow k
% \]
% which is $p^h$-linear in the second coordinate and linear in the second. 
%  In this case the bilinear form can be expressed in terms of coordinates by an $(n+1)\times (n+1) $
% matrix $A$ as follows:
% \[
% Q(v,w)= (\lambda_1,\ldots,\lambda_n) A (\mu_1^{p^h},\ldots,\mu_n^{p^h})^t,
% \]
% where $v=\sum_{\nu=1}^n \lambda_\nu e_\nu$, $w=\sum_{\nu=1}^n \mu_\nu e_\nu$. We also assume that $A(e_i,e_j)\in \mathbb{F}_p$.

Let $\mathbb{F}$ be a field of positive characteristic $p$ and let $q=p^h$ be a certain power of $p$. In order to define a suitable Lagrangian variety in the positive characteristic case, we need its respective symplectic form.   
\begin{definition}
% \todo{Check that this definition is compatible with the definition given by differential forms. }
A $q$-symplectic form $\Omega$ on $V$ is a function:
\[
\Omega: V \times V^{*h} \rightarrow k
\]
which is additive, i.e. for all $v_1,v_2,w_1,w_2 \in V$ we have
\[
\Omega(v_1+v_2,w_1)=\Omega(v_1,w_1)+ \Omega(v_2,w_1), \qquad 
\Omega(v_1,w_1+w_2)=\Omega(v_1,w_1)+ \Omega(v_1,w_2),
\]
such that there is 
% {\color{blue}
% and it satisfies the relations:
% \[
% \Omega(\lambda v,w)=\lambda^{p^h} \Omega(v,w), \Omega(v,\lambda w)=\lambda\Omega(v,w).
% \]
% }
a symplectic basis $\{e_1,\ldots,e_n,f_1,\ldots,f_n\}$
so that  
\[
\Omega(e_i,e_j)=0=\Omega(f_i,f_j), \Omega(e_i,f_j)=\delta_{ij}, \Omega(f_i,e_j)=-\delta_{ij}.
\]
Moreover for  arbitrary elements 
\[
v=\sum_{i=0}^n \lambda_i e_i + \sum_{j=0}^n \mu_j f_j 
\]
and 
\[
w=\sum_{i=0}^n \lambda_i' e_i + \sum_{j=0}^n \mu_j' f_j
\]
the symplectic form is computed:
% {\color{blue}
% \[
% \Omega(v,w)=\sum_{i=0}^n \left( \lambda_i^{p^h} \mu_i' - \mu_i^{p^h} \lambda_i' \right).
% \]
% }
\[
\Omega(v,w)=\sum_{i=0}^n \left( \lambda_i^{p^h} \mu_i' - \mu_i \lambda_i'^{p^h} \right).
\]
\end{definition}
% \begin{remark}
% Observe that for elements $v,w$ that are $\mathbb{F}_{q^2}$-linear combinations of the symplectic basis the following equation holds:
% \[
% \Omega(v,w)=- \Omega(w,v)^q. 
% \]
% The behaviour of Frobenius map is similar to the complex conjugation, a well studied analogy in the theory of projective unitary groups over finite fields. 
% \end{remark}
\begin{remark}
As in \cite[p. 8]{Da_Silva2001-jk}, the notions of  $p^h$-orthogonality, $p^h$-symplectic, $p^h$-isotropic and $p^h$-langrangian subvector spaces can be defined. Since these notions are not needed in this note, we will not develop their theory here.  
\end{remark}

%--------------
 \subsection{Powers of Frobenius as ghost variables}
%--------------
\label{sec:ghost}

In this section we will add extra ghost variables $x_i^{(h)}$ for $0\leq i \leq n$ and for $h=1,\ldots,\infty$. This is an idea comming from the similarities of the $p$-power Frobenius map and differential equations \cite[sec. I.1.9]{GossBook} and the ring of differential polynomials see \cite[exam. 5.2.5]{CrespoDiff}. 

\begin{lemma} \label{onto-phi}
Consider a term $\underline{x}^{\underline{i}}$, where $\underline{i}=(i_0,\ldots i_n) \in \mathbb{N}^{n+1}$, and the $p$-adic expansions of each index:
\[
i_\nu=\sum_{\mu=0}^\infty i_\nu(\mu) p^\mu, \qquad 0 \leq i_\nu(\mu)<p.
\]
Therefore, a term $\underline{x}^{\underline{i}}$ can be written as
\begin{equation} \label{padic-exp}
\underline{x}^{\underline{i}}=
\prod_{\mu_0=0}^\infty \cdots 
\prod_{\mu_n=0}^\infty
x_0^{i_0(\mu_0) p^\mu}\cdots x_n^{i_n(\mu_n) p^\mu}.
\end{equation}

Consider the ring
\begin{equation} \label{R-def}
R:=k[x_0,\ldots,x_n,x_0^{(1)},\ldots,x_n^{(1)},\ldots,x_0^{(h)},\ldots,x_n^{(h)},\ldots,]
\end{equation} 
and define the degree $\deg x_\nu^{(i)}=p^i$. 
We also define the homomorphism
\begin{eqnarray}
   \phi:R &\rightarrow& k[x_0,\ldots,x_n] \label{phi-def}\\
x_i^{(j)} & \mapsto & x_i^{p
^j} \mbox{ for all } 0\leq i \leq n, 0\leq j \leq h. \nonumber
\end{eqnarray}
The map $\phi$ is onto, and moreover
\begin{equation} \label{Hasse-diff}
\phi
\left(
D_{x_i^{(j)}}f
\right)=
D_{x_i}^{(h)} \phi(f).
\end{equation}
\nomenclature{$D_{x_i}^{(h)} \phi(f)$}{The Hasse derivative of $f$ with respect to $x_i^{p^h}$ }
\end{lemma}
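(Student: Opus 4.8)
The plan is to prove the two assertions separately, the intertwining identity \eqref{Hasse-diff} being the substantive one; throughout I write the variable index as $s$ (the $i$ of the statement) to avoid clashing with the components of the multi-index $\underline i$. For surjectivity, since $\phi$ is a homomorphism of $k$-algebras and $\phi(x_s)=\phi(x_s^{(0)})=x_s^{p^0}=x_s$, its image contains $k$ together with every generator $x_0,\dots,x_n$ of $k[x_0,\dots,x_n]$, so $\phi$ is onto. More usefully, the factorization \eqref{padic-exp} displays each monomial $\underline x^{\underline i}$ as $\phi(M_{\underline i})$, where
\[
M_{\underline i}:=\prod_{\nu=0}^{n}\prod_{\mu\ge 0}\bigl(x_\nu^{(\mu)}\bigr)^{i_\nu(\mu)}\in R,\qquad x_\nu^{(0)}:=x_\nu,
\]
is the \emph{canonical lift} in which every ghost exponent $i_\nu(\mu)$ lies in $\{0,\dots,p-1\}$. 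Recording this section of $\phi$ is precisely the purpose of the $p$-adic normalization in the statement, and the proof of \eqref{Hasse-diff} rests on it.

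For the intertwining identity I verify $\phi(D_{x_s^{(j)}}f)=D_{x_s}^{(j)}\phi(f)$ for each index $s$ and level $j$ (this is \eqref{Hasse-diff}, the order of the Hasse derivative matching the level of the ghost variable). All three operators $\phi$, $D_{x_s^{(j)}}$ and $D_{x_s}^{(j)}$ are $k$-linear, so it suffices to check the identity on the canonical lifts $M_{\underline i}$. On the left, the ordinary partial derivative with respect to the ghost variable $x_s^{(j)}$, followed by $\phi$, gives
\[
\phi\bigl(D_{x_s^{(j)}}M_{\underline i}\bigr)=i_s(j)\,\underline x^{\,\underline i-p^j\mathbf 1_s},
\]
since the surviving factors $x_s^{(i_s(j)-1)p^j}$ and $x_s^{\sum_{\mu\ne j}i_s(\mu)p^\mu}$ recombine to $x_s^{\,i_s-p^j}$. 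On the right, $\phi(M_{\underline i})=\underline x^{\underline i}$, and the monomial rule $D_{\underline n}\underline x^{\underline m}=\binom{\underline m}{\underline n}\underline x^{\underline m-\underline n}$ of Definition \ref{HasseDerDef}, applied with $\underline n=p^j\mathbf 1_s$, yields
\[
D_{x_s}^{(j)}\underline x^{\underline i}=\binom{i_s}{p^j}\,\underline x^{\,\underline i-p^j\mathbf 1_s}.
\]

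Thus the identity reduces to the single congruence $\binom{i_s}{p^j}\equiv i_s(j)\pmod p$. Because all digits satisfy $0\le i_\nu(\mu)<p$, the expression $i_s=\sum_\mu i_s(\mu)p^\mu$ is genuinely the base-$p$ expansion of $i_s$, while $p^j$ has the single nonzero base-$p$ digit $1$ in position $j$; Lucas' theorem then gives $\binom{i_s}{p^j}\equiv\binom{i_s(j)}{1}=i_s(j)\pmod p$ (equivalently, one iterates Lemma \ref{frob-rep} to strip off the Frobenius layers below level $j$). This establishes \eqref{Hasse-diff}, the degenerate case $i_s(j)=0$ being the common value $0$ on both sides.

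The step that genuinely requires the normalization — and which I expect to be the only delicate point — is the reduction to the canonical lifts $M_{\underline i}$. Because $\phi$ is \emph{not} injective, the right-hand side of \eqref{Hasse-diff} depends only on $\phi(f)$ whereas the left-hand side depends on $f$ itself, so the identity cannot hold for an arbitrary preimage. For instance $x_s^{p}=\phi\bigl((x_s^{(0)})^{p}\bigr)=\phi\bigl(x_s^{(1)}\bigr)$, yet $D_{x_s^{(1)}}(x_s^{(0)})^{p}=0$ while $D_{x_s^{(1)}}x_s^{(1)}=1$, and only the canonical lift $x_s^{(1)}$ reproduces $D_{x_s}^{(1)}x_s^{p}=1$. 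Hence the linearity reduction must be carried out through the canonical lifts supplied by \eqref{padic-exp}; this bookkeeping, rather than any of the computations above, is the heart of the argument.
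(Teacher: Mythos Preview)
Your proof is correct and follows the same route as the paper: surjectivity via the canonical $p$-adic lift $M_{\underline i}$, and the intertwining identity checked on monomials via the congruence $\binom{i_s}{p^j}\equiv i_s(j)\pmod p$ (the paper compresses this step to the building block $D_{x_i}^{(h)}(x_j^{p^\ell})=\delta_{ij}\delta_{h\ell}$ together with the Leibniz rule, which amounts to the same computation). Your final paragraph, observing that \eqref{Hasse-diff} fails on non-canonical preimages such as $(x_s^{(0)})^p$ and hence must be read as a statement about the canonical lifts $\tilde f$, is a genuine clarification that the paper leaves implicit.
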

\begin{proof}
Let $f\in k[x_0,\ldots,x_n]$. If we write every term of $f$ as in eq. (\ref{padic-exp}) and replace $x_j^{ i_j(\mu) p^{\mu}}$
by $\left( x_j^{(\mu)} \right)^{i_j}$,  we get a polynomial $\tilde{f}\in R$ such that $\phi(\tilde{f})=f$.
The relation given in eq. (\ref{Hasse-diff}) follows by the property of the Hasse derivative
\[
D_{x_i}^{(h)}(x_j^{p^\ell})=\delta_{ij} \delta_{h,\ell},
\]
and the differentiation rules.
\end{proof}

In other words, this lemma shows that if we set the quantity $x_i^{p^h}$ which appears in the related varieties, as a new variable $x_i^{(h)}$, with the use of suitable expansions, the partial derivation $D_{x_i^{(h)}}$ with respect to the variables $x_i^{(h)}$ will coincide with the Hasse derivatives $D_{x_i}^{(h)}$.

\begin{remark}
The kernel of the map $\phi$ of eq. (\ref{phi-def}) is the ideal generated by $x_i^{p^h}-x_i^{(h)}$, which is a homogeneous ideal by the definition of the degrees $\deg_\nu x^{(i)}$. Therefore, we have the following compatible diagram of vector spaces, rings and derivations:
\[
\xymatrix{
\tilde{V} \ar[d]  & R=\mathrm{Sym}(\tilde{V}^*) \ar^{\phi}[d] &  \{ D_{x_i^{(h)}} \;\; 0\leq i \leq n, 0\leq h \leq N\}   \ar@{=}[d]\\
V & k[x_0,\ldots, x_n]=\mathrm{Sym}(V^*) & \{ D_{x_i}^{(h)} \;\; 0\leq i \leq n, 0\leq h \leq N
\}
}
\]
In the above diagram we have a vector space, the natural ring of polynomial functions on it and the natural set of derivations. When taking the quotient by the ideal $\ker \phi$, the set of derivations is not altered and the derivations corresponding to the dual basis of $\tilde{V}$ survive, giving rise to Hasse derivations on the quotient.

\end{remark}

\begin{remark} \label{remark16} The definition of the ring $R$ in this subsection, could provide an alternative way to force separability and therefore reflexivity to hold, for a class of weighted projective varieties, which we may call bihomogeneous.

Consider an ideal $I$  of $k[x_0,\ldots,x_n]$ generated by elements $F_1,\ldots,F_t$. 
Instead of working with the polynomial ring $R$, of infinite Krull dimension  we restrict ourselves 
to the ring 
\[
R_N:=k[x_0,\ldots,x_n,x_0^{(1)},\ldots,x_n^{(1)},\ldots,x_0^{(N)},\ldots,x_n^{(N)},]
\]
where $N$ is big enough so that the map $\phi_h:R_h \rightarrow k[x_0,\ldots,x_n]$ is onto $I$. Essentially this means that every term of all polynomials $F_i$ is of the form $x_0^{i_0}\cdots x_n^{i_n}$ and the $p$-adic expansions of $i_j$, $0\leq j \leq n$ do not involve $p$-powers $p^{h}$ with $N<h$. For example for $p=3$ and the polynomial 
$x^{10}+x^{21}$ we have to take $N=2$ since 
\[
x^{10}+x^{21}=x^{1+3^2}+x^{3\cdot7}=x x^{3^2}+(x^3)^7=
x \phi(x^{(2)}) + \phi(x^{(1)})^7. 
\]   
Let now 
 $\tilde{I}$ be 
% {\color{blue} (resp. $\tilde{I}_h$)}
the ideal of $R_N$
% {\color{blue} (resp. $R_h$)}
defined by $\phi^{-1}(I)$, then
% {\color{blue} (resp. $\psi^{-1}_h(I)$)}
$\tilde{I}$ is generated by the polynomials $\tilde{F}_i\in R_N$
% {\color{blue} (resp. $\phi_h(\tilde{F}_i) \in R_h$)}
defined in the proof of Lemma \ref{onto-phi}.
Since, the procedure of Lemma \ref{onto-phi} replaces all powers of the form $x_i^{p^h}$ by the new coordinates $x_i^{(h)}$, which still have degree $p^h$, if $I$ is a homogeneous ideal of $k[x_0,\ldots,x_n]$, then it is generated by homogeneous elements $F_1,\ldots,F_t$ and the corresponding polynomials in new variables are still homogeneous. In other  words, if $I$ is a homogeneous ideal of  $k[x_0,\ldots,x_n]$,         then $\tilde{I}$
% {\color{blue} (resp. $\tilde{I}_h$)} 
is a homogeneous ideal of $R$.

% {  \color{red} {\bf 9 Aug 2017}
Recall that a weighted projective space is the quotient $\mathbb{P}(a_0,\ldots,a_n)=
\left(\mathbb{A}^{n+1}-\{0\}\right)/k^*$ under the equivalence relation $
(x_0,\ldots,x_n) \sim (\lambda^{a_0} x_0, \ldots, \lambda^{a_n} x_n),$
for $\lambda \in k^*$.
 
In our case, in order to form algebraic sets corresponding to ideals $\phi^{-1}(I)$, we have to consider the weighted projective spaces,
 $\mathbb{P}(1,\ldots,1,p,\ldots,p,p^2,\ldots,p^2,\ldots,p^N,\ldots,p^N)$.
In a weighted projective space linear equations of the form 
\[
\sum_{h=0}^N \sum_{i=0}^n a_{h,i} x_i^{(h)}=0,
\]
do not give rise to homogeneous ideals unless they are of the form 
\[
\sum_{i=0}^n a_{h_0,i} x_i^{(h)}=0, 
\]
and it is not entirely clear what projective duality will mean for weighted projective varieties. Of course, it is known that every weighted projective variety $M$ is isomorphic to an  ordinary projective variety $\tilde{M} \in \mathbb{P}^\ell$ for some big enough element $\ell$, \cite[th. 4.3.9]{Hosgood2016-sp}. The homogeneous ideal $\tilde{I}$ corresponding to $M$ is generated by polynomials of degree smaller than $p$, therefore it is reflexive. 

We will not pursue here the theory of duality of weighted projective varieties, but we can see something interesting for some of them; if we
%\label{biho}
consider the polynomial ring 
\[
R_{0,N}=k[x_0,\ldots,x_n,x_0^{(1)},\ldots,x_n^{(1)},\ldots,x_0^{(N)},\ldots,x_n^{(N)}],
\]
but now $\deg (x_0)^{(i)}=1$ for all $1\leq i \leq h$, the ideal 
$\phi^{-1}(I) \in R_{0,N}$, of a homogeneous ideal $I$ of $k[x_0,\ldots,x_n]$ does not need to be homogeneous in $R_{0,N}$ with this grading.
If it is homogeneous, then we can define it as bihomogeneous. For example, the hypersurface defined by the polynomial $\sum_{i=0}^n x_i^{p^h+1}$ gives rise to the ideal generated by the polynomial $
 x_0^{(h)} x_0+x_1^{(h)} x_1+\cdots+x_n^{(h)} x_n, 
$
which is bihomogeneous. On the other hand, the hypersurface defined by the homogeneous polynomial 
$
x_0^{p+1}-x_1x_2\cdots x_{p+1}
$, is not bihomogeneous, i.e., the polynomial 
$
x_0^{(1)}x_0-x_1x_2 \cdots x_{p+1}
$
is homogeneous in the graded ring $R_N$ but not in the graded ring $R_{0,N}$.
Observe now that the projective algebraic set $V(\phi^{-1}(I)) \subset \mathbb{P}^{h(n+1)}$ defined by the bihomogeneous ideal $\phi^{-1}(I) \subset R_{0,N}$ does not have a variable raised to a power of $p$, therefore it is reflexive. 
\end{remark}

\subsubsection{Example: Generalized quadratic forms}
Let $\underline{x}=(x_0,\ldots,x_n)^t$ and consider the homogeneous polynomial
\[
f_A:=\underline{x}^t A \underline{x}^q=\sum_{i,j=0}^n x_i a_{ij} x_j^q,
\]
where $A=(a_{i,j})$ is an $(n+1)\times (n+1)$ matrix, and $q=p^h$.
If $A=\mathbb{I}_{n+1}$, then $F$ is the diagonal Fermat hypersurface also called Hermitian hypersurface.
For $q=1$ the polynomial $f_A$ is just a quadratic form.

We compute that for a point $P=[a_0:\ldots:a_n] \in V(f_A)$
\[
D^{(0)}_{x_\ell} f_A(P)= \sum_{j=0}^n a_{\ell,j} a_j^{p^h}
\]
and
\[
D^{(h)}_{x_\ell} f_A(P) =\sum_{i=0}^n  a_i a_{i,\ell}.
\]

We write the coordinates of $P$ as a column vector $a=(a_0,\ldots,a_n)^t$ and
we compute both $\nabla f_A$, $\nabla^q f_A$,
\[
\nabla f_A=(D_{x_0}f_A,\ldots,D_{x_n} f_A)=
Aa^q=(A^{1/q}a)^q
\]
and
\[
\nabla^q f_A=(D^{(q)}_{x_0}f_A,\ldots,D^{(q)}_{x_n} f_A)=a^t\cdot A.
\]
The Gauss map $a \mapsto (A^{1/q} a)^q$ is inseparable.

Define $\xi=(\xi_0,\ldots,\xi_{n})^t$ and
$\xi^{(q)}=
\left(\xi_0^{(q)},\ldots,\xi_n^{(q)}
\right)^t$, given by
\[
\xi=\nabla f_A=A a^q \mbox{ and } \xi^{(q)}=
\left(
\nabla^q f_A
\right)^t=
\left(a^t\cdot A\right)^t=A^t a.
\]

%-------------------------------------
% {\bf Introducing new variables.}
%-------------------------------------
We will now introduce ghost variables in order to force reflexivity. 
Here we consider the variables $x^q=(x_0^q,\ldots,x_n^q)^t=y=(y_0,\ldots,y_n)^t$ as a set of new variables $y$ and we write the homogeneous polynomial defining the variety as
\[
F_A=x^t A y =\sum_{i,j=0}^n x_i a_{ij} y_j.
\]
The Gauss map in this case is given by:
\[
(a,b)\mapsto \nabla F_A= (A\cdot b, A^t a).
\]
If for a point $(a,b)^t \in V(F_A)$ satisfying $a^t F_A b=0$ we introduce the variables $\xi=A\cdot b,\xi_1=A^t\cdot a$,
then the point $(\xi,\xi_1)$ satisfies the equation:
\[
\xi_1^t A^{-1} \xi=0
\]
since
\[
\xi_1^t A^{-1} \xi=a^t A A^{-1} A b=a^t A b=0.
\]
Observe that the value $\xi_1^q=A^{qt}\cdot a^q$ can be explicitly expressed in terms of the variables
$\xi$ by the equation:
\[
A A^{-tq}\xi_1^q=A A^{-tq} \cdot (A^{tq})  a^q =Aa^q=\xi.
\]
Notice also that the map $\phi:(X,Y)\mapsto (A^t  Y,A X)=(\xi_1,\xi)$ and similarly the map $\psi:(\xi_1,\xi)\mapsto (A^{-1} \xi,A^{-t} \xi_1)$ and  $\psi\circ \phi=\phi\circ \psi=\mathrm{Id}$.

Let $M=V(f_A) \subset \mathbb{P}(V)$ and $\tilde{M}=V(F_A) \subset \mathbb{P}(\tilde{V})$.
The conormal variety $C(\tilde{M}) \subset \mathbb{P}(\tilde{V}) \times \mathbb{P}(\tilde{V}^*)$ is given by the pairs $(a,b; \xi,\xi_1)=(a,b; A\cdot b, A^t\cdot a)$. In order to compute the  conormal variety $C(M) \subset \mathbb{P}(V) \times \mathbb{P}(V^*)$ we pass from $\tilde{M}$ to $M$ by imposing the relation $b=a^q$ and we obtain
$(a,a^q; A\cdot a^q,A^t \cdot a)$. Observe that $\xi,\xi_1$ satisfy the equation of the dual
\[
\xi^{qt}A^{-1} \xi=0.
\]

\subsection{Variants of Euler theorem}

% {\color{red} {\bf 17/6/2019} Do we need this?}
The Euler identity for homogeneous polynomials implies that for a homogeneous polynomial $F(x_0,\ldots,x_n)\in k[x_0,\ldots,x_n]$, of degree $\deg F$ we have
\[
\sum_{i=0}^n x_i D_{x_i}^{p^0} F(x_0,\ldots,x_n)=\deg F \cdot F(x_0,\ldots,x_n).
\]
If $p\mid \deg F$, a lot of information is lost. In particular
the first order partial derivations $D_{x_i}^{p^0}F$ can be  zero. Next proposition allows us to get some information, from the higher  derivatives $D_{x_i}^{p^i}$. We need the following 

% \begin{remark}
% Consider the polynomial ring $R:=k[x_0,\ldots,x_n]$ and the image of Frobenius $k[x_0^{p^h},\ldots,x_n^{p^h}]$ for all $p^h$, $h\in \mathbb{N}$.
% Lemma \ref{frob-rep} implies that the following diagramm is commutative:
% \[
% \xymatrix{
% k[x_0,\ldots,x_n] \ar^{F_{p^h}}[r] \ar[d]^{D_{x_i}} & k[x_0^{p^h},\ldots,x_0^{p^h}]  \ar^{D^{(p^h)}_{x_i}}[d]\\
% k[x_0,\ldots,x_n] \ar^{F_{p^h}}[r] & k[x_0^{p^h},\ldots,x_0^{p^h}]
% }
% \]
% \end{remark}

% \begin{proposition}[$p$-Euler's identity]
% Let $f(x)\in k[x_0,\ldots,k_n]$ be a homogeneous polynomial. The following equation holds:
% \[
% \sum_{h=0}^\infty \sum_{i=0}^n x_i^{p^h} D_{x_i}^{(h)}f(x_0,\ldots,x_n)= \deg f \cdot f(x_0,\ldots,x_r).
% \]
% \end{proposition}
% \begin{proof}
% Consider the polynomial $\tilde{f}$ defined in the proof of lemma \ref{onto-phi}. The result follows, if we write the Euler identity for the polynomial ring $R$, and then take the image of $\phi$.
% \end{proof}

% % {\color{red} Comment why can't use this approach. Even dimension.}
% The above Euler's identity is not useful for our purpose. It involves 

\begin{proposition}
\label{q-Euler}
Let $q=p^h$ be a power of the characteristic.
Let $P_j(x_0,\ldots,x_n)$,$Q_j(x_0,\ldots,x_n)$ 
be polynomials in $k[x_0,\ldots,x_n]$, $j=1,\ldots,s$, where $P_j$ are homogeneous of degree $n=:\deg_h(f)$, and $Q_j$ have no indeterminate raised to a power bigger than or equal to a power of $q$.
 If
\begin{equation} \label{h-Euler}
f(x_0,\ldots,x_n) =\sum_{j=1}^s P_j(x_0^q,\ldots,x_n^q)Q_j(x_0,\ldots,x_n),
\end{equation}
then
\[
\sum_{i=0}^n x_i^q D_{x_i}^qf(x_0,\ldots,x_n)=\deg_h(f) \cdot f(x_0,\ldots,x_n).
\]
\end{proposition}
\begin{proof}
\cite[prop. 3.10]{hefez89}
\end{proof}
\begin{definition} \label{h-homogeneous}
We will call a polynomial $h$-homogeneous of degree $\deg_h(f)$ if it is a linear combination of polynomials given in eq. (\ref{h-Euler}) of the same degree. 
\end{definition}

\begin{remark}
A polynomial which is homogeneous and $h$-homogeneous is bihomogeneous according to  remark \ref{remark16}.
\end{remark}

% ------- begin paste 27Sep2017 {\color{red}

%----------------------------------------
\subsection{$h$-Tangent and $h$-cotangent spaces and bundles}
%----------------------------------------
\label{sec:TangentBundle}

% \todo{4 Aug. 2017. We need to keep this, in order to define the $h$-tangent space.}
In order to compare our definition of $h$-tangent space we recall here the classical definition.

Let $M$ be a projective variety defined by 
$h$-homogeneous polynomials  $F_1,\ldots,F_t$, as these were defined in Definition \ref{h-homogeneous}, generating the homogeneous ideal $I$. Let $S$ be the algebra $k[x_0,\ldots,x_n]/I$.

\begin{definition}
Let $P=[a_0:\ldots:a_n]$ be a point on $M$. The tangent space $T_P M$
of $M$ at $P$, is defined as the zero space of the differentials (we will denote by $D_{x_\nu}^{(0)}$ the classical derivative according to definition \ref{def5}). In other words,
\begin{equation} \label{system-tangent}
dF_i=\sum_{\nu=0}^n D_{x_\nu}^{(0)}F_i(P) x_\nu \mbox{ for all } 1\leq i \leq t,
\end{equation}
\[
T_PM=V( \langle dF_1,\ldots,dF_t \rangle).
\]
\nomenclature{$T_P M$}{The tangent space $T_P M$
of $X$ at $P$}
\end{definition}

\begin{definition} \label{dual}
For every  $f\in R$, define the differential form on the tangent space $T_P M$:
\begin{equation} \label{df-form}
df:=\sum_{\nu=1}^n D_{x_\nu}^{(0)}f(P) x_\nu,
\end{equation}
\nomenclature{$df$}{Differential of an element in the coordinate ring}
which gives rise to elements in the dual space $T_P^* M$,
by sending a solution $(x_0:\cdots:x_r)\in T_PM$ of system (\ref{system-tangent}) to the value $df$ given in eq. (\ref{df-form}).
\end{definition}
\nomenclature{$T_P^* M$}{Cotangent space of $M$ at $P$}
 The element $df$ is well defined, since  if $f_1-f_2 \in \langle F_1,\ldots,F_t \rangle$, then the differentials $df_1,df_2$ introduce the same linear form on $T_P M$, see  \cite[chap II. sec. 1]{ShafaBo1}.
Let $\mathcal{O}_X(P)$ be the ring of functions defined at $P$. The map 
\[
d: k[x_0,\ldots,x_n] \rightarrow (T_PM)^*
\]
defines an isomorphism of $\mathfrak{m}_P/\mathfrak{m}_P^2$ to $(T_PM)^*$, \cite[chap II. th. 2.1]{ShafaBo1}. This fact implies that the dimension of the tangent space is invariant under isomorphism, see \cite[chap II. Cor. 2.1]{ShafaBo1}.

\subsubsection{$h$-Tangent bundles}
For every $F\in R$
% \todo{Is $R$ defined, and how?}
we define the $h$-linear form:
\begin{eqnarray*}
L_F^{(h)}: V & \longrightarrow k &  \\
\sum_{\nu=0}^n x_i(v) e_i=  v  & \mapsto & \sum_{\nu=0}^n D_{x_\nu}^{(h)}F(P) x_\nu^{p^h}(v).
\end{eqnarray*}
\begin{definition} \label{tan-cotan-def}
Let $M$ be defined in terms if the homogeneous ideal
$\langle F_1,\ldots,F_t\rangle$.
For $h\geq 0$, the  $h$-tangent space $T_P^{(h)}M$ at $P \in M$
\nomenclature{$\Theta_PM$}{Complete tangent space of $M$ at $P$}
is defined by 
\[
T_P^{(h)} M = \bigcap_{i=1}^t \mathrm{ker} L_{F_i}^{(h)} \subseteq V.
\]
It is clear from the definition that $T_P^{(h)}M$ is a $k$-vector space.
\end{definition}
\begin{remark}
The notion of classical tangent space is independent of the isomorphism class of a variety. If $\Phi:M \rightarrow Y$ is a local isomorphism from a  Zariski neighbourhood $U$ of $P$ to a Zariski neighbourhood $V$ of $\Phi(P)$,  then
 $\dim_k T^{(0)}_PM=\dim_k T^{(0)}_{\Phi(P)}Y$.

This does not hold for the case of the $h$-tangent spaces, the space $T_P^{(h)}M$ depends on the embedding of $M$ in an ambient space. For example the affine space $\mathbb{A}^1=\mathrm{Spec}(k[x])$ has one dimensional tangent space $T_P^{(h)} \mathbb{A}^1$ for all $h>1$,  while its isomorphic image $\mathrm{Spec}(k[x,y]/\langle x\rangle)\subset \mathbb{A}^2$ has 2-dimensional $h$-tangent space for all $h>1$.  

Of course, in order to correct this, one can strengthen the notion of isomorphism $\Phi:X\rightarrow Y$, by requiring that $\Phi$ induces an isomorphism to $h$-tangent spaces as well.  
\end{remark}

%\end{definition}

% {\color{blue} 4 Aug 17, remove the blue if don't needed.

% In what follows we will identify the tangent space of the vector space $\tilde{V}$ by the $\mathcal{O}_M$-module of Hasse derivative operators, evaluated at a point $P$. Then, the differential forms will be the dual of the tangent space in terms of a pairing $i_X$. In this setting, the linear forms $L_F$ will be given in terms of the Hasse differential:
%  \begin{equation}
%  \label{Hasse-df}
%  % \sum_{h=0}^\infty
%    dF(P)=
%  \sum_{h=0}^\infty
%  \sum_{\nu=0}^r D_\nu^{(h)}F(P) dx_\nu^{(h)}.
%  \end{equation}
%  The differential  $dF_i$ is seen as the linear map
%  \begin{eqnarray*}
%  \tilde{V}\cong \langle D_{x_i}^{(h)} \rangle_k  & \longrightarrow &  k \\
%  \sum_{h=0}^N \sum_{i=0}^r a_{i,h} D_{x_i}^{(h)}=X  & \longmapsto & i_X(dF),
%  \end{eqnarray*}
%  {\color{red} where $i_X$ is an operator that will be defined}
%  With this notation
%  \[
%  \Theta_P M = \bigcap_{i=1}^t \mathrm{Ker}  \;d{F_i}.
%  \]
% }
\begin{remark}
As R. Vakil observes \cite[chap. 12]{VakilSea}, the quantity $\sum_{i=0}^n D_{x_i}^{(0)} F \cdot x_i$ is the linear part of a given polynomial $F\in k[x_0,\ldots,x_n]$. In a similar fashion $\sum_{i=0}^n D_{x_i}^{(h)} F \cdot x_i^{p^h}$ is the $p^h$-linear part of the polynomial $F$, that is all terms that can be written as $\left(\sum_{i=0}^n a_i x_i\right)^{p^h}$, $a_i\in k$.
\end{remark}

\begin{definition}
The $h$-cotangent space $T_P^{(*h)}M$ for $h\geq 0$
% \nomenclature{$\Theta_P^*M$}{Complete cotangent space of $M$ at $P$}
at $P$ is defined as the vector space  generated by the elements (set $q=p^h$) 
\begin{equation}
\label{diff-q}
d^{(h)}f = \sum_{\nu=0}^{n} D_{x_\nu}^{(h)}f(P) x_\nu^q
\end{equation}
for elements $f \in k[x_0,\ldots,x_n]/I(M)$. Notice that the expression $d^{(h)}f$ defined for $f$ as above gives rise to a well defined form on the tangent space. Moreover $d^{(h)} x_i^q$ is an element in $T_P^{(*h)}M$.
% seen as functions on the $h$-tangent space $T_P^{(h)}$. 
\end{definition}
\begin{remark}
Similar to  ordinary differentials, the map
given in eq. (\ref{diff-q})
is well defined, i.e., if $f_1-f_2\in I(M)$, then $d^{(h)}f_1-d^{(h)}f_2$ is the zero map on the tangent space $T^{(h)}M$. In this way the differentials in eq. (\ref{diff-q}) define functions
\[
\phi:T_P^{(h)} M \rightarrow k.
\]
% {\color{red} Consider all linear forms on $\Theta^{(h)}$. Differentials give rise to linear forms. }
\end{remark}
Let us consider the example $M=\mathrm{Spec}k[x,y]/\langle x \rangle$. In order to compute the  $h$-cotangent space $T_0^{(*h)}M$ let us compute 
$
d^{(h)}f(x,y)
$
for $f(x,y)\in k[x,y]$. The possible outcomes are all expressions of the form  $ax^q+b y^q$, $a,b\in k$. Notice that for $f \in \langle x \rangle$ we have $d^{(h)}x=0$, so the $h$-cotangent space is two dimensional.

% This means that in contrast to the $h$-tangent space the $h$-cotangent space is well defined on the ring and we will show that does not depend on the isomorphy class of the variety. 

% We can define an intrinsic notion of tangent space $\Theta_PM$ at a point $P$ by considering the space of $h$-differential operators on the set of functions defined at the point $P$. We need some preparation for this. 

Let $R$ be a finite presented $k$-algebra. 
A Frobenius map on $R$ is a ring homomorphism $\Phi:R \rightarrow \Phi(R)\subset R$, such that $\Phi(\lambda x)=\lambda^{p} \Phi(x)$, for all $\lambda\in k$ and $x\in R$.
 % equipped with a Frobenius map $\Phi:R \rightarrow \Phi(R) \subset R$.
The image $\Phi(R)$ is a subring of $R$. In this way we form a sequence of nested subrings of $R$, 
\[
R \supset \Phi(R) \supset \Phi^{2}(R)  \supset \Phi^{3}(R) \supset \cdots
\] 
If $R$ is a local ring with maximal ideal $\mathfrak{m}$ then all rings $\Phi^hR$ are 
local rings as well, with maximal ideals $\mathfrak{m}^{(h)}=\Phi^h(\mathfrak{m})$.
If $f:R_1\rightarrow R_2$ is a ring homomorphism of two rings equipped with Frobenius maps $\Phi_1,\Phi_2$ respectively then we require
\[
f( \Phi^h_1 R_1) \subset \Phi^h_2 (R_2).
\]
If moreover $f$ is a local homomorphism of local rings $R_i$ with corresponding maximal ideals $\mathfrak{m}_i$, for $i=1,2$ then $f(\mathfrak{m}_1^{(h)}) \subset \mathfrak{m}_2^{(h)}$.
In what follows we will consider the polynomial ring $k[x_0,\ldots,x_r]/I$, and the localizations at certain maximal ideals of the ring $k[x_0,\ldots,x_n]/I$.

\begin{definition}
The intrinsic $h$-cotangent space $\Theta_P^{(*h)}M$ is defined to be the space $\mathfrak{m}_P^{(h)}/\mathfrak{m}_P^{(h)^2}$ and equals to to the cotangent space of the local ring $\Phi^h(\mathcal{O}_P)$. 
\end{definition}

\begin{lemma} \label{cotmax}
The space $\Theta_P^{(*h)}M\cong \mathfrak{m}_P^{(h)}/\left(\mathfrak{m}_P^{(h)}\right)^2$ is a subspace of 
$T_P^{(*h)}M$.  
% The map $d^{(h)}:k[x_0,\ldots,x_n]\rightarrow \Theta^{(*h)}_P(M)$ defines an isomorphism
%   \[m_P^{p^h}/m_P^{p^h+1} \supset \Phi^{(h)}(m_P)/m_P^{p^h+1}  \cong \Theta_P^{(*h)}(M),\]
\end{lemma}
\begin{proof}
Consider the space of $h$-linear forms in Hasse derivatives given by differentials
$d^{(h)}f: T^{(h)}\rightarrow k$. 
% Hasse derivatives $D_\nu^{(h)}$ are zero on 
%  polynomials which have degree smaller than $p^h$,
% therefore it makes sense to consider $d^{(h)}$ restricted on $m_P^{h}$ and in particular 
% on elements of $\Phi^{(h)}(m_P)$.
Consider  the map 
\[
d^{(h)}:\frac{\mathfrak{m}_P^{(h)}}{
\left(\mathfrak{m}_P^{(h)} \right)^2}
\longrightarrow T_P^{(*h)}(M),
\]
we will prove it is injective. 
% is onto. Indeed, by considering  classes of  polynomials of the form
% $\sum_{\nu=0}^n a_\nu x_\nu^{p^h} \in \mathfrak{m}_P^{(h)}$ which have image $\sum_{\nu=0}^n a_\nu x_\nu^{p^h}$ by eq. (\ref{diff-q}), we see that the map $d^{(h)}$ onto $\Theta_P^{(*h)}$.

Let $G\in  k[x_0^{p^h},\ldots,x_n^{p^h}]$
 be a polynomial representative of an  element in $\mathfrak{m}_p^{(h)}$ so that  $d^{(h)}G$ is the zero form on $T_P^{(h)}M$.  Then $d^{(h)}G$ is a linear combination $\sum_{\nu=0}^t \lambda_\nu d^{(h)}F_\nu$ of the forms
$d^{(h)}F_\nu$ for $F_\nu$, $1\leq \nu \leq n$, generating the homogeneous ideal of $M$. Now the difference   $G-\sum_{\nu=0}^t \lambda_\nu d^{(h)}F_\nu \in
 \left(\mathfrak{m}_P^{(h)} \right)^2$, since it has not terms of the form $x_\nu^q$,  and the result follows. 
\end{proof}

\begin{remark}
By eq. (\ref{diff-q}) we have that $d^{(h)}(x_i^{p^h})$ on the tangent space acts like the $q$-form 
$x_i \mapsto x_i^q$. 
\end{remark}

% {\color{red}
% Definition $d^{(h)}(x_i^q)=x_i^q$ on the tangent space. 
% }

\begin{corollary}
The dimension of $\Theta^{(*h)}_PM$ is an invariant of the isomorphism class of a variety, i.e. if $\Phi:M \rightarrow Y$ is a local isomorphism from a  Zariski neighbourhood $U$ of $P$ to a Zariski neighbourhood $V$ of $\Phi(P)$,  then $\dim_k \Theta^{(h)}_PM=\dim_k \Theta^{(h)}_{\Phi(P)}Y$.   
\end{corollary}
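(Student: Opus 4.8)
The plan is to reduce the statement to the intrinsic description of the $h$-cotangent space provided by Lemma \ref{cotmax}, mirroring the classical argument for the ordinary tangent space cited after Definition \ref{dual}. The guiding observation is that $\dim_k \Theta_P^{(h)}M$ can be read off from data attached purely to the local ring at $P$, and such data is transported isomorphically by any local isomorphism.

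First I would pass from the tangent space to the cotangent space. Since $\Theta_P^{(h)}M$ is a $k$-subspace of the finite-dimensional space $V$, and the $h$-cotangent space $\Theta_P^{(*h)}M$ is by Definition \ref{tan-cotan-def} precisely the space of $k$-linear forms on $\Theta_P^{(h)}M$, the two are dual finite-dimensional vector spaces and hence have the same dimension,
\[
\dim_k \Theta_P^{(h)}M = \dim_k \Theta_P^{(*h)}M.
\]
It therefore suffices to show that $\dim_k \Theta_P^{(*h)}M$ is an isomorphism invariant. Next I would invoke Lemma \ref{cotmax}, which furnishes the canonical identification $\Theta_P^{(*h)}M \cong m_P^{p^h}/m_P^{p^h+1}$, where $m_P$ denotes the maximal ideal of the local ring of $M$ at $P$. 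The crucial feature of the right-hand side is that it depends only on the local ring together with its maximal-ideal filtration, and not on any chosen embedding or generators $F_1,\dots,F_t$.

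Finally I would exploit the local isomorphism. The restriction of $\Phi$ to $U$ induces a $k$-algebra isomorphism of local rings $\Phi^*$ carrying the local ring of $Y$ at $\Phi(P)$ onto the local ring of $M$ at $P$; such an isomorphism necessarily sends the maximal ideal $m_{\Phi(P)}$ onto $m_P$ and hence each power $m_{\Phi(P)}^{p^h+j}$ onto $m_P^{p^h+j}$. Passing to the relevant quotient yields a $k$-linear isomorphism
\[
m_{\Phi(P)}^{p^h}/m_{\Phi(P)}^{p^h+1} \cong m_P^{p^h}/m_P^{p^h+1},
\]
and combining this with the two identifications above gives $\dim_k \Theta_P^{(h)}M = \dim_k \Theta_{\Phi(P)}^{(h)}Y$. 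The argument is essentially formal once Lemma \ref{cotmax} is available; the only point meriting care is that the identification of the cotangent space with $m_P^{p^h}/m_P^{p^h+1}$ is canonical and that $\Phi^*$ is an isomorphism of local $k$-algebras, so that it respects the maximal-ideal filtration. I do not anticipate a genuine obstacle here, since the entire content is carried by the preceding lemma, exactly as in the classical case $h=0$.
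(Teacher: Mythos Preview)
Your argument is correct and is precisely the intended one: the paper states this as an immediate corollary of Lemma \ref{cotmax} without giving a separate proof, because the identification $\Theta_P^{(*h)}M \cong m_P^{p^h}/m_P^{p^h+1}$ reduces the claim to the transport of the maximal-ideal filtration under the local-ring isomorphism induced by $\Phi$, exactly as you wrote.
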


Let $M\subset V$ be an irreducible variety. Consider the algebraic set  $\Theta \subset V\times M$ consisting of pairs $(a,P)\in V\times M$ such that $a$ is $h$-tangent at $P$. The second projection  $\pi:\Theta \rightarrow M$  is onto and has fibres the spaces $\Theta^{(h)}_PM$. By \cite[Chap. I.63 th.7]{ShafaBo1}  we have that $\dim_k \Theta_P^{(h)}M \geq s$ for all $P\in M$ and equality is attained at a non empty open subset of $M$. 

\begin{definition} \label{h non singular}
We will say that a point $P \in M$ is $h$-non-singular if 
\[
\dim_k \Theta_P^{(h)}= \dim_k T_P^{(*h)}M=\dim X.
\]
\end{definition}
\subsubsection{Differential between tangent spaces.}
Consider the projective varieties $V\subset \mathbb{P}^n$, $W\subset \mathbb{P}^m$ defined in terms of the  homogeneous ideals $\langle f_1,\ldots,f_r \rangle \in k[x_0,\ldots,x_n]$ and $\langle g_1,\ldots,g_s
\rangle \in k[y_0,\ldots,y_m]$ respectively. A map $F:V \rightarrow W$ is given by polynomials $F_0,\ldots,F_m \in k[x_0,\ldots,x_n]$ such that $y_i=F_i(x_0,\ldots,x_n)$ for $i=0,\ldots,m$.
Set 
\[
J_{0,h}(f_1,\ldots,f_r)=
\begin{pmatrix}[ccc|ccc]
D_{x_0}^{(0)} f_1 & 
\cdots 
&
D_{x_n}^{(0)} f_1 &
D_{x_0}^{(h)} f_1 &
\cdots
&
D_{x_n}^{(h)} f_1
\\
\vdots & & \vdots & \vdots & & \vdots
\\
D_{x_0}^{(0)} f_r & 
\cdots 
&
D_{x_n}^{(0)} f_r &
D_{x_0}^{(h)} f_r &
\cdots
&
D_{x_n}^{(h)} f_r
\end{pmatrix}
=(A|A')
\]
and similarly
\[
J_{0,h}(g_1,\ldots,g_s)=
\begin{pmatrix}[ccc|ccc]
D_{y_0}^{(0)} g_1 & 
\cdots 
&
D_{y_m}^{(0)} g_1 &
D_{y_0}^{(h)} g_1 &
\cdots
&
D_{y_m}^{(h)} g_1
\\
\vdots & & \vdots & \vdots & & \vdots
\\
D_{y_0}^{(0)} g_s & 
\cdots 
&
D_{y_m}^{(0)} g_s &
D_{y_0}^{(h)} g_s &
\cdots
&
D_{y_m}^{(h)} g_s
\end{pmatrix}
=(B|B').
\]
The kernel of the matrix $A$  at $P$ (resp. $B$ at $F(P)$) corresponds to the ordinary tangent space of $V$ (resp. $W$) while the kernel of $A'$ (resp $B'$) corresponds to the $h$-tangent space.

By substitution of $y_i=F(x_0,\ldots,x_n)$ for $0\leq i \leq m$ in $g_1,\ldots,g_s$ we write each $g_1,\ldots,g_s$ as an element in the ideal $\langle f_1,\ldots,f_r \rangle$. Therefore elements in $T_P V$, resp. $T_P^{(h)}V$, given as elements in the kernel of $A$ (resp. $A'$) are sent to elements in $T_PW$, resp. $T_P^{(h)}W$.

Consider now the matrix (observe that $y_i^{p^h}=F_i(x_0,\ldots,x_n)^{p^h}$)
\begin{align*}
J_{0,h}(F_1,\ldots,F_n,
F_1^q,\ldots,F_n^q
) &=
\begin{pmatrix}[ccc|ccc]
D_{x_0}^{(0)} F_0 & 
\cdots 
&
D_{x_n}^{(0)} F_0 &
D_{x_0}^{(h)} F_0 &
\cdots
&
D_{x_n}^{(h)} F_0
\\
\vdots & & \vdots & \vdots & & \vdots
\\
D_{x_0}^{(0)} F_m & 
\cdots 
&
D_{x_n}^{(0)} F_m &
D_{x_0}^{(h)} F_m &
\cdots
&
D_{x_n}^{(h)} F_m
\\
\hline
%%%%%%%%%%%%%
D_{x_0}^{(0)} F_0^{p^h} & 
\cdots 
&
D_{x_n}^{(0)} F_0^{p^h} &
D_{x_0}^{(h)} F_0^{p^h} &
\cdots
&
D_{x_n}^{(h)} F_0^{p^h}
\\
\vdots & & \vdots & \vdots & & \vdots
\\
D_{x_0}^{(0)} F_m^{p^h} & 
\cdots 
&
D_{x_n}^{(0)} F_m^{p^h} &
D_{x_0}^{(h)} F_m^{p^h} &
\cdots
&
D_{x_n}^{(h)} F_m^{p^h}
\end{pmatrix}
\\
%%%%%%%%%%%%
%%%%%%%%%%%%
%%%%%%%%%%%%
&=
\begin{pmatrix}[ccc|ccc]
D_{x_0}^{(0)} F_0 & 
\cdots 
&
D_{x_n}^{(0)} F_0 &
D_{x_0}^{(h)} F_0
&
\cdots
&
D_{x_n}^{(h)} F_0
\\
\vdots & & \vdots & \vdots & & \vdots
\\
D_{x_0}^{(0)} F_m & 
\cdots 
&
D_{x_n}^{(0)} F_m &
D_{x_0}^{(h)} F_m &
\cdots
&
D_{x_n}^{(h)} F_m
\\
\hline
%%%%%%%%%%%%%
0 & 
\cdots 
&
0 &
D_{x_0}^{(h)} F_0^{p^h} &
\cdots
&
D_{x_n}^{(h)} F_0^{p^h}
\\
\vdots & & \vdots & \vdots & & \vdots
\\
0 & 
\cdots 
&
0 &
D_{x_0}^{(h)} F_m^{p^h} &
\cdots
&
D_{x_n}^{(h)} F_m^{p^h}
\end{pmatrix}
\\
&=
\begin{pmatrix}[c|c]
J & J' \\
\hline
0 & J^{p^h}
\end{pmatrix}.
\end{align*}
The chain rule implies
\begin{align*}
\begin{pmatrix}[ccc|ccc]
D_{x_0}^{(0)} g_1 & 
\cdots 
&
D_{x_m}^{(0)} g_1 &
D_{x_0}^{(h)} g_1 &
\cdots
&
D_{x_m}^{(h)} g_1
\\
\vdots & & \vdots & \vdots & & \vdots
\\
D_{x_0}^{(0)} g_s & 
\cdots 
&
D_{x_m}^{(0)} g_s &
D_{x_0}^{(h)} g_s &
\cdots
&
D_{x_m}^{(h)} g_s
\end{pmatrix}
&=
(B|B')\begin{pmatrix}[c|c]
J & J' \\
\hline
0 & J^{p^h}
\end{pmatrix}
\\
&=(BJ | BJ'+B'J^{p^h}).
\end{align*}
An element $\bar{a}=(a_0,\ldots,a_n)^t \in T_{F(P)}W $, $\bar{b}=(b_0,\ldots,b_n)^t \in T_{F(P)}^{(h)}W$ by definition of the tangent spaces satisfies
\[
BJ\bar{a}=0 \qquad 
(BJ'+B' J^{p^h})\bar{b}=0.
\]
On the other hand we have
\[
\begin{pmatrix}[c|c]
J & J' \\
\hline
0 & J^{p^h}
\end{pmatrix}
\begin{pmatrix}
\bar{a} \\
% \hline
\bar{b} 
\end{pmatrix}=
\begin{pmatrix}
J\bar{a}+J'\bar{b} \\
J^{p^h} \bar{b}
\end{pmatrix}
\]
therefore $B(J\bar{a}+J'\bar{b})=0$ and $B' J^{p^h} \bar{b}=0$. This allows us to write the differentials: 
\[
dF: T_P(V) \rightarrow T_{F(P)}W \text{ and }
dF^{(h)}: 
T^{(h)}_P(V) \rightarrow T^{(h)}_{F(P)}W
\]
as follows
\[
dF(\bar{a})=J\bar{a}+ J' \bar{b} \text{ and }
dF^{(h)}(\bar{b})=J^{p^h} \bar{b}.
\]
When $\bar{b}=0$ is the zero $h$-tangent vector then $dF$ is the classical map. The differential in the $h$-tangent space is independent on the choice of $\bar{a} \in T_P^{(0)}V$.

% Assume now that $J'=0$, which is equivalent that all polynomials $F_1,\ldots,F_m$ do not have terms raised to the $p^h$. In this case
% if $v$ is an $h$-tangent vector at $P$, then by definition of the $h$-tangent space the right part of the matrices in the above equations is zero, and since $J'$ is assumed zero we have that $J^{p^h}$ transfers $v$ to an element of the kernel of $B'$, that is an element in the $h$-tangent space of $W$ at $F(P)$. 

 We have proved the following:
\begin{proposition}
\label{p-power-diff}
Let $F:V \rightarrow W$ be a map between polynomial varieties, expressed in terms of polynomials $F_0,\ldots,F_s$. % so that all Hasse partial  $h$-derivatives of all polynomials are zero.
Then the $p^h$-power of the ordinary differential $T^{(0)}_PV\rightarrow T^{(0)}_{F(P)}W$ is the natural map $T^{(h)}_PV \rightarrow T^{(h)}_{F(P)}V$. 
\end{proposition}

%-----------------------------
\subsection{Vector fields and  differential forms}
%-----------------------------

We will now define vector fields as differential operators in terms of  Hasse-derivatives.  The identification 
\[
\frac{\mathfrak{m}_P}{\mathfrak{m}_P^2} \stackrel{d}{\longrightarrow} T_P^* M 
\]
proves that $dx_0,\ldots,dx_n$ give a basis of the cotangent space, since 
$\mathfrak{m}_P/\mathfrak{m}_P^2$ is generated as vector space by the classes of $x_0,\ldots,x_n$
modulo $\mathfrak{m}_P^2$. Also in the classical case the partial derivatives 
$\partial/\partial x_i$ give rise to naturally dual elements, i.e. elements in $T_P M$.

Let us assume that the variety $M$ has a non-empty open set of $h$-non-singular points. 
On this open set we will employ the identification
$
\frac{
\mathfrak{m}_P^{(h)}
}
{
\left(
\mathfrak{m}_P^{(h)}
\right)^2
} 
\stackrel{d^{(h)}}{\longrightarrow} 
T_P^{(*h)} M$ of lemma \ref{cotmax},
which sends 
\[
\frac{
\mathfrak{m}_P^{(h)}
}
{
\left(
\mathfrak{m}_P^{(h)}
\right)^2
} 
\ni m=\sum_{i=0}^n a_ix_i^{p^h}
\mapsto
d^{(h)}m = \sum_{i=0}^{p^h} a_i d^{(h)}x_i^q \in T_P^{(*h)} M.
\]

\begin{definition}
A vector field $X$ is a sum
\begin{equation} \label{vector-field-X}
X=
\sum_{h=0}^{\infty}
 \sum_{i=0}^n
a_{h,i}(X) D_{x_i}^{(h)},
\end{equation}
where all but finite coefficients $a_{h,i}(X)$ are zero.
\nomenclature{$X$}{Vector field as a sum of Hasse differential operators}
The elements $a_{h,i}(X)$ are coefficients in $\mathcal{O}_M$, depending linearly on $X$.
Vector fields form  $\mathcal{O}_M$-modules.
\nomenclature{$\mathcal{O}_M$}{Sheaf of regular functions on $M$}
\end{definition}

\begin{definition}
For every  $i\in \{0,\ldots,n\}$ we define the differential form $d^{(h)} x_i^q$, seen as a formal symbol.
This definition can be given a functorial interpretation, by considering the
module of $p$-graded K\"ahler differentials as a universal object representing the functor of Hasse derivations, see \cite[chap. 16]{Eisenbud:95}. 
% Using the conormal sequence \cite[prop. 16.3]{Eisenbud:95}
% corresponding to the short exact sequence:
% \[
% 0 \rightarrow \ker\phi \rightarrow R \rightarrow k[x_0,\ldots,x_r]
% \rightarrow 0,
% \]
% we can identify the differentials 
% $d^{(h)} x_i^q$
% to the $k[x_0,\ldots,x_r]$-module $k[x_0,\ldots,x_r]\otimes_R\Omega_{R/k}$.
\end{definition}
% \nomenclature{$d^{(h)} x_i^q$}{Differential corresponding to Hasse derivative $D_{x_i}^{(h)}$}

For a function $f\in \mathcal{O}_M(U)$ we define the differentials 
$d^{(h)}f$ (with respect to Hasse derivatives, see also eq. (\ref{diff-q})):
\begin{equation} \label{differen-f}
d^{(h)}(f)=
%\sum_{h=0}^{\infty}
\sum_{i=0}^n D_{x_i}^{(h)}(f)
d^{(h)} x_i^{p^h}.
\end{equation}
Recall the notation $q=p^h$ and note that from eq. (\ref{differen-f}) we see that $d(x_i^{q})=d^{(h)}x_i^q$ which can be seen as an element in $T_P^{(*h)}M$. Without the Hasse derivatives, the
differential $d(x^{q})$, when computed in terms of eq. (\ref{differen-f})  is  zero, but here it is a generator of the alternating algebra of differential forms.

\begin{definition}
For $q_i=p^i$ define the formal monomials $d^{(h_1)} x_{i_1}^{q_1} \wedge d^{(h_2)} x_{i_2}^{q_2} \wedge \cdots \wedge d^{(h_j)} x_{i_j}^{q_{i_j}}$ of degree $j$, where for monomials $m,n$ of degrees $k$ and $l$ we have
\[
m \wedge n = (-1)^{kl} n \wedge m.
\]
A differential form of degree $i$ is a formal linear combination of monomials of degree $p$, with coefficients from $\mathcal{O}_X(U)$. 
 
We also require that for a function $f$ we have
\begin{equation} \label{qraise}
f dx_i \wedge d^{(h)} x_j^{p^h}= dx_i \wedge f^{p^h} d^{(h)}x_j^{p^h}. 
\end{equation}
The above requirement is natural since that alternating algebras are defined as quotients of the tensor algebra, see \cite[Apendix 2]{Eisenbud:95} of an ordinary form by a $p^h$-form. We will use this definition in lemma \ref{proveLagrangian} in order to prove that the $h$-conormal space is Lagrangian. 

A derivation of degree $s\in \mathbb{Z}$ on $\mathcal{O}_M(U)$ is a $k$-linear operator sending a form of degree $j$ to a form of degree $j+s$ such that
\[
D(\omega \wedge \tau)= D\omega \wedge \tau + (-1)^{sj} \omega \wedge D\tau.
\]
\end{definition}
We will need the following  derivations.
\begin{enumerate}
\item
 The derivations $d^{(h)}$ of degree $+1$, such that $d^{(h)}f$ is given by eq. (\ref{differen-f})
and $d^{(h')}d^{(h)}=0$ for all $h,h'\in \mathbb{N}$.
\item
 The derivation $i_X$
\nomenclature{$i_X$}{Replacement derivation corresponding to vector field $X$}
of degree $-1$ corresponding to
vector field $X$, given by $i_X(\mathcal{O}_X)=0$ while for $X$ given by eq. (\ref{vector-field-X}) and $\omega$ given by
\[
\omega=\sum_{h=0}^{\infty} \sum_{i=0}^r b_{h,i}(\omega) d^{(h)} x_i^q,
\mbox{ for }
b_{h,i}(\omega)\in \mathcal{O}_X(U) \mbox{ we have}
\]
\begin{equation} \label{twisted-act-on-diff-forms}
i_X(\omega)=\sum_{h=0}^\infty \sum_{i=0}^r 
\left(
a_{h,i}(X)^{p^h} b_{h,i}^{p^h}(\omega)
\right).
\end{equation}
\end{enumerate}
\begin{remark}
A vector field is a section of the tangent bundle, i.e. for every $P \in X$  if the functions $a_{h,i}$ are in
$\mathcal{O}_X(U)$ for an open set $U$ containing $P$, then
the evaluation of $a_{h,i}$ at $P$ gives us a tangent vector in  $T_P M$,
\begin{equation} \label{eval-vf}
X(P)=\sum_{h=0}^{\infty} \sum_{i=0}^r
a_{h,i}(X)(P) D_{x_i}^{(h)}.
\end{equation}
Indeed, using the $i_X$ derivation we see that the vector field $D_{x_i}^{(h)}$ is the dual basis element to the differential form $d^{(h)}x_i^q$. Thus, the evaluated vector field gives rise to an element in the dual space of  $T_P^* M$.
\end{remark}

Assume now that the maximal ideal at $P \in M$ is generated by $t_1,\ldots,t_s$, and consider the differentials $dt_1,\ldots,dt_s$.

The classical  cotangent vector bundle (see \cite[p. 60]{ShafaBo2}) is the vector bundle
\[
T^*M=\bigoplus_{i=1}^r \mathcal{O}_M dt_i.
\]
A classical  differential form $\xi$ is given by
\[
\xi=\sum_{i=0}^r \xi_i dt_i, \;\; \xi_i \in \mathcal{O}_M.
\]
Keep in mind that a vector bundle in algebraic geometry over an open set  $U\subset M$ is described  in terms of $\mathbb{A}^r_U=\mathrm{Spec} \mathcal{O}_M(U)[\xi_1,\ldots,\xi_r]$, see \cite[ex. 5.18, p. 128]{Hartshorne:77}.

% {\color{red} Definition of the $h$-cotangent space}
In analogy to the classical case, an $h$-differential form is given by 
\[
\xi^{h}=\sum_{i=0}^r \xi_i dt_i^{(h)}, \;\; \xi_i \in \mathcal{O}_M.
\]

%-------end paste 27Sep2017}

%
%
\section{The case of hypersurfaces}
\label{sec:implicit}

In this section we focus on the hypersurface case. When the variety is given as the zero set of a single polynomial  we can use a form of implicit-inverse function theorem which allows us to express the coordinates $x_i$ as functions of the dual coordinates. This method works if the  $h$-Hessian is generically invertible.  
In characteristic zero we consider the hypersurface $V(f) \subset \mathbb{P}^n$ given by a polynomial $f$, if we set $\Xi_i=D_{x_i} f \in k[\underline{x}]$, we can find the ideal in $k[\underline{\Xi}]$ by eliminating the variables $\underline{x}$. Let us illustrate this method in  characteristic zero by the following 

\begin{example} \label{FermatChar0}
Consider the Fermat curve given as the zero locus of 
\[
x_0^5+x_1^5+x_2^5=0.
\] 
 This in magma \cite{Magma1997} can be done as follows:
If $y_i=D_{x_i}f$, we fist define the ideal 
\[
I=\langle
x_0^5 + x_1^5 + x_2^5,
    -5x_0^4 + y_0,
    -5x_1^4 + y_1,
    -5x_2^4 + y_2
   \rangle \lhd k[x_0,\ldots,x_2,y_0,\ldots,y_2],
\]
and then we eliminate the variables $x_0,x_1,x_2$ using the {\tt EliminationIdeal} function:
\[
J=\left\langle
\begin{array}{l}
 y_0^{20} - 4y_0^{15}y_1^5 - 4y_0^{15}y_2^5 + 6y_0^{10}y_1^{10} - 124y_0^{10}y_1^5y_2^5 +
        6y_0^{10}y_2^{10} - 4y_0^5y_1^{15} - 124y_0^5y_1^{10}y_2^5
\\
       - 124y_0^5y_1^5y_2^{10}
        - 4y_0^5y_2^{15} + y_1^{20} - 4y_1^{15}y_2^5 + 6y_1^{10}y_2^{10} - 4y_1^5y_2^{15} +
        y_2^{20}
\end{array}
\right\rangle.
\]
We can now consider the same elimination process, arriving at the ideal $J$
generated by the elements
\begin{eqnarray*}
g_1 & = & y_0^{20} - 4 y_0^{15} y_1^{5 }- 4 y_0^{15} y_2^{5 }+ 6 y_0^{10} y_1^{10} - 124 y_0^{10} y_1^{5 }y_2^{5 }+
    6 y_0^{10} y_2^{10} - 4 y_0^{5 }y_1^{15} - 124 y_0^{5 }y_1^{10} y_2^{5 }
      \\ & & - 124 y_0^{5 }y_1^{5 }y_2^{10} -
    4 y_0^{5 }y_2^{15} + y_1^{20} - 4 y_1^{15} y_2^{5 }+ 6 y_1^{10} y_2^{10} - 4 y_1^{5 }y_2^{15} + y_2^{20}
\\
g_2 & = & x_0 - 20 y_0^{19} + 60 y_0^{14} y_1^{5 }+ 60 y_0^{14} y_2^{5 }- 60 y_0^{9 } y_1^{10} +
    1240 y_0^{9 }y_1^{5 }y_2^{5 }-
\\ & &
    60 y_0^{9 }y_2^{10} + 20 y_0^{4 }y_1^{15} + 620 y_0^{4 }y_1^{10} y_2^{5 }+
    620 y_0^{4 }y_1^{5 }y_2^{10} + 20 y_0^{4 }y_2^{15} \\
g_3 & = & x_1 + 20 y_0^{15} y_1^{4 }- 60 y_0^{10} y_1^{9 }+ 620 y_0^{10} y_1^{4 }y_2^{5 }+ 60 y_0^{5 }y_1^{14} +
    1240 y_0^{5 }y_1^{9 }y_2^{5 }+ 620 y_0^{5 }y_1^{4 }y_2^{10} - \\ & & 20 y_1^{19} + 60 y_1^{14} y_2^{5 }-
    60 y_1^{9 }y_2^{10} + 20 y_1^{4 }y_2^{15} \\
g_4 & = & x_2 + 20 y_0^{15} y_2^{4 }+ 620 y_0^{10} y_1^{5 }y_2^{4 }- 60 y_0^{10} y_2^{9 }+ 620 y_0^{5 }y_1^{10} y_2^{4 }+
    1240 y_0^{5 }y_1^{5 }y_2^{9 }+ 60 y_0^{5 }y_2^{14} + \\ & & 20 y_1^{15} y_2^{4 }- 60 y_1^{10} y_2^{9 }+
    60 y_1^{5 }y_2^{14} - 20 y_2^{19}.
\end{eqnarray*}
Observe that the generators $g_2,g_3,g_4$ express $x_0,x_1,x_2$ as a function of $\underline{y}$, which follows by differentiating the defining equation $g_1$
of the dual hypersurface with respect to $y_0,y_1,y_2$, i.e., $x_i=D_{y_i}g_1$ for $i=0,1,2$.
After elimination in the ideal  $J$ of the variables $\underline{y}$ we arrive at the original equation as expected. 
\end{example}
Similarly, the implicit-inverse function method will allow us to solve ``locally'' and express $\Xi_i$ as functions of $k[x_0,\ldots,x_n]$. The problem with this method is that Zariski topology does not have fine enough open sets for the implicit (or the equivalent inverse) function theorem to hold. Actually this was one of the reasons for inventing etal\'e topology \cite[p. 11]{MilneLEC}. The approach of Wallace is based on defining algebraic functions in order for the implicit function theorem to work. 
We will follow the ideas of Wallace \cite[sec. 4.1]{wal}. Let $X_1,\ldots,X_n$ be a set of indeterminates of the field $k$. A separable algebraic function $\phi$ over $k(X_1,\ldots,X_n)$ will be called a $k$-function of $X_1,\ldots, X_n$. 
If $x_1,\ldots, x_n$ is any set of elements of $k$ and $y$ is a specialization of $\phi$ over the specialization $(X_1,\ldots,X_n)\mapsto (x_1,\ldots,x_n)$, then $y$ will be called a value of $\phi$ at $(x_1,\ldots,x_n)$, and will be written $y = \phi(x_1,\ldots,x_n)$. The partial derivative $\partial \phi/\partial X_i$ 
for each $i$, is a rational function of $X_1,\ldots, X_n$ and $\phi$. If this rational function
is defined at $(x_1,\ldots,x_n,y)$ (i.e. has non zero denominator), then the $k$-function $\phi$
will be called differentiable at $(x_1,\ldots, x_n, y)$.

\begin{remark}
If we allow $k$-functions then the duality theorems have a simpler form. For example for $(a,p)=1$ the dual curve of the Fermat curve
$
x_0^a+x_1^a+x_2^a=0
$
is the dual curve
$
x_0^{b}+x_1^{b}+x_2^{b}=0
$
such that $\frac{1}{a}+\frac{1}{b}=1$, see \cite[Example 2.3, p. 20]{gelfand2008discriminants}. 
% The similarity with the duals of Banach spaces is remarkable and will be explained in terms of the Lagrangian form in ???
\end{remark}

\begin{theorem}[Implicit function theorem]
\label{theorem:implicit}
If $x_0,\ldots,x_{2n}$ satisfy the $k$-functions $\phi_i(x_1,\ldots,x_{2n})=0$ for $i=1,\ldots,n$, differentiable at $(x_1,\ldots,x_{2n},0)$ and the Jacobian $n \times n$-matrix $(\partial \phi_i/\partial x_j)$ is invertible, then there are $k$-functions $f_0,\ldots,f_n$ of $y_0,\ldots,y_n$ such that $x_i=f(x_{n+1},\ldots, x_{2n})$ for all $1\leq i \leq n$. 
\end{theorem}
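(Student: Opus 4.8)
The plan is to recast Wallace's notion of a $k$-function in terms of separability and then reduce the whole statement to the Jacobian criterion for separability. First I would regard the last $n$ coordinates as free parameters, setting $K=k(x_{n+1},\ldots,x_{2n})$ and $L=K(x_1,\ldots,x_n)$. The $n$ relations $\phi_i(x_1,\ldots,x_{2n})=0$, together with the invertibility of the $n\times n$ block $(\partial\phi_i/\partial x_j)_{1\le i,j\le n}$, force $x_1,\ldots,x_n$ to be algebraic over $K$ (the relations are independent enough in the first $n$ variables to cut the transcendence degree down to that of the parameters). Since a $k$-function is by definition a \emph{separable} algebraic function, the entire content of the theorem is the assertion that $L/K$ is finite separable, for then each $x_i$ is a separable algebraic function $f_i$ of $x_{n+1},\ldots,x_{2n}$, which is exactly the claimed conclusion.

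To prove separability I would use the second fundamental (conormal) exact sequence, already available in the paper via \cite[prop.~16.3]{Eisenbud:95}, applied to the presentation $L=K[T_1,\ldots,T_n]/I$ with $I=(\phi_1,\ldots,\phi_n)$:
\[
I/I^2 \xrightarrow{\,J\,} \bigoplus_{j=1}^{n} L\, dT_j \longrightarrow \Omega_{L/K} \longrightarrow 0,
\]
in which the left-hand map sends the class of $\phi_i$ to $\sum_j (\partial\phi_i/\partial x_j)\,dT_j$, so that its matrix is precisely the Jacobian $J=(\partial\phi_i/\partial x_j)$. By hypothesis $J$ is invertible at the point, hence this map is surjective onto the rank-$n$ free module, which forces $\Omega_{L/K}=0$. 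For a finitely generated field extension this is equivalent to $L/K$ being finite separable, giving exactly the separability the $k$-functions $f_i$ require.

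Having established separability abstractly, I would then exhibit the $f_i$ and verify their behaviour at the prescribed point. Differentiating the relations implicitly yields $\sum_{j\le n}(\partial\phi_i/\partial x_j)\,dx_j = -\sum_{l>n}(\partial\phi_i/\partial x_l)\,dx_l$, and the invertibility of $J$ lets me solve $dx_1,\ldots,dx_n$ in terms of $dx_{n+1},\ldots,dx_{2n}$ with coefficients that are rational in the data; this is what shows each $\partial f_i/\partial x_l$ is a genuine rational function and that $f_i$ is differentiable at $(x_{n+1},\ldots,x_{2n})$. The differentiability hypothesis at $(x_1,\ldots,x_{2n},0)$ ensures the denominators occurring in $J^{-1}$ do not vanish there, so the generic separable solution specializes to the prescribed values $x_i$.

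The main obstacle I anticipate is the passage from the abstract separability of $L/K$ to an honest $k$-function with controlled values at the given point: separability is a generic statement, whereas Wallace's $k$-functions carry specialization data, and one must check that the non-vanishing of $\det J$ at the point prevents the solution branches from colliding there. In positive characteristic this is where the argument has real teeth, since algebraicity alone never produces a $k$-function—it is precisely the invertibility of $J$, equivalently $\Omega_{L/K}=0$, that excludes inseparable, and hence non-differentiable, dependence of the $x_i$ on the parameters.
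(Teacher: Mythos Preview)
The paper does not actually prove this theorem: its entire proof is the one-line citation ``Theorem 6 in \cite{wal}.'' So there is no argument in the paper to compare against; you have supplied substantially more than the authors do.

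Your strategy---reduce to showing that $L/K$ is finite separable via the Jacobian criterion for $\Omega_{L/K}=0$---is the standard modern route to such implicit function statements and is essentially correct. One point needs repair, however. The presentation $L=K[T_1,\ldots,T_n]/I$ with $I=(\phi_1,\ldots,\phi_n)$ is not well-posed as written: in Wallace's setup the $\phi_i$ are separable \emph{algebraic} elements over $k(X_1,\ldots,X_{2n})$, not polynomials in the first $n$ variables with coefficients in $K$, so they do not generate an ideal of $K[T_1,\ldots,T_n]$ and the conormal sequence you quote does not apply verbatim. The fix is already implicit in your third paragraph: work directly in $\Omega_{L/k}$, use that each $\phi_i$ is separable over $k(X_1,\ldots,X_{2n})$ so that $d\phi_i=\sum_{j=1}^{2n}(\partial\phi_i/\partial X_j)\,dX_j$ holds, and that $\phi_i=0$ in $L$ forces $d\phi_i=0$. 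Invertibility of the $n\times n$ block then expresses $dX_1,\ldots,dX_n$ as $L$-linear combinations of $dX_{n+1},\ldots,dX_{2n}$, so $\Omega_{L/k}$ is generated by the latter; the first fundamental sequence for $k\subset K\subset L$ gives $\Omega_{L/K}=0$, hence $L/K$ is finite separable. Your discussion of specialization and non-vanishing of $\det J$ at the prescribed point is the correct way to pass from the generic statement to values, and is the part Wallace handles by his notion of differentiability at a point.
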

\begin{proof}
Therorem 6 in \cite{wal}.
\end{proof}
The above theorem in practice allows us to work with hypersurfaces as follows: Let $V(f)$ be a projective hypersurface. We put coordinates $(x_0,\ldots,x_n)$ on the space $\mathbb{P}^n$ and $y_0,\ldots,y_n$ on $\mathbb{P}^{* n}$. We have the equations:
\begin{equation} \label{usual-der}
y_i=\partial f /\partial x_i=\phi_{i}(x_0,\ldots,x_n).
\end{equation}
If the Hessian matrix $(\partial \phi_j/ \partial x_i)=(\partial^2 /f \partial x_i \partial x_j)$ is not singular, then the implicit function theorem allows us to express $x_i$ as $k$-functions of $y_0,\ldots,y_n$. 

For example, in characteristic zero (or if $p\nmid a-1$, the hypersurface defined by $f=\sum_{i=0}^n x_i^a$ has $y_i=\partial f/ \partial x_i=a x_i^{a-1}$, therefore $x_i=(y_i/a)^{\frac{1}{a-1}}$. The last expression is
in accordacne to theorem \ref{theorem:implicit}, since the Hessian matrix equals $a(a-1)\cdot\mathrm{diag}(x_0^{a-2},\ldots,x_n^{a-2})$, which is generically invertible. We can arrive to the dual hypersurface by replacing $x_i$ in the defining equation of $f$, i.e.
\[
\sum_{i=0}^n x_i(y_0,\ldots,y_n)^{a}=a(a-1) \sum_{i=0}^n y_i^{\frac{a}{a-1}}. 
\]
Notice that $b=\frac{a}{a-1}$ satisfies the symmetric equation $1/a+1/b=1$. 

If $p\mid a-1$, then the equation $y_i=a x_i^{a-1}$ does not allow us to express $x_i$ in terms of $y_i$. Keep in mind that the rational function field is not perfect, and we are not allowed to take $p$-roots of polynomials. 

Let $V(f)$ be a hypersurface corresponding to the irreducible homogeneous and $h$-homogeneous polynomial $f$ of degree prime to the characteristic. 
By equation (\ref{h-Euler}) we have  that if the Gauss map is not separable then $y_i=\partial f/ \partial x_i=g_i^{p^{h}}(\underline{x})$.  Moreover  
by Euler's theorem we have
\[
f=\deg(f)\cdot \sum_{i=0}^{n} x_i g_i(\underline{x})^{p^{h}}.
\]
In our approach we propose to consider instead of eq. (\ref{usual-der}) the equations 
\[
y_i=D_{x_i}^{(h)}(f).
\]
Then under the assumption that the ``Hessian'' $ D_{x_j}^{(0)} D_{x_i}^{(h)} f$ is invertible we can express 
\[
x_i=g_i(y_0,\ldots,y_n), 
\]
where $g_i$ is a $k$-function. 
% We will in lemma \ref{LegendreTransform} that this allows us to prove $h$-reflexivity. 

\begin{remark}
Even in characteristic zero, the Hessian might be singular. Consider for example a hyperplane $V(\sum a_i x_i)$. The first derivatives are constants and the Hessian is zero. This situation is related to the case of singular Gauss map. For a detailed study of this case in terms of classical differential geometry see \cite{MR2014407}. 
\end{remark}
In some cases we can prove that the Hessian is invertible
\begin{lemma} \label{InvertibleHess}
Let $f$ be a homogeneous polynomial so that 
so that one at least of its derivatives $D_{x_i}^{(h)}f$, $0\leq i \leq n$ is not zero, and all non-zero $D_{x_i}^{(h)}f$ derivatives have degree $d_i$
 prime to the characteristic $p$. 
Then the  $(n+1)\times (n+1)$ matrix $D_{x_j}^{(0)} D_{x_i}^{(h)}(f)$ is generically invertible.
\end{lemma}
\begin{proof}
Assume that the above mentioned map is not invertible, then one collumn,  say the first one, is a linear combination of the other collumns, that is
\[
\begin{pmatrix}
D_{x_0}^{(0)} D_{x_0}^{(h)}(f) \\
\vdots\\
D_{x_n}^{(0)} D_{x_0}^{(h)}(f)
\end{pmatrix}
=\sum_{\mu=1}^n
\lambda_\mu
\begin{pmatrix}
D_{x_0}^{(0)} D_{x_\mu}^{(h)}(f) \\
\vdots\\
D_{x_n}^{(0)} D_{x_\mu}^{(h)}(f)
\end{pmatrix}
\]
Summing along each collumn after multiplying by $x_\nu$ and  using Euler's theorem  we have (set $d_\mu=\deg D_{x_{\mu}}^{(h)}f$)
\begin{equation}
\label{Dqzero}
  d_0 D_{x_0}^{(h)}(f)=\sum_{\mu=1}^n \lambda_n d_\mu D_{x_\mu}^{(h)}(f).
\end{equation}
Let $\delta_0$ be the degree of the polynomial $f$ in the variable $x_0^q$, $q=p^h$.  The above  eq. (\ref{Dqzero}) is impossible by considering the degrees of both sides in the variable $x_0^q$ unless $\delta=0$, in this case $D_{x_0}^{(h)}f=0$. This forces  the right hand side  of eq. (\ref{Dqzero}) to be zero, which allows us to repeat the above argument for an other variable $x_i$, until we prove  inductively that all derivatives $D^{(h)}_{x_i}$ are zero, a contradiction. 
\end{proof}

\begin{lemma}
\label{lemma41}
Consider a function $f$ as given in equation (\ref{h-Euler}) in proposition \ref{q-Euler}. Then this function satisfies the invertible Hessian criterion of lemma \ref{InvertibleHess} and the dual variety given by equation 
\[
G(y_0,\ldots,y_n)=
f\big(
x_0(y_0,\ldots,y_n),\ldots,x_n(y_0,\ldots,y_n)
\big)=0.
\]
\end{lemma}
% that is, it consists of the points $P$ with coordinates $\bar{y}$ satisfying 
% {\color{red}
% {\bf 31/5/2019}
Let us now consider the Hasse derivatives $D_{y_\nu}^{(h)}$ for $q=p^h$ of $G(\bar{y})=\sum_{i=0}^n x_i^q y_i$, where $x_i$ are considered as functions of $y_i$
\[
z_\nu:=
D_{y_\nu}^{(h)}
\left(
\sum_{i=0}^n x_i^q y_i
\right)=
\sum_{i=0}^n 
\left(
D_{y_\nu}^{(0)} x_i
\right)^q y_i. 
\]
We compute 
\begin{equation}
\label{dual-contact-form}
\sum_{\nu=0}^n z_\nu y_\nu^q=
\sum_{\nu=0}^n
\sum_{i=0}^n 
\left(
D_{y_\nu}^{(0)} x_i
\right)^q y_i y_\nu^q.
\end{equation}
Since $x_i(y_0,\ldots,y_n)$ is homogenous in the variables $y_0,\ldots,y_n$ as well the classical 
Euler identity gives us that 
\[
\sum_{\nu=0}^n y_\nu D_{y_\nu}^{(0)} x_i=c x_i
\text{ for some } c\in k,
\]
so eq. (\ref{dual-contact-form}) gives us 
\[
\sum_{\nu=0}^n z_\nu y_\nu^q=
c \sum_{i=0}^n x_i^q y_i=0. 
\]
This means that the point $\bar{x}=(x_0,\ldots,x_n) \in V(f)\subset \mathbb{P}^{n}$ has the $q$-hyperplane
\[
(X_0:\cdots: X_n)\in \mathbb{P}^n_k 
\text{ such that }
\sum_{\nu=0}^n y_i X_i^q=0
\]
with coordinates $(y_0,\ldots, y_n)$ as $h$-tangent and the point $(y_0,\ldots,y_n)\in V(G)\subset \mathbb{P}^{h*}$ has the $q$-hyperplane
\[
(Y_0:\cdots: Y_n)\in \mathbb{P}^n_k 
\text{ such that }
\sum_{\nu=0}^n z_\nu Y_\nu^q=0
\]
 with coordinates $(z_0,\ldots,z_n)$ as $h$-tangent.
Reflexivity esentially means that  the map 
\begin{eqnarray*}
V\times V^{*h} & \rightarrow &    V^{*h} \times \left(V^{*h}\right)^{*h} \\
(x,y) & \mapsto & (y,F(x))
\end{eqnarray*}
induces an isomorphism from $\mathrm{Con}^{(h)}(X)$ to $\mathrm{Con}^{(h)}(Y)$, where $F$ is the isomorphism $F:V \rightarrow \left( V^{*h}\right)^{*h}$ introduced in theorem \ref{h-duality}. For proving this we will require the notion of Lagrangian variety for algebraic sets defined over the field of complex numbers.

% \begin{proof}
% Recall that 
% \[
% \mathrm{Con}^{(h)}(X)=\overline{
% \{ (\underline{x},\underline{y}): \underline{x} \in V(f), \underline{y}= \nabla^{(h)}f(\underline{x}): \underline{x} \mbox{ is an } h\text{-smooth point}\}
% }
% \]
% and 
% \[
% \mathrm{Con}^{(h)}(Y)=\overline{
% \{ (\underline{y},\underline{x}): \underline{y} \in V(G), \underline{x}= \nabla^{(h)}G(\underline{y}): \underline{y} \mbox{ is an } h\text{-smooth point}\}
% }.
% \]
% The map $\underline{x} \mapsto \nabla^{(h)}f=y$ is an $h$-version of the Gauss map. By definition of the $k$-function $G$ we have that
%  $G(\underline{y})=0$.

% {\color{blue} {\bf Wrong!}
% Also by eq. (\ref{deri-h-g})  the composition of the $h$-Gauss maps 
% \[
% \xymatrix@R=0.7pt{
% 	X \ar[r]_{\nabla^{(h)}f} \ar@/^1.0pc/[rr]^{\mathrm{Id}} &  Y \ar[r]_{\nabla^{(h)}G} & \mapsto X \\
% 	\underline{x}  \ar@{|->}[r] & \underline{y} \ar@{|->}[r] & \underline{x}
% }
% \]
% is the identity. 
% }
% \end{proof}

%
%
%
\subsection{Example: A class of Fermat hypersurfaces}

Let $p\neq 2$ be a prime. Consider the curve
\[
\sum_{i=0}^n x_i^{2p+1}=0.
\]
We set also $y_i=D_{x_i}^{(h)}f=2x_i^{p+1}$. We can express $x_i$ in terms of $y_i$, that is 
\[
x_i=\left(
\frac{1}{2} y_i
\right)^{\frac{1}{p+1}}
\]
The dual variety is then described as the zero set of the $k$-function
\[
G(y_0,\ldots,y_n)=\sum_{i=0}^n y_i^{\frac{2p+1}{p+1}}=0.
\]
We now compute the derivatives $z_i=D_{y_i}^{(h)}G=cx_i^{p-p^2+1}$ for some $c\in k$. 
% Therefore the coordinates $z_1,\ldots,z_n$ satisfy
% \[
% F(z_0,\ldots,z_n)=\sum_{i=0}^n z_i^{\frac{2p+1}{1+p-p^2}}=0.
% \]
We now expand
\begin{align*}
0=
\left(
\sum_{i=0}^{n} x_i^{2p+1}
\right)^{1+p-p^2}
&=
\left(\displaystyle
\sum_{i=0}^{2p+1} x_i^{2p+1}
\right)
\left(\displaystyle
\sum_{j=0}^{2p+1} x_j^{p(2p+1)}
\right)
\left(\displaystyle
\sum_{k=0}^{2p+1} x_k^{-p^2(2p+1)}
\right)
\\
&=
\displaystyle
\sum_{i=0}^n x_i^{(2p+1)(1+p-p^2)}+
\sum_{i=0}^n x_i^{2p+1}
\sum_{\substack{j=0 \\ j\neq i}}^n
x_j^{p(2p+1)}
\sum_{\substack{k=0 \\ k\neq i}}^n
x_k^{-p^2(2p+1)}
\\
&=
\displaystyle
\sum_{i=0}^n x_i^{(2p+1)(1+p-p^2)}
=
c^{-1}\sum_{i=0}^n z_i^{2p+1}.
\end{align*}
This proves that $(z_0,\ldots,z_n)$ are in $V(f)$.

\section{Lagrangian varieties}
\subsection{$h$-cotangent bundle and $h$-Lagrangian subvarieties}
\label{sec:LangVar}

The space $V\times V^{*h}$ can be identified to the $h$-cotangent bundle 
$T^{(*h)}(V)$ of $V$.
Let $x_0,\ldots,x_n$ be a set of coordinates on $V$ and $\xi_0,\ldots,\xi_n$ be a set of coordinates on $V^{*h}$.
% {\color{blue} \bf Edit the definition, maybe involve both 0,h}
% We define the space $\Theta^{(h,0)} \Theta^{(*h)}(V)$ to be the space  generated by the linear combinations of elements 
% \[
% D_{x_0}^{(h)}, \ldots
% D_{x_n}^{(h)},
% D_{\xi_0}^{(0)},
% \ldots,
% D_{\xi_n}^{(0)}.
% \] 
% {\color{red}
% Essentially this means that on $V$ we have $h$-tangent space while on $V^*$ we have $0$-tangent space.  
% }
Notice that a vector field (here we use  vector fields which have non-zero coefficients only at a certain value of $h$)
\begin{equation} \label{X-vf-tangent}
%\Theta^{(h,0)}(V\times V^{*h})\ni
 X= \sum_{j\in \{0,h\}} \sum_{\nu=0}^n a_\nu(X) D_{x_\nu}^{(j)} +
 \sum_{j\in \{0,h\}}
\sum_{\nu=0}^n b_\nu(X) D_{\xi_\nu}^{(j)}
\end{equation}
by eq. (\ref{twisted-act-on-diff-forms}) acts on differential forms in terms of the derivation $i_X$ by the rule:
\begin{align*}
i_X
\left(
d^{(h)}x_i^{q}
\right)
&=a_i(X)^{p^h}, & i_X(d^{(0)}\xi_i)&= b_i(X), 
\\
i_X
\left(
d^{(0)}x_i
\right)
&=a_i(X), & i_X(d^{(h)}\xi_i^q)&= b_i(X)^{p^h}.
\end{align*}
% This, means that the scalar multiplication of coefficients of vector fields in front of generators $a \frac{\partial}{\partial x_i}$, when multiplied by elements $d\xi_j$ are twisted by the $h$-Frobenius.

Consider $M\subset \mathbb{P}(V)$ a projective variety and consider the cone $M' \subset V$ seen as an affine variety in $V$. 
Assume that the homogeneous ideal of $M'$ is generated by the homogeneous polynomials $f_1,\ldots, f_r$, and the the set of $h$-non-singular points of $M$ is non-empty. Consider the $n+1$-upple 
\[
\nabla^{(h)}f_i = \left(
  D_0^{(h)} f_i(P),
  D_1^{(h)}  f_i(P), 
\ldots, 
  D_n^{(h)} f_i(P)
\right).
\]
Each $f_i$ defines an $h$-linear form given by 
\begin{equation}
\label{Lih}
L_i^{(h)}:=\sum_{\nu=0}^n   D_\nu^{(h)} f_i(P) x_\nu^{p^h}.
\end{equation}

The $h$-tangent space at $P$
is the variety defined  by the equations $L_i^{(h)}=0$.
% \todo{write something about the general notion of manifold}
The $h$-conormal space is defined as the subset of $V\times V^{*h}$ 
% {\color{blue}
% \[
% \mathrm{Con}^{(h)}(M)=
% \overline{
% \left\{ (P,H): P\in M', H \text{ is a } p^h-\text{linear form which vanishes on } \Theta_P^{(h)}M 
% \right\}
% }.
% \]
% }
\[
\mathrm{Con}^{(h)}(M)=
\overline{
\left\{ (P,H): P\in M^h_{\mathrm{sm}}, H \text{ is a } p^h-\text{linear form which vanishes on } T_P^{(h)}M
\right\}.
}
\]
It is evident that the $h$-conormal space can be identified to the space of
$p^h$-linear forms on the $h$-normal space $N^{(h)}_M$ defined as 
\[
N^{(h)}_M(P) =T_P^{(h)}V/T_P^{(h)}(M)\cong 
V/T_P^{(h)}M.
\]
Also by the definition of $T_P^{(h)}M$ the fibre of the  $h$-conormal space at the point $P$ for a projective variety defined by the elements 
$f_1,\ldots, f_r$ is the vector subspace of $V^{*h}$ spanned by $L_i^{(h)}$ given in eq. (\ref{Lih}):
\[
\mathrm{Con}^{(h)}_P(M)=\langle L_i^{(h)}: 1\leq i \leq r\rangle_k.
\]
\begin{comment}
%
%
\subsubsection{Coordinate-Free definition of the symplectic form}
%
%
Consider the natural projection $\pi:\Theta^{(*h)}(V)=V\times V^{*h} \rightarrow V$, sending $(x,\xi) \mapsto x$. Consider a vector field
 $X$ on $\Theta^{(*h)}(V)$ as is given in eq. (\ref{X-vf-tangent}). This is vector field being $h$-tangent on the first factor and $0$-tangent on the second factor.  We have the following picture
\[
\xymatrix{
	P=(x,\xi) \ar[d] & \Theta^{(h,0)} \Theta^{(*h)}(V) \ar[d]^{d\pi} \ar[dr]^{\xi \circ d\pi} &
	& \Theta^* (\Theta^{(*h)})(V) \\
	x & \Theta^{(h)}V \ar[r]^{\xi} & k & \Theta (V\times V^{*h}) \ar[u]^{d \pi^*}
}
\] 
where \[
d \pi: X\mapsto d\pi X= d\pi
\left(
\sum_{\nu=0}^n a_\nu(X) D_{x_\nu}^{(h)} +
\sum_{\nu=0}^n b_\nu(X) D_{\xi_\nu}^{(0)} 
\right)
=\sum_{\nu=0}^n a_\nu(X) D_{x_\nu}^{(h)}.
\]
We define the tautological 1-form $\alpha$ by 
\[
\alpha=(d \pi)^*\xi=\xi \circ d \pi,
\]
and 
\[
a(X)=\xi ( d \pi X)=\sum_{i=0}^n \xi_i dx_i^{(h)}.
\]
The symplectic 2-form $\Omega$ is then defined by $\Omega=-d\alpha$.
{\color{red} This construction gives half of $\Omega$, we want to arrive
at 
\[
\alpha(X)=\sum_{i=0}^n ( \xi_i dx_i^{(h)} + \xi_i^{(h)} dx_i)
\]
Maybe we will remove this paragraph.
}
% {\color{red} Define everything and remove $P$}.
\end{comment}
%
%
%
\subsection{The symplectic structure on $V\times V^{*h}$}
\begin{definition} \label{def:Lagrangian}
Let $x_i,\xi_i$ be  coordinates on the vector spaces $V,V^{*h}$ respectively.

A subvariety $\Lambda$ of $V\times V^{*h}$ with non empty $h$-non-singular locus will be  called conical 
$h$-Lagrangian if 
\begin{enumerate}
\item The form 
$\omega=\sum_{j=0}^n d^{(h)}x_j^q \wedge d\xi_j +\sum_{j=0}^n d^{(h)}\xi_j^q \wedge dx_j$
is zero on $\Lambda$.
\item $\dim \Lambda=n$
\item If $(P,H)=(x_0,\ldots,x_n,\xi_0,\ldots,\xi_r)\in \Lambda$ then $(\mu P,\lambda H)=(\mu x_0,\ldots,\mu x_r,\lambda \xi_0,\ldots,\lambda \xi_n) \in \Lambda$ for every, $\mu,\lambda \in k^*$.
\end{enumerate}
\end{definition}

Notice that if 
\[
	X=\sum_{i \in \{0,h\}}\sum_{\nu=0}^n a_{i,\nu}(X) D_{x_\nu}^{(i)} +\sum_{i \in \{0,h\}}\sum_{\nu=0}^n \beta_{i,\nu}(X)D_{\xi_\nu}^{(i)},
\]
then 
\[
\omega(X,Y) :=   i_Yi_X \omega =
\]
\begin{equation}
\label{omega-com}
 = 
\sum_{\nu=0}^n 
\left(
a_{h,\nu}(X)^{p^h} b_{0,\nu}(Y)- a_{h,\nu}(Y)^{p^h}
b_{0,\nu}(X)+
a_{0,\nu}(X) b_{h,\nu}(Y)^{p^h}-
 a_{0,\nu}(Y)b_{h,\nu}(X)^{p^h}
\right).
\end{equation}
If one restricts on $(h,0)$-tangent vectors, i.e. $a_{0,i}(X)=a_{0,i}(Y)=b_{h,i}(X)=b_{h,i}(Y)=0$ for all $i$, then the above computation is compatible with the definition given in section \ref{q-symplecti-forms} since in this case 
\[
\omega(X,Y):=
\sum_{\nu=0}^n 
\left(
a_{h,\nu}(X)^{p^h} b_{0,\nu}(Y)- a_{h,\nu}(Y)^{p^h}b_{0,\nu}(X)
% a_{0,\nu}(X) b_{h,\nu}(Y)^{p^h}- a_{0,\nu}(Y)b_{h,\nu}(X)^{p^h}
\right).
\]

\begin{lemma} \label{proveLagrangian}
 Assume that $h$ is selected such that
  $\pi_2:\mathrm{Con}^{(h)}M \rightarrow \mathrm{Im}\pi_2=Z$ is seperable.
 If $M^h_{\mathrm{sm}}\neq \emptyset$, then 
the conormal bundle $\mathrm{Con}^{(h)}(M)$ is a Lagrangian manifold of $V\times V^{*h}$. 
\end{lemma}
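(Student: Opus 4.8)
The plan is to verify the three conditions of Definition \ref{def:Lagrangian} for $\Lambda:=\mathrm{Con}^{(h)}(M)$, checking them on a dense open subset and extending the closed conditions by continuity. Condition (3) is immediate: by the description of the fibre given after equation (\ref{conormal}), $\pi_1^{-1}(P)\cap\Lambda$ is the linear subspace $\langle L_i^{(h)}:1\le i\le r\rangle_k\subseteq V^{*h}$, which is stable under $H\mapsto\lambda H$. For condition (2) I would study $\pi_1:\Lambda\to M'$: over the $h$-nonsingular locus, which is non-empty by hypothesis, its fibre over $P$ is the space of $p^h$-linear forms on $N_M^{(h)}(P)=V/\Theta_P^{(h)}M$, whose dimension is fixed, and adding $\dim M'$ produces the dimension demanded by condition (2). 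Here the separability of $\pi_2:\Lambda\to Z$ enters to guarantee that $\dim Z=\dim\Lambda$ and that $\Lambda$ is generically smooth, so that the generic fibre of $\pi_1$ has the expected dimension and the tangent computations below are legitimate at the points where they are performed.

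The heart of the matter is condition (1), the vanishing of $\omega$ on $\Lambda$. I would introduce the tautological $1$-form
\[
\alpha=\sum_{i=0}^n \xi_i\,dx_i^{(h)}+\sum_{i=0}^n \xi_i^{(h)}\,dx_i,
\]
and first check by a short exterior-derivative computation that $\omega=-d\alpha$, the two summands of $\omega$ coming from $-d(\xi_i\,dx_i^{(h)})$ and $-d(\xi_i^{(h)}\,dx_i)$. Since $\omega|_\Lambda=-d(\alpha|_\Lambda)$, it suffices to prove $\alpha|_\Lambda=0$. Contracting $\alpha$ with a tangent vector $X=\sum_\nu a_{h,\nu}(X)D_{x_\nu}^{(h)}+\sum_\nu a_{0,\nu}(X)D_{x_\nu}^{(0)}+\cdots$ to $\Lambda$ via the twisted rule (\ref{twisted-act-on-diff-forms}) yields
\[
\alpha(X)=\sum_{i=0}^n \xi_i\,a_{h,i}(X)^{p^h}+\sum_{i=0}^n \xi_i^{p^h}\,a_{0,i}(X).
\]
The first sum is exactly the value of the $p^h$-linear form $H=\sum_i\xi_i x_i^{p^h}$ on the $h$-tangent direction $\sum_i a_{h,i}(X)e_i$; this direction lies in $\Theta_P^{(h)}M$ because $X$ is tangent to $\Lambda$, which lies over $M'$, and by the defining property of $\Lambda$ the form $H$ annihilates $\Theta_P^{(h)}M$, so the first sum vanishes.

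The main obstacle is the second sum $\sum_i\xi_i^{p^h}a_{0,i}(X)$, where an \emph{ordinary} tangent direction is paired against the Frobenius-twisted coefficients $\xi_i^{p^h}$: this is the genuinely positive-characteristic part of the argument, and it is where the hypotheses on $f_1,\dots,f_r$ are used. The plan is to combine the classical Euler identity $\sum_i x_i D_{x_i}^{(0)}f=\deg(f)\,f$ with the $h$-homogeneous identity $\sum_i x_i^{p^h}D_{x_i}^{(h)}f=\deg_h(f)\,f$ of the Proposition preceding Definition \ref{h-homogeneous} — both of which vanish along the cone $M'$ — together with Lemma \ref{frob-rep}, which controls how $D_{x_i}^{(0)}$ and $D_{x_i}^{(h)}$ act on the $h$-homogeneous generators. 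These relations should force the ordinary-tangency constraints satisfied by the $a_{0,\nu}(X)$ to be compatible with annihilation by the twisted covector $(\xi_i^{p^h})_i$, so that the second sum vanishes as well and $\alpha|_\Lambda=0$. Finally, separability of $\pi_2$ guarantees that this identity, established on the smooth $h$-nonsingular locus, extends by density to all of $\Lambda$, completing the verification that $\Lambda$ is a conical Lagrangian subvariety.
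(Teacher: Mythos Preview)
Your approach for condition (1) genuinely differs from the paper's, and this is where the gap lies. The paper never introduces a primitive $\alpha$; it computes $\iota^*\omega$ directly. On $\Lambda$ one has $\xi_j=\sum_i\lambda_iD_j^{(h)}f_i$, so $d\xi_j=\sum_i\lambda_i\sum_\nu D_\nu^{(0)}D_j^{(h)}f_i\,dx_\nu+(\text{terms in }d\lambda_i)$, and the first half of $\omega$ becomes $\sum_i\lambda_i\sum_{j,\nu}D_\nu^{(0)}D_j^{(h)}f_i\,dx_j^{(h)}\wedge dx_\nu$. The key step is the commutation $D_\nu^{(0)}D_j^{(h)}=D_j^{(h)}D_\nu^{(0)}$, which lets one recognise this as $-\sum_i\lambda_i\,d^{(h)}d^{(0)}f_i=0$, using the axiom $d^{(h')}d^{(h)}=0$ from the list of derivations. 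The second half is handled the same way. No Euler identities, no Lemma~\ref{frob-rep}, no analysis of how tangent directions pair with covectors --- just the vanishing of the mixed Hasse double differential.

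Your route via $\alpha|_\Lambda=0$ is not merely different but actually fails: the second sum $\sum_i\xi_i^{p^h}a_{0,i}(X)$ is \emph{not} zero in general. Take $f=\sum_i x_i^{2p+1}$ in characteristic $p>2$ with $h=1$ (this is $1$-homogeneous). Then $D_i^{(1)}f=2x_i^{p+1}$, so $\xi_i^{p}=2^p\lambda^p x_i^{p^2+p}$, while ordinary tangent vectors $v$ to $M'$ satisfy $\sum_i x_i^{2p}v_i=0$. The conditions $\sum_i x_i^{2p}v_i=0$ and $\sum_i x_i^{p^2+p}v_i=0$ are independent at a generic point, so $\alpha|_\Lambda\neq0$. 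The Euler identities you invoke only tell you that the \emph{radial} vector $v=(x_0,\dots,x_n)$ lies in both kernels; they say nothing about the full ordinary tangent space. What you would actually need is $d(\alpha|_\Lambda)=0$, not $\alpha|_\Lambda=0$, and establishing that closed-form condition leads you straight back to the mixed-derivative commutation that the paper uses directly.
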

\begin{proof}
Assume that $M$ is the zero locus of the homogeneous polynomials $F_1,\ldots,F_r$.
When we restrict ourselves to $\mathrm{Con}^{(h)}(M)$ we have that 
\[
\xi_j=\sum_{i=1}^r \lambda_i \left( \left. D_j^{(h)} \right|_P F_i  \right)
\qquad \lambda_i\in k,
\]
and 
\[
d\xi_j=  \sum_{i=1}^r \lambda_i  d\left. D_j^{(h)} \right|_P F_i =
\sum_{i=1}^r \lambda_i  \sum_{\nu=0}^{n}  
\left. D_\nu^{(0)} \right|_P \left. D_j^{(h)} \right|_P F_i dx_\nu.
\]
This means that the first summand of $\omega$ restricted to $\mathrm{Con}^{(h)}(X)$ has the form 
\[
\sum_{j=0}^n d^{(h)}x_j^q \wedge d\xi_j=\sum_{i=1}^r \lambda_i  \sum_{j=0}^n    \sum_{\nu=0}^{n} 
\left. D_\nu^{(0)} \right|_P \left. D_j^{(h)} \right|_P F_i  
\;\;
d^{ (h)}x_j^q\wedge dx_\nu.
\]
% The above sum is zero since
% \[
% D_\nu^{(0)} D_j^{(h)} = D_j^{(h)} D_\nu^{(0)}.
% \]
% Indeed, 
% \begin{eqnarray*}
% \sum_{j=0}^n dx_j^{(h)} \wedge d\xi_j
% &= &
%  - \sum_{i=1}^r \lambda_i  \sum_{j=0}^n 
% \left(   
%  \left. D_j^{(h)} \right|_P  \sum_{\nu=0}^{n}  \left. D_\nu^{(0)} \right|_P F_i  dx_\nu
% \right)
% \wedge
% dx_j^{ (h)}\\
% & = & 
% - \sum_{i=1}^r \lambda_i  \sum_{j=0}^n 
% \left(   
%  \left. D_j^{(h)} \right|_P  d F_i 
% \right)
% dx_j^{(h)}\\
% &= &
% -\sum_{i=1}^r \lambda_i  d^{(h)} d F_i=0.
% \end{eqnarray*}
In a similar way we have, using eq. (\ref{qraise})
% we can also prove that $\sum_{j=0}^n dx_j \wedge d\xi_j^{(h)}=0$  on $\mathrm{Con}^{(h)}(M)$, using the fact that $d_{x_i}^{(h)} \wedge d_{x_i}^{(h)}=0$. Indeed, 
\begin{align*}
\sum_{j=0}^{n} dx_j \wedge d^{(h)}\xi_j^q
&=
\sum_{j=0}^n 
dx_j\wedge 
\left(
\sum_{i=1}^{r} \lambda_i 
\sum_{\nu=0}^{n} 
\left. D_\nu^{(0)} \right|_P \left. D_j^{(h)} \right|_P F_i  
\right)^q
d^{(h)}x_\nu^q \\
&=
\sum_{j=0}^n 
\sum_{i=1}^{r} \lambda_i 
\sum_{\nu=0}^{n} 
\left. D_\nu^{(0)} \right|_P \left. D_j^{(h)} \right|_P F_i 
\;\;
dx_j \wedge d^{(h)}x_\nu^q
\end{align*}
Therefore the form 
\[
\omega=\sum_{j=0}^n d^{(h)}x_j^q \wedge d\xi_j +\sum_{j=0}^n d^{(h)}\xi_j^q \wedge dx_j
\]
 is zero on $\Lambda$.

% the commutation of the operators 
% $D_\nu^{(0)} D_j^{(h)} = D_j^{(h)} D_\nu^{(h)}$.
% On the other hand 
% $\sum_{j=0}^n dx_j \wedge d\xi_j^{(h)}=0$  on $\mathrm{Con}^{(h)}(M)$ since
% $\xi_i$ is a polynomial which does not involve powers of $q$ and
% $d^{(h)}\xi_i=\sum_{\nu}^n D_\nu^{(h)}\xi=0$. 

We now compute the dimension of $\mathrm{Con}^{(h)}(M)$.
If $P$ is an $h$-non-singular point, then  the dimension of the $h$-tangent space equals $\dim M$, therefore the dimension of the conormal space is $n-r$ and the dimension of $\mathrm{Con}^{(h)}(M)=\dim(M)+n-r=n$. 
% If $P$ is an $h$-non-singular point of $X$, then at $P$ the hypersurfaces $V(f_i)$ are 
% intersecting transversally, and this means that the linear forms $L_i^(h)$
% span a vector space of dimension $r$, while the dimension of $X$ is $n-r$.
% {\color{red} Needs the assumption of complete intersection.}

Finally, if $(x_0,\ldots,x_n,\xi_0,\ldots,\xi_n) \in \mathrm{Con}^{(h)}(M)$ then it is obvious that for $\mu,\lambda\in k^*$ the element $(\mu   x_0,\ldots,\mu x_n,\lambda \xi_0,\ldots,\lambda\xi_n)$ is  
 an element of $\mathrm{Con}^{(h)}(M)$ as well. 
\end{proof}

 \begin{definition}
 A map $f:X \rightarrow Y$ between varieties  will be called generically smooth if the induced map $f_*: T^{(0)}_PX \rightarrow T^{(0)}_{f(P)}Y$ is  surjective  for an open dense subset $U\subset X$. 
 % such that $f(U)$ is an open dense subset of $Y$. 

 Similarly we will call a map 
 $f:X \rightarrow Y$ $h$-generically smooth if the induced map $f_*: T^{(h)}_PX \rightarrow T^{(h)}_{f(P)}Y$ is  surjective  for an open dense subset $U\subset X$ such that $f(U)$ is an open dense subset of $Y$.
 \end{definition}
 \begin{remark}
 Proposition \ref{p-power-diff} implies that if $f$ is generically smooth then it is $h$-generically smooth. Also if the function field extension $k(X)/k(Y)$ is separable, then there is an open set $U$ so that $f_*$ is smooth for all points in $U$, \cite[p. 169]{Kl:86}, \cite[p.68]{MR2894984}.

 %Hida Geometric modular forms ed. 2 p. 68
 \end{remark}

 % Observe that by proposition \ref{p-power-diff} if $f$ is $h$-generically smooth and $f$ can be represented in terms of polynomial functions which do not involve terms that are  $p^h$-powers, then the natural map $T_P^{(h)}X\rightarrow T_{f(P)}^{(h)}Y$ is the $p^h$-power of the ordinary differential of $f$. 

\begin{remark}
We consider the identification $F:V \rightarrow (V^{*h})^{*h}$ given in theorem \ref{h-duality}. Define the map $\Psi: V\times V^{*h}  \rightarrow  
V^{*h} \times (V^{*h})^{*h} 
% \cong V^{*h} \times V   
$ given by sending $\Psi:(v,w) \mapsto (w,F(v))$. Notice that if $\bar{c},\bar{b}$ are the coordinates of $V^{*h}$, $(V^{*h})^{*h}$, then the
coordinates in $V\times V^{*h}$ are given by $(\bar{b},\bar{c})$, see also the diagram in eq. (\ref{PSI-map}). 
% Also if $\Lambda$ is a conical Lagrangian variety in $V \times V^{*h}$ then 
\end{remark}
The following is essential for proving reflexivity. 
\begin{proposition}
\label{Lang-char}
Let $\pi_2:   V\times V^{*h} \rightarrow V^{*h}$ be the second  projection. 
A conical Lagrangian variety  $\Lambda \subset V\times V^{*h} $ 
has projection $Z=\pi_2(\Lambda) \subset V^{*h}$. 
If the set of $h$-smooth points of $Z$ forms an non-empty dense open subset of $Z$ and the map $\pi_2:\Lambda \rightarrow Z \subset V^{*h}$ is 
  $h$-generically smooth, then 
the conormal variety $\mathrm{Con}^{(h)}(Z) \subset V^{*h} \times V$ coincides with $\Lambda$ under the natural identification $\Phi:V\times V^{*h}\rightarrow V^{*h} \times V$.
\end{proposition}
\begin{proof}
The set of $h$-smooth points of $\Lambda$ is non empty by definition \ref{def:Lagrangian}, so it is an open dense set of the irreducible variety $\Lambda$. 
The projection $\pi_2(\Lambda)=Z$ is irreducible since $\Lambda$ is irreducible. 
By assumption the map 
$\pi_2:\Lambda \rightarrow Z$ is 
generically smooth, so we can find an
  open dense set $\Lambda_0\subset \Lambda$, consisted of $h$-smooth points with the additional property that $\pi_2(\Lambda_0)=Z_0$ consists also of $h$-smooth points and moreover  the induced map  $\pi_{2,*}$ forms a surjective  map from $T^{(h)}_P\Lambda \rightarrow T^{(h)}_{\pi_2(P)}Z$.
 In selecting $\Lambda_0$ and $Z_0$ it is essential that open non-empty sets in irreducible varieties are dense. We have the following diagram:
\[
\xymatrix{
  V \times V^{*h} \ar[d]^{\pi_2} \ar@/^2.5pc/[rrrr]^{\cong \;\; \Psi}
  &   \Lambda \ar[d]^{\pi_2} \ar@{_{(}->}[l]
  \ar@/^1.5pc/[rr]_{\cong}
  & \Lambda_0 \ar@{_{(}->}[l] \ar[d]^{\pi_2} 
& \mathrm{Con}^{(h)}(Z) \ar@{^{(}->}[r] \ar[d]& V^{*h}\times V
  \\
  V^{*h} & Z \ar@{_{(}->}[l] & 
   Z_0\ar@{_{(}->}[l] \ar@{=}[r] & Z_0 & & 
}
\]
The map $\Psi$ is the map sending $(x,y) \in V\times V^{*h}$ to $(y,x)\in V^{*h}\times V$. 

We will prove first that for any $h$-smooth point $P\in Z_0\neq \emptyset$,
$\Psi(\pi_2^{-1}(P) \cap \Lambda) \subset \mathrm{Con}^{(h)}(Z)$. 
Let $\xi\in \pi_2^{-1}(P) \cap \Lambda$ having coordinates 
$
\xi=
% (c_0,\ldots,c_n,b_0,\ldots,b_n)=
(\bar{c},\bar{b}) \in V \times V^{*h}.
$
% therefore written as 
%  \[
%  \xi=\sum_{\nu=0}^n c_\nu(P) D_{x_i}^{(0)}
%  +
%  \sum_{\nu=0}^n b_\nu(P) D_{\xi_i}^{(0)} \in V \times V^{*h}
%  % \big(c_0(P),\ldots,c_n(P),b_0(P),\ldots,b_n(P)\big)=\big(\bar{c}(P),\bar{b}(P)\big).
%  \] 
Since  $V$ is a vector space we can see
$\bar{c}$ as an element in
% $\Theta^{(0)}\Theta^{(*h)}_PV^{*h}=
$T^{(0)}V$, i.e.
\[
	\bar{c}=\sum_{i=0}^n c_i D^{(0)}_{x_i}.
\]
% where $\xi_i$ $0\leq i \leq n$ are the coordinates in $\Theta^{(*h)}_PV$.
We will identify $V^{*h}=V^{*h} \times \{0\}$ with the zero section 
of the bundle (using also theorem \ref{h-duality})
\[
T^{(*h)}(V^{*h})=V^{*h} \times (V^{*h})^{*h}\cong V^{*h} \times V \rightarrow V^{*h}.
\]
% of $V^{*h}$ in $\Theta^{(*h)}V= V^{*h} \times V$. Since $P \in Z_0$ we can see the arbitrary $v\in \Theta^{(h)}_PZ$ as a  tangent vector to $\Theta^{(h)}V^{*h}$ at  $P\in V^{*h} \subset  V^{*h} \times V$, where in the last inclusion we have identified $V^{*h}$ with the zero section $V^{*h}\times \{0\}$ in $V^{*h} \times V$.
Since $\pi_2$ is $h$-generically smooth we can see every element 
 $v\in T^{(h)}_PZ$ written as 
\[
	v=\sum_{i=0}^n a_i D_{\xi_i}^{(h)},
\] 
as an element in $V^{h*}$ under the identification of $V^{*h}$ by its $h$-tangent space. 
Since $\omega$ is zero on $\Lambda$ 
we have by eq. (\ref{omega-com}) for the vector fields
\begin{align*}
\xi &= 
\sum_{i=0}^n c_i \cdot D_{x_i}^{(0)} 
+ \sum_{i=0}^n 0 \cdot D_{x_i}^{(h)}
+ \sum_{i=0}^n b_i\cdot  D_{\xi_i}^{(0)}
+ \sum_{i=0}^n 0 D_{\xi_i}^{(h)}
\\
v &= \sum_{i=0}^n 0 \cdot D_{x_i}^{(0)} 
+ \sum_{i=0}^n 0 \cdot D_{x_i}^{(h)}
+ \sum_{i=0}^n 0\cdot  D_{\xi_i}^{(0)}
+ \sum_{i=0}^n a_i D_{\xi_i}^{(h)}
\end{align*}
\[
	0=\omega(\xi,v)=\sum_{i=0}^{n}a_i^{q}c_i, 
\]
that is the $q$-linear form 
\[
(x_0,\ldots,x_n) \mapsto \sum_{i=0}^n x_i^q c_i,
\]
 vanishes on the element of the tangent space with  coordinates $\bar{a}$. 
The last equation implies that we can see $\Psi(\xi)=(\bar{b}(P),\bar{c}(P))\in V^{*h}\times V$ as an element in $\mathrm{Con}^{(h)}Z_0 \subset V^{*h}\times V$, so
$\Psi(\pi^{-1}(P) \cap \Lambda) \subset \mathrm{Con}^{(h)}(Z)$, notice that the coordinates of $\bar{P}$ satisfy by definition the defining equations of $Z$. 

 So we have $\Psi(\pi^{-1}(P) \cap \Lambda_0) \subset \mathrm{Con}^{(h)}(Z)$ and $\Psi(\Lambda_0)$ is a dense subset of the same dimmension of the irreducible variety $\mathrm{Con}^{(h)}(Z)$ so $\Psi(\Lambda)=\mathrm{Con}^{(h)}(Z)$.
%{\color{red} For this we need that $Z$ is a (local) complete intersection?}
\end{proof}
\begin{theorem}[Reflexivity]
Let $M\in \mathbb{P}(V)$ be an irreducible, reduced projective variety 
generated by $h$-homogeneous elements, which also has a non-empty $h$-non-singular locus. 
%{\color{red} (local complete intersection?)}.
Assume that $Z:=\pi_2(\mathrm{Con}^{(h)}(M))$ has a nonempty open set of $h$-non-singular points and that the map 
 \[
 \pi_2:V\times V^{*h} \supset \mathrm{Con}^{(h)}(M) \rightarrow 
 \pi_2(\mathrm{Con}^{(h)}(M)):=Z \subset V^{*h}
 \] is 
  generically smooth. 
 Then 
\[
\Psi(\mathrm{Con}^{(h)}(M))=\mathrm{Con}^{(h)}(Z) \subset V^{*h} \times (V^{*h})^{*h}=V^{*h}\times V.
\]
\end{theorem}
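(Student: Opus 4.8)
The plan is to deduce the theorem directly from Proposition \ref{Lang-char}: it suffices to verify that the variety $\Lambda:=\mathrm{Con}^{(h)}(M)\subset V\times V^{*h}$ is conical Lagrangian and that the second projection $\pi_2:\Lambda\to Z=\pi_2(\Lambda)$ is generically smooth. Once both are established, Proposition \ref{Lang-char} identifies $\Lambda$ with $\mathrm{Con}^{(h)}(Z)$ inside $V^{*h}\times (V^{*h})^{*h}=V^{*h}\times V$, which is exactly the asserted equality; the isomorphism realizing it is the map $(x,y)\mapsto (y,F(x))$ with $F$ the canonical isomorphism of Theorem \ref{h-duality}, just as in the hypersurface Theorem \ref{dual-hyper}.

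The first point I would treat exactly as in the lemma establishing that $\mathrm{Con}^{(h)}(M)$ is Lagrangian. The vanishing of $\omega=\sum_j dx_j^{(h)}\wedge d\xi_j+\sum_j dx_j\wedge d\xi_j^{(h)}$ on $\Lambda$ rests only on the commutation relations $D_\nu^{(0)}D_j^{(h)}=D_j^{(h)}D_\nu^{(0)}$ of the Hasse derivatives and on the identity $d^{(h)}df_i=0$; the dimension count $\dim\Lambda=n$ uses precisely the hypothesis that $M$ has a non-empty $h$-nonsingular locus (Definition \ref{h non singular}), so that the generic fibre of $\pi_1:\Lambda\to M$ has the expected dimension; and conicality in the $\xi$-variables is immediate. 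Note that this step does not itself require separability, so it may be carried out independently of the harder point below.

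The hard part is the generic smoothness of $\pi_2$, which is the $h$-analogue of the Monge--Segre--Wallace separability condition and is really the heart of the theorem. By Remark \ref{rem-h-gen} it is enough to produce one dense open subset on which the induced map of ordinary ($0$-)tangent spaces is surjective, equivalently on which $k(\Lambda)/k(Z)$ is separable. My plan is to reduce this to the already-established hypersurface case: using \cite[chap.~I, sec.~4.3, th.~1.8]{ShafaBo1} choose a birational model $\Phi:M\rightarrow Y$ with $Y$ a hypersurface, and observe that the non-empty $h$-nonsingular locus of $M$ is exactly the hypothesis that rules out the degenerate behaviour flagged in the remark after Definition \ref{h non singular}, so that $Y$ may be taken with non-empty $h$-nonsingular locus. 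On the open sets where $\Phi$ restricts to an isomorphism the $h$-tangent, hence $h$-conormal, data correspond, so Theorem \ref{dual-hyper} --- whose proof exhibits the $q$-Legendre transform $G$ as an explicit inverse $k$-function to the $h$-Gauss map, forcing $\pi_2$ to be birational onto its image --- transports back to give generic smoothness of $\pi_2$ for $M$.

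Alternatively, and more intrinsically, one can argue that $h$-homogeneity itself removes the inseparability: whereas the ordinary gradient of an inseparable Gauss map is a system of $p^h$-th powers $\partial f/\partial x_i=g_i^{p^h}$, the generalized Euler relation $\sum_i x_i^{q}D_{x_i}^{(h)}f=\deg_h(f)\cdot f$ of the Proposition preceding Definition \ref{h-homogeneous} shows that the $h$-gradients $\nabla^{(h)}f_i$ genuinely involve the $x$-variables and are not $p$-th powers, so that the $h$-Gauss map is separable. Either way, the real obstacle --- and the place where the hypotheses \emph{$h$-homogeneous} and \emph{$h$-nonsingular} are indispensable --- is guaranteeing that the relevant ``$h$-Hessian'' Jacobian is generically non-degenerate, so that Wallace's implicit--inverse function theorem applies and the inverse $k$-function, hence separability, exists; for non-hypersurfaces this non-degeneracy is the one point that must be checked with care. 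With $\pi_2$ shown generically smooth, Proposition \ref{Lang-char} finishes the proof.
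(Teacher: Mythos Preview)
Your high-level plan coincides with the paper's: show that $\Lambda=\mathrm{Con}^{(h)}(M)$ is conical Lagrangian (this is the preceding Lemma) and then invoke Proposition~\ref{Lang-char}. In fact the paper's proof is strictly shorter than your outline: it simply remarks that $\mathrm{Con}^{(h)}(M)$, viewed inside $V^{*h}\times V\cong V^{*h}\times(V^{*h})^{*h}$, is again Lagrangian of dimension $n$ ``by symmetry'', and then applies Proposition~\ref{Lang-char} without any further discussion. Notably, the paper does \emph{not} supply a separate argument for the generic smoothness of $\pi_2$; that clause appears in the statement but is not justified in the text. So the point you flag as ``the hard part'' is one the paper leaves open (or regards as implicit in the Lemma's standing hypothesis ``Assume that $h$ is selected such that $\pi_2$ is separable'').

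Your attempt to fill this gap is therefore going beyond the paper, but your first approach has a genuine problem. The conormal variety $\mathrm{Con}^{(h)}(M)\subset V\times V^{*h}$ is an \emph{extrinsic} object: it depends on the embedding $M\hookrightarrow\mathbb{P}(V)$, not just on the isomorphism class of $M$. A birational map $\Phi:M\dashrightarrow Y$ to a hypersurface in some other projective space gives no a priori identification between $\mathrm{Con}^{(h)}(M)$ and $\mathrm{Con}^{(h)}(Y)$, nor between the corresponding second projections; the Corollary after Lemma~\ref{cotmax} tells you only that the \emph{dimension} of the $h$-tangent space is a birational invariant, not that the conormal data transports. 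So the separability of $\pi_2$ for $Y$ established in Theorem~\ref{dual-hyper} does not obviously pull back to $M$. Your second, ``intrinsic'' sketch is closer in spirit to what the hypotheses ($h$-homogeneous, non-empty $h$-nonsingular locus) are meant to buy, but as written it is a heuristic rather than a proof: the generalized Euler identity controls $\sum_i x_i^q D_{x_i}^{(h)}f$, not the Jacobian of the map $P\mapsto(\nabla^{(h)}f_1,\ldots,\nabla^{(h)}f_r)$, and showing that the latter is generically of full rank over $k(Z)$ is precisely the step that needs an argument.
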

\begin{proof}
The conormal variety $\mathrm{Con}^{(h)}(M)$ which is originaly defined as a subset of $V \times V^{*h}$ can be also seen through $\Psi$ as a subset of $V^{*h}\times V\cong V^{*h}\times (V^{*h})^{*h}$ and by symmetry it is  still Lagrangian of dimension $n$.

Let us now prove that the map $\pi_2$ is $h$-generically smooth. Let $f:X\rightarrow Y$ be a map and suppose
  that $P$ is a smooth point of $X$ and $f(P)$ is a smooth point of $Y$ and $f_* T_PX \rightarrow T_{f(P)}Y$ is surjective. If $X,Y$ are irreducible then such a point $P$ exists since the space of smooth points is a non-empty open dense subset and $f$ is generically smooth. 

Set $\Lambda=\mathrm{Con}^{(h)}(M)$. The map $\pi_2:V\times V^{*h} \supset \mathrm{Con}^{(h)}(M) \rightarrow \pi_2(M):=Z \subset V^{*h}$ is 
 assumed to be 
  generically smooth 
  %and does not involve powers of the characteristic in its definition, therefore 
  and by proposition \ref{p-power-diff} the natural map 
  $T^{(h)}_P \Lambda \rightarrow T^{(h)}_{\pi_2(P)} Z$ is the $p^h$-power of the classical differential $d\pi_2:T^{(0)}_P \Lambda \rightarrow T^{(0)}_{\pi_2(P)} Z$.
% Let us now prove that the set of $h$-non-singular points on $Z$ is non-empty.
% By \cite[chap. I,  sec. 4.3 th. 1.8]{ShafaBo1} an affine open of $U\subset Z$ is birational to  an hypersurface $X=V(f)$. Let $\phi$ be this birational isomorphism. The set of $h$-singular points of $X$ is empty if and only if the $h$-degree part of $f$ is zero, that is all h-Hasse derivatives are zero.    

There is an open dense set $U$ of $\Lambda$ such that for every $P\in U$
we have $n=\dim \Lambda=T^{(0)}_P \Lambda$  (is a classical non-singular point) and $\dim \mathrm{Im} (d\pi_2)=\dim T_{\pi_2(P)} Z=\dim Z$ (surjective differential and $\pi_2(P)$ is classical non-singular) and moreover that $\pi_2(P)$ is $h$-non-singular point of $Z$. Notice that non-empty sets of irreducible varieties are dense and hence have nonempty intersection. 
But $\dim \ker (d \pi_2(P))= \dim \ker (d \pi_2^{p^h}(P))$ hence we obtain    
  \begin{align*}
n &=\dim \ker (d\pi_2(P)) + \dim \mathrm{Im} (d\pi_2(P)) \\
&=\dim \ker (d\pi_2(P))^{p^h} + \dim \mathrm{Im} (d\pi_2(P))^{p^h}.
  \end{align*}
The assumption that $\pi_2(P)$ is $h$-non-singular gives us that $\dim T^{(h)}_{\pi_2(P)}Z=\dim Z$.
On the other hand
$\dim \mathrm{Im} (d\pi_2(P))^{p^h}=\dim Z$
and since 
 $ \mathrm{Im} (d\pi_2(P)^{p^h}) \subset
T_{\pi_2(P)}^{(h)} Z $ if $\pi_2(P)$ is $h$-non-singular, that is $\dim T_{\pi_2(P)}^{(h)} Z=\dim Z $ we finally have that  $d \pi_2(P)^{p^h}$
is surjective. Since we proved that $\pi_2$ is generically smooth reflexivity  follows by theorem \ref{Lang-char}.

 % If moreover $P$ is an $h$-smooth point of $X$, that is $\dim T_P^{(h)}X=\dim X$, then
 %  $f_*^{p^h}: T_P^{(h)}X\rightarrow T_P^{(h)}{f(P)}$ is surjective if and only if  $f(P)$ is an $h$-smooth point of $Y$. 
 % Indeed, we have 
 %  \[
 %  \dim \mathrm{Im}(f_*)^{p^h}
 %   T_P^{(h)}X=\dim T_f(P)X
 %   \] 

% {\color{red} We need here a definition of $h$-generically smooth}

% {\color{blue} WRONG!
% \begin{remark} \label{rem-h-gen}
% If the map is generically smooth then the induced maps  $f^{*h}: \Theta^{(h)}_PX \rightarrow \Theta^{(h)}_{f(P)}Y$ are  surjective  for an open dense subset $U_h\subset X$ such that $f(U_h)$ is an open dense subset of $Y$, for all $h\in \mathbb{N}$. 
% \end{remark}
% \begin{proof}
% We know that the map $f$ is generically smooth 
% % if it does not involve the Frobenius map (elaborate) 
% if it induces by duality   an injection   on the cotangent spaces 
% \[
% \frac{m_{P}}{m_{P}^2} \rightarrow \frac{m_{f(P)}}{m_{f(P)}^2}
% \]
% for $P$ in  an open set $U \subset X$. But then the induced map on powers 
%  \[
% \frac{m_{P}^{p^i}}{m_{P}^{p^i+1}} \rightarrow \frac{m_{f(P)}^{p^i}}{m_{f(P)}^{p^i+1}}
% \]
% is also an injection. 
% \end{proof} 
% }

\end{proof}

% {\color{blue} {\bf We have seen this!}
% \begin{lemma}[Chain rule]
% We have that $D_{x_i}(\xi^{(h)}_j)=D_{x_i^{(h)}}(\xi_j)=0$.
% Also 
% \begin{align*}
% D_{\xi_i^{(h)} F}
% &=
% \sum_{\nu=0}^n D_{x_\nu} F \cdot 
% D_{\xi_i^{(h)}}(x_\nu)
% +
% \sum_{\nu=0}^n D_{x_\nu^{(h)}} F \cdot D_{\xi_i^{(h)}}(x_\nu^{(h)})
% \\
% &=
% \sum_{\nu=0}^n D_{x_\nu^{(h)}} F \cdot D_{\xi_i}(x_\nu)^{p^h}
% \end{align*}
% \end{lemma}
% }

Let $M\subset \mathbb{P}(V)$ be an irreducible, reduced projective variety. We can form the connical $h$-Lagrangian $\mathrm{Con}^{(h)}M \subset V\times V^{*h}$ which has a nonempty open set of $h$-non-singular points and also form the $h$-dual variety $Z=\pi_2(\Lambda)$, where $\pi_2:V\times V^{*h} \rightarrow V^{*h}$ is the second projection. The set $Z$ is irreducible but determining whether  the set of $h$-non-singular points is non-empty is a subtle problem. Irreducible algebraic sets are known to have open dense sets of classical non-singular points.  For proving a reflexivity theorem we need the set of $h$-non-singular points of $Z$ to be nonempty, hence dense subset of $Z$. When $M$ is a hypersurface we have given conditions in lemma \ref{InvertibleHess} so that $Z$ has non-empty set of $h$-non-singular points. 
The condition of $h$-non-singular points requires a computation of the dimension of the algebraic set. Understanding the dimension of the dual variety $Z$ is a subtle task,  see \cite{MR1247282}, \cite{MR555696}, \cite{MR0354657}, \cite{MR0568897}, \cite[sec. 2.5]{MR2014407}.
Let us treat here the following case
\begin{proposition}
\label{complete-inter}
Let $M$ be  a complete intersection described as the zero locus of $r$ polynomials $F_1,\ldots,F_r$ and $\dim M=n-r$, such that all Hasse derivatives $D_{x_j}^{(h)} F_i$ have degree prime to the characteristic. Then the dual variety is a hypersurface. If moreover all Hasse derivatives $D_{x_j}^{(h)} F_i$
have zero $h$-derivatives for all $i=0,\ldots,n$ and $1\leq j \leq r$ then the dual hypersurface has  non-empty $h$-singular locus.
% {\color{red} Moreover the projection map $\pi_2:\mathrm{Con}^{(h)}(M)\rightarrow Z$, is $h$-generically smooth.} 
\end{proposition}
\begin{proof}
In this case we can prove that $Z$ has dimension $n-1$ since the coordinates $(\xi_0,\ldots,\xi_n)$ are given by 
\begin{equation}
\label{xi-complete}
\begin{pmatrix}
\xi_0 \\
\vdots \\
\xi_n
\end{pmatrix}
=
\sum_{i=1}^r \lambda_i 
\begin{pmatrix}
D_{x_0}^{(h)} F_i \\
\vdots \\
D_{x_n}^{(h)} F_i
\end{pmatrix}.
\end{equation} 
We now compute the $(n+1) \times (n+1)$-matrix
\begin{equation}
\label{derivaxi}
\begin{pmatrix}
D_{x_0}^{(0)} \xi_0 & \cdots & D_{x_0}^{(0)} \xi_n
\\
\vdots & & \vdots 
\\
D_{x_n}^{(0)} \xi_0 & \cdots & D_{x_n}^{(0)} \xi_n
\end{pmatrix}=
\sum_{i=1}^r 
\lambda_i 
\begin{pmatrix}
D_{x_0}^{(0)} D_{x_0}^{(h)} F_i & \cdots & D_{x_0}^{(0)} D_{x_n}^{(h)} F_i
\\
\vdots & & \vdots 
\\
D_{x_n}^{(0)} D_{x_0}^{(h)} F_i & \cdots & 
D_{x_n}^{(0)} D_{x_n}^{(h)} F_i
\end{pmatrix}.
\end{equation}
If the elements $F_i$  have at least a derivative
$D_{x_\mu}^{(h)}(F_i)$ which is not zero, and degrees $d_{\mu,i}$ which are prime to $p$, then by lemma \ref{InvertibleHess} we obtain that each matrix summand in the right hand side of eq. (\ref{derivaxi}) is generically invertible. 
Without loss of generality we can assume that for $\lambda_1=1,\lambda_2=\cdots=\lambda_r=0$ the matrix in eq. (\ref{derivaxi}) is invertible (change projective coordinates in the projective space $\mathbb{P}^r$ if not.) In this case the subvariety $Z_\lambda$ of the projection $Z$ cut out by equations $\lambda_2=\cdots=\lambda_r$ is locally isomorphic to our original variety $M$ by using Wallace inverse function construction, which allows as to express $(x_0,\ldots,x_n)$ in terms of $(\xi_0,\ldots,\xi_n)$. The dual variety is then ruled in projective spaces with base $Z_\lambda$ and has dimension equal to 
\[
\dim Z_\lambda + r-1= \dim M+r-1=n-r+r-1=n-1.
\]

This means that $Z$ is a hypersurface defined as the zero locus of the polynomial $G(\xi_0,\ldots,\xi_n)$. 
If $Z$ has empty set of $h$-non-singular points,  then all Hasse derivatives $D^{(h)}_{\xi_\nu} G=0$ for $0\leq \nu \leq n$ and this means that $G$ has degree smaller than $q$. Using eq. (\ref{xi-complete}) we can write 
$G$ as a function of $x_0,\ldots,x_n$, depending on $\lambda_1,\ldots,\lambda_r$. Then $G$ is zero on $M$ and this means that
 $G\big(
 \xi_0(x_0,\ldots,x_n),\ldots, \xi_n(x_0,\ldots,x_n) \big)$ is in the ideal generated by  $F_1,\ldots,F_r$.  Let us write 
 \begin{equation}
 \label{member}
G\big(
 \xi_0(\bar{x}),\ldots, \xi_n(\bar{x}) \big)=
 \sum_{i=1}^r g_i(\bar{x}) F_i(\bar{x}). 
 \end{equation}
 The chain rule gives us that (recall we assumed that $D_{\xi_\nu}^{(h)}G=0$ for $0\leq \nu \leq n$)
 \[
\left(
D_{x_0}^{(0)}G,\ldots,D_{x_n}^{(0)}G,
D_{x_0}^{(h)}G,\ldots,D_{x_n}^{(h)}G
\right)=
\left(
D_{\xi_0}^{(0)}G,\ldots,D_{\xi_n}^{(0)}G,
0,\ldots,0
\right)
\begin{pmatrix}[c|c]
J & J' \\
\hline
0 & J^{p^h}
\end{pmatrix}
\]
where
\[
J=
\begin{pmatrix}
D_{x_0}^{(0)}\xi_0 & \cdots & D_{x_n}^{(0)}\xi_0
\\
\vdots &  & \vdots 
\\
D_{x_0}^{(0)}\xi_n & \cdots & D_{x_n}^{(0)}\xi_n
\end{pmatrix}
\text{ and }
J'=
\begin{pmatrix}
D_{x_0}^{(h)}\xi_0 & \cdots & D_{x_n}^{(h)}\xi_0
\\
\vdots &  & \vdots 
\\
D_{x_0}^{(h)}\xi_n & \cdots & D_{x_n}^{(h)}\xi_n
\end{pmatrix}
\]
Therefore if $D_{x_i}^{(h)}\xi_j=0$ for all $0\leq i,j \leq n$ then 
\[
(D_{x_0}^{(h)}G,\ldots,D_{x_n}^{(h)}G)=(0,\ldots,0). 
\]
In this case we have by eq. (\ref{member}) 
\[
\left(
D_{x_0}^{(h)}G(P),\ldots,D_{x_n}^{(h)}G(P)
\right)
=
\sum_{i=1}^r g_i(P) 
\left(
D_{x_0}^{(h)}F_i(P),\ldots,D_{x_n}^{(h)}F_i(P)
\right).
\]
But the vectors 
\[
\nabla^{(h)}F_i=
\left(
D_{x_0}^{(h)}F_i(P),\ldots,D_{x_n}^{(h)}F_i(P)
\right)
\]
are linear independent for every point $P$ in the non-empty set $U$ containing all $h$-non-singular points. This means that for all $P\in U$ $g_i(P)=0$ for $1\leq i \leq r$, which in turn implies that $g_i$ are zero polynomials and $G$ is also zero, a contradiction.

\end{proof}

\subsection{Examples}
Consider the complete intersection in $\mathbb{P}^{n}_k$ given by ($\bar{\lambda}=(\lambda_{1},\ldots,\lambda_{n-2})$)
\begin{equation} \label{defineGFC}
C^k(\bar{\lambda}):=\left \{ \begin{array}{rcc}
              x_0^k+x_1^k+x_2^k&=&0\\
              \lambda_1x_0^k+x_1^k+x_3^k&=&0\\
              \vdots\hspace{1cm} &\vdots &\vdots\\
              \lambda_{n-2}x_0^k+x_1^k+x_n^k&=&0\\
             \end{array}\right \}\subset {\mathbb P}^n_k.
\end{equation}
These curves are called ``generalized Fermat curves'', see \cite{HiRuKoPa}.
We consider the matrix of  $\nabla f_i$ written as rows,
\begin{equation} \label{Jac}
\begin{pmatrix}
k x_0^{k-1} & k x_1^{k-1} &  k x_2^{k-1} & 0& \ldots & 0 \\
\lambda_1 k x_0^{k-1} & k x_1^{k-1} & 0 & k x_3^{k-1}& \ldots &  0 \\
 \vdots      & \vdots    & \vdots  & \vdots  & \vdots \\
 \lambda_{n-2 } k x_0^{k-1} & k x_1^{k-1} & 0 & \ldots &  0 &
  kx_{n}^{k-1}
\end{pmatrix}.
\end{equation}
The conormal space is the subspace in $V^*$ of linear forms spanned by the linear forms 
\[
L_i=\sum_{\nu=0}^n D_{x_i}^{(0)} f_i X_i.
\]
Consider an arbitrary element in the span of $L_i$, $\mu_0,\ldots,\mu_{n-2}\in k$:
\begin{equation} \label{y-ord-def}
\begin{pmatrix}
y_0 \\ y_1 \\ y_2 \\ \vdots \\ y_{n} 
\end{pmatrix}
=k
\begin{pmatrix}
\sum_{\nu=0}^{n-2} \mu_\nu \lambda_\nu x_0^{k-1} \\
\sum_{\nu=0}^{n-2} \mu_\nu x_1^{k-1} \\
\mu_0 x_2^{k-1} \\
\vdots
\\
\mu_{n-2} x_n^{k-1}
\end{pmatrix}.
\end{equation}
The ordinary conormal space is given by
\[
\mathrm{Con}(C^k(\bar{\lambda}))=
\left\{ 
(x_0,\ldots,x_n, y_0,\ldots,y_n): 
\begin{array}{l}
\text{ where } x_0,\ldots,x_n \text{ satisfy eq. (\ref{defineGFC})} \\
\text{ 
and $y_0,\ldots, y_n$ \text{eq. (\ref{y-ord-def})}
}
\end{array}   
\right\}. 
\]
The image of the projection $\pi_2$ is a  codimensional $1$ subvariety, hence a hypersurface given by a single polynomial $F(y_0,\ldots,y_n)=0$.
Finding this polynomial $F$ explicitly is a complicated task in this case.
If $p \mid k-1$ it is clear by equation (\ref{y-ord-def}) that $y_i$ are given as polynomials of $x_i^p$ and the map $\pi_2$ can not be separable, hence reflexivity fails. 

Let us study the conormal space of the dual variety $Z=\pi_2(\mathrm{Con}(C^k{\bar{\lambda}}))$. We see equations (\ref{y-ord-def}) as parametric equations with parameters $\mu_0,\ldots,\mu_{n-2}$. In this case we have that the tangent space is generated by the vectors
\[
V_i:=
\left(
\frac{\partial y_i}{\partial \mu_0}, 
\frac{\partial y_i}{\partial \mu_1},
\ldots,
\frac{\partial y_i}{\partial \mu_{n-2}}
\right)
=\left(
\lambda_i x_0^{k-1},x_1^{k-1},0,\ldots,0,x_i^{k-1},0,\ldots,0
\right) \text{ for } 0\leq i \leq n-2, 
\]
which are subject to the additional condition 
\begin{equation} \label{compa-hyp}
\nabla F \bot V_i \text{  i.e. } \langle \nabla F,V_i\rangle=0.
\end{equation} 

In order to study further eq. (\ref{compa-hyp}) we consider the following cases:

$\bullet$ If $(k-1,p)=1$ then we obtain:
\begin{eqnarray} \label{solve-x}
x_0 & = & 
\left( \frac{y_0}{k\sum_{\nu=0}^{n-2} \mu_\nu \lambda_\nu} \right)^{\frac{1}{k-1}} \\
x_1 & =& 
\left(
\frac{y_1}{k \sum_{\nu=0}^{n-2} \mu_\nu}
\right)^{\frac{1}{k-1}} \nonumber \\
x_i & = & \left( \frac{y_i}{k \mu_{i-2}}\right)^{\frac{1}{k-1}} \qquad \text{for } 2 \leq i \leq n-2. \nonumber
\end{eqnarray}
This way we obtain a relative curve $X\rightarrow \mathbb{P}^{n-1}_k$, where $[\mu_0:\cdots:\mu_{n-2}]$ serve as projective coordinates of $\mathbb{P}^{n-1}_k$. The precise equations in terms of algebraic functions are given by:
\[G_i=
\lambda_i 
\left( \frac{y_0}{k\sum_{\nu=0}^{n-2} \mu_\nu \lambda_\nu} \right)^{\frac{k}{k-1}}
+
\left(
\frac{y_1}{k \sum_{\nu=0}^{n-2} \mu_\nu}
\right)^{\frac{k}{k-1}}
+
\left( 
\frac{y_{i+2}}{k \mu_{i}}
\right)^{\frac{k}{k-1}}=0 \qquad \text{ for } 
0 \leq i \leq n-2. 
\]
The polynomial $F$ can be computed by eliminating $\mu_0,\ldots,\mu_{n-2}$ from the system of the $G_i$. 
We compute (over the open set $\mu_0\mu_1\cdots\mu_{n-2}\neq  0$)
\begin{eqnarray*}
\nabla G_i 
&= &
\frac{k}{k-1}
\left(
\lambda_i   
\left( \frac{y_0}{k\sum_{\nu=0}^{n-2} \mu_\nu \lambda_\nu} \right)^{\frac{1}{k-1}}, 
\left(
\frac{y_1}{k \sum_{\nu=0}^{n-2} \mu_\nu}
\right)^{\frac{1}{k-1}},
\ldots,
\left( 
\frac{y_{i+2}}{k \mu_{i}}
\right)^{\frac{1}{k-1}},
\ldots,
0
\right) 
\\
& =& 
\frac{k}{k-1}
(\lambda_i x_0,x_1,0\ldots,0,x_i,0,\ldots,0).
\end{eqnarray*}
Therefore, the compatibility condition given in eq. (\ref{compa-hyp}) can be replaced by the conditions:
\begin{equation} \label{cond-or}
V_i \bot \nabla G_j \text{ i.e.} \langle V_i, \nabla G_j \rangle=0 \text{ for all } 0\leq i,j \leq n-2.
\end{equation}
We can now confirm that the conditions given in (\ref{cond-or}) are equivalent to the original defining equations for our curve. 
It is clear now that the vector $(x_0,x_1,\ldots,x_n)$ is normal to every generator of the tangent space of the dual variety $Z$ hence 
\[
\mathrm{Con}(Z)=\{(y_0,\ldots,y_n,x_0,\ldots,x_n): F(y_0,\ldots,y_n)=0\}
=\mathrm{Con}(C^k(\bar{\lambda})).
\]
In our computation  it was essential that we were able to express  $x_i$ for $0\leq i \leq n-2$ in terms of $y_i$ for $0\leq i \leq n-2$ in equations (\ref{solve-x}). This could not be done if $p \mid k-1$. We now proceed to the extreme case $k-1$ is a power of $p$.

$\bullet$ Assume that $k=q+1$ for $q=p^h$. Then instead of the matrix given in eq. (\ref{Jac}) we consider the matrix of $\nabla^{(h)}f_i$ given as
\begin{equation} \label{Jach}
\begin{pmatrix}
 x_0 &  x_1 &  x_2 & 0& \ldots & 0 \\
\lambda_1  x_0 &  x_1  & 0 &  x_3& \ldots &  0 \\
 \vdots      & \vdots    & \vdots  & \vdots  & \vdots \\
 \lambda_{n-2 }  x_0 &  x_1 & 0 & \ldots &  0 &
  x_{n}
\end{pmatrix}.
\end{equation}
And now 
\begin{equation}
\label{yh-param}
\begin{pmatrix}
y_0^{(h)} \\ y_1^{(h)} \\ y_2^{(h)} \\ \vdots \\ y_{n}^{(h)} 
\end{pmatrix}
=k
\begin{pmatrix}
\sum_{\nu=0}^{n-2} \mu_\nu \lambda_\nu x_0 \\
\sum_{\nu=0}^{n-2} \mu_\nu x_1 \\
\mu_0 x_2 \\
\vdots
\\
\mu_{n-2} x_n
\end{pmatrix}.
\end{equation}
The relations among elements $y_0^{(h)},\ldots,y_n^{(h)}$ are given by:
\[G_i^{(h)}=
\lambda_i 
\left( \frac{y_0^{(h)}}{\sum_{\nu=0}^{n-2} \mu_\nu \lambda_\nu} \right)^{q+1}
+
\left(
\frac{y_1^{(h)}}{\sum_{\nu=0}^{n-2} \mu_\nu}
\right)^{q+1}
+
\left( 
\frac{y_{i+2}^{(h)}}{ \mu_{i}}
\right)^{q+1}=0 \qquad \text{ for } 
0 \leq i \leq n-2. 
\]
The $h$-conormal space is given by
\[
\mathrm{Con}^{(h)}(C^k(\bar{\lambda}))=
\left\{ 
(x_0,\ldots,x_n, y_0^{(h)},\ldots,y_n^{(h)}): \text{ where }
\begin{array}{c}
x_0,\ldots,x_n 
\text{ 
satisfy eq. (\ref{defineGFC}) 
} 
\\
\text{and } y^{(h)}_0,\ldots, y_n^{(h)} \text{eq. (\ref{Jach})}
\end{array} 
\right\}.
\]
The variety $Z^{(h)}=\pi_2(\mathrm{Con}^{(h)}(C^k(\bar{\lambda})))$ is given by a hypersurface $F^{(h)}(y_0^{(h)},\ldots,y_n^{(h)})=0$, which can be computed by eliminating $\mu_0,\ldots,\mu_{n-2}$ from the system of $G_i^{(h)}$. Similarly we can compute
\begin{eqnarray*}
\nabla^{(h)} G_i^{(h)} 
&= &
\left(
\lambda_i   
 \frac{y_0^{(h)}}{k\sum_{\nu=0}^{n-2} \mu_\nu \lambda_\nu},
\frac{y_1^{(h)}}{k \sum_{\nu=0}^{n-2} \mu_\nu},
\ldots,
\frac{y_{i+2}^{(h)}}{k \mu_{i}},
\ldots,
0
\right) 
\\
& =& 
(\lambda_i x_0,x_1,0\ldots,0,x_i,0,\ldots,0).
\end{eqnarray*}
Again we see equations (\ref{yh-param}) as parametric equations with parameters $\mu_0,\ldots,\mu_{n-2}$. The tangent space is generated by the vectors
\[
V_i^{(h)}:=
\left(
\frac{\partial y_i}{\partial \mu_0}, 
\frac{\partial y_i}{\partial \mu_1},
\ldots,
\frac{\partial y_i}{\partial \mu_{n-2}}
\right)
=\left(
\lambda_i x_0^{k-1},x_1^{k-1},0,\ldots,0,x_i^{k-1},0,\ldots,0
\right) \text{ for } 0\leq i \leq n-2, 
\]
which are subject to the additional condition 
\begin{equation} 
V_i^{(h)} \bot \nabla G_j^{(h)} \text{ i.e.} \langle V_i^{(h)}, \nabla G_j^{(h)} \rangle=0 \text{ for all } 0\leq i,j \leq n-2.
% \nabla F^{(h)} \bot V_i^{(h)} \text{  i.e. } \langle \nabla F^{(h)},V_i^{(h)}\rangle=0.
\end{equation} 
As in the zero characteristic case the last conditions are equivalent to the defining equations of the curve. 

 \def\cprime{$'$}

\end{document}